\documentclass[11pt]{article}
\usepackage[utf8]{inputenc}
\usepackage{amsmath, amsfonts, amssymb, amsthm}
\usepackage{geometry}
\usepackage{bm}
\usepackage{booktabs}
\usepackage{longtable}
\usepackage{hyperref}
\usepackage{graphicx}

\newtheorem{theorem}{Theorem}
\newtheorem{lemma}[theorem]{Lemma}
\newtheorem{proposition}[theorem]{Proposition}
\newtheorem{definition}{Definition}
\newtheorem{remark}{Remark}

\newtheorem{corollary}{Corollary}
\newtheorem{problem}{Problem}
\newtheorem{example}{Example}

\begin{document}

\title{Information Design under Uncertain Utilities: Probabilistic and CVaR Approaches}
\author{Furkan Sezer\thanks{Texas A\&M University.}}
\date{\today}

\maketitle
\begin{abstract}
This paper studies information design when the designer lacks precise knowledge of agents' payoff coefficients. The Calibrated Bayes Correlated Equilibrium (Cal-BCE) is introduced as a solution concept that augments the Bayes correlated equilibrium with a corrector policy preserving incentive compatibility under the designer's structural uncertainty, adapting its revelation principle to this setting. The design problem is nonconvex in general, but under a linear-quadratic-Gaussian structure it admits convex second-order cone and semidefinite reformulations under two-sided probabilistic and conditional value-at-risk (CVaR) constraints, with feasibility guaranteed by a Hadamard invertibility condition. A joint decentralization theorem shows that both designs cap cross-agent action covariances, the CVaR design more tightly at a common tolerance; but because the formulations operate at design-specific feasibility thresholds, the realized ordering is calibration-dependent. Experiments on fifteen sector ETFs confirm the trade-off: the probabilistic design attains higher mean welfare and the CVaR design better tail protection, with neither dominating outright.
\end{abstract}


\section{Introduction}

\emph{Information design} studies how a designer can influence equilibrium outcomes in \emph{incomplete-information games} by controlling the distribution of signals about underlying payoff states.  
The designer commits to a rule that maps payoff-relevant states into signals observed by agents before the realization of the states, thereby shaping their posterior beliefs, best responses, and the resulting equilibrium.  
As formalized in \cite{Bergemann2019}, the information-design problem can be formulated as an optimization problem in which the designer selects a feasible signal distribution consistent with a common prior to optimize an objective function subject to equilibrium constraints among agents.  
This framework complements classical mechanism design by focusing on the choice of information rather than incentive schemes, and provides a foundation for analyzing how information policies affect welfare, efficiency, and systemic stability across strategic environments.

Information design has been extensively studied in market and auction contexts, where disclosure of value information changes bidding incentives, efficiency, and revenue outcomes. In second-price auctions, the revenue-maximizing seller fully reveals low valuations but pools high ones to balance efficiency and information rents \cite{Bergemann2022AERInsights}. Related work provides closed-form characterizations of buyer-optimal and seller-worst information structures in Myerson auctions \cite{ChenYang2023JET}, demonstrates that randomization over information structures can enable full surplus extraction \cite{Krahmer2020JET}, and analyzes how correlated information affects equilibrium bidding and revenue in first-price auctions \cite{Bergemann2017Ecta}.  
These results establish that optimal information disclosure balances allocative efficiency with incentive rents, a principle that recurs across broader economic and financial environments.

Beyond markets and auctions, information design methods have been applied to a range of settings characterized by strategic interdependence and uncertainty. On the technical side, constrained \cite{doval2024Constrained}, multiple-designer \cite{Koessler2022Interactive} and privately-informed-agent \cite{candogan2023private} extensions of information design are studied. In congestion games, appropriately garbled signals can improve routing efficiency under stochastic demand \cite{DasKamenicaMirka2017,Griesbach2023}. In addition, the disclosure of information under monetary incentives does not worsen congestion \cite{ferguson2024monetary}.
In public health and epidemic control, selective transparency about infection or vaccination states can guide behavioral responses and reduce aggregate contagion \cite{alizamir2020healthcare,shah2022optimal,pathak2022epidemic}.  
In operations and supply-chain management, strategically releasing demand or inventory information can coordinate decentralized decisions and mitigate inefficiencies \cite{Beranek2024EJOR}. Similarly, \cite{candogan2025value} studies information design in two-tier supply chains. These diverse applications share a common structure: a designer selects an information policy to influence equilibrium outcomes in a system of interacting agents.

In financial and macroeconomic systems, the information itself functions as a policy instrument. In macroprudential stress testing, disclosure rules determine how much scenario and performance information is revealed to markets and regulators, shaping capitalization outcomes, stigma effects, and contagion \cite{OrlovZryumovSkrzypacz2023,AlonsoZachariadis2021,Inostroza2020}.  
Optimal stress tests may deliberately fail both weak and some strong institutions to limit stigma, while sequential designs outperform static ones by staging recapitalization and public assessments \cite{OrlovZryumovSkrzypacz2023}.  
When large investors possess private information, the precision of public tests can induce either \emph{crowding-in} or \emph{crowding-out} of private capital \cite{AlonsoZachariadis2021}. With multiple audiences—creditors, equity investors, and supervisors—stress-test communication may be opaque for strong banks and transparent for weak ones, aligning disclosure with prudential goals \cite{Inostroza2020}. Analogous principles appear in monetary policy and central banking, where the communication of rate intentions or inflation forecasts can be formulated as an information design problem balancing credibility and stabilization \cite{Tamura2023}. At the systemic level, disclosure interacts with network fragility and contagion: optimal communication becomes conservative when spillovers dominate risk-sharing \cite{FanTang2023}, and network density can shift from stabilizing to destabilizing as shocks grow, linking information to systemic risk \cite{AcemogluOzdaglarTahbazSalehi2015}.

Despite the expanding scope of information design theory, most existing formulations share a critical assumption: the designer has precise knowledge of agents’ payoff functions and the mapping between underlying states and utilities.  
This assumption rarely holds in complex financial or economic systems, where payoff relationships between sectors, assets, or institutions are themselves uncertain and estimated from noisy data. A regulator designing disclosure rules, an exchange publishing volatility indicators, or a policymaker releasing macroeconomic signals must all make information decisions under uncertainty about the true structure of interdependencies. Such uncertainty is not purely epistemic—about beliefs or priors—but \emph{structural}, reflecting incomplete knowledge of the system’s payoff coefficients.

Addressing this challenge requires an equilibrium notion that remains well-defined when the designer’s model of the game is itself uncertain.  
To this end, this paper introduces the \emph{Calibrated Bayes Correlated Equilibrium (Cal-BCE)}, a new solution concept that extends the Bayes correlated equilibrium to settings where payoff coefficients are random variables drawn from a known distribution.  
Cal-BCE incorporates an explicit correction term that reconciles the information designer’s uncertainty with agents’ equilibrium strategies, ensuring consistency between the designer’s optimization and the agents’ best responses.  

In general, the resulting information-design problem is nonconvex, but under the linear–quadratic–Gaussian (LQG) structure, Cal-BCE admits a convex representation. 
Specifically, when utilities are quadratic and payoff states and information structures are Gaussian, the equilibrium feasibility set becomes semidefinite and SOCP representable, allowing tractable formulations under Conditional Value-at-Risk (CVaR) and probabilistic constraints, respectively.
This yields a convex optimization framework that interpolates between expected-value efficiency and tail-risk robustness, while remaining analytically transparent and computationally tractable.  
The LQG formulation thus provides a precise setting in which Cal-BCE can be solved via semidefinite or second-order cone programs, linking information design with risk-sensitive optimization. Beyond tractability, I prove three structural results: an adaptation of the revelation principle for Bayes correlated equilibrium \cite{Bergemann2013,Bergemann2019} to environments with payoff uncertainty (Proposition~\ref{prop:revelation}); a feasibility guarantee under a checkable Hadamard invertibility condition (Proposition~\ref{prop:feasibility}); and a joint decentralization theorem showing that both formulations cap cross-agent action covariances and suppress strategic interdependence, with the CVaR formulation providing tighter suppression at a common tolerance while the realized ordering between the two is governed by their feasibility thresholds (Theorem~\ref{thm:decentralization}). The following subsection situates this contribution within the growing literature on information design under uncertainty and highlights how Cal-BCE complements learning-based and worst-case robust approaches.

\subsection{Literature review on information design under uncertainty.}
Classical information design assumes that the designer possesses full knowledge of agents' utilities and beliefs, an assumption that recent studies have begun to relax in different ways.  
Li and Lin \cite{li2025informationdesignunknownprior} study \emph{information design with unknown receiver priors} and develop sequential persuasion algorithms that achieve logarithmic regret as the designer learns the receiver’s belief through repeated interactions.  
Zu et al. \cite{zu2021learning} similarly investigate a repeated persuasion setting in which both sender and receiver are ignorant of the prior, proposing a robustness criterion that ensures persuasiveness across candidate priors and establishing \(O(\sqrt{T \log T})\) regret bounds.  
Kosterina \cite{svetlana2022unknownbelief} explores persuasion when the designer does not know the receivers’ beliefs, analyzing how incomplete knowledge of belief formation shapes optimal information structures.  
Babichenko et al. \cite{BABICHENKO2022226} study \emph{regret-minimizing Bayesian persuasion} when the sender is ignorant of the receiver’s utility function and quantify how much regret the sender can guarantee despite this ignorance, highlighting a form of robustness grounded in regret minimization.  
Dworczak and Pavan \cite{dworczak2022preparing} take a complementary approach by formulating a \emph{robust Bayesian persuasion} model in which the sender prepares for the worst-case conjecture about the receiver’s type or information environment and then optimizes within the set of such robustly optimal policies.  
Collectively, these works primarily address \emph{epistemic uncertainty} in agents’ beliefs or preferences.

In contrast, the present framework focuses on \emph{structural uncertainty} in the payoff environment of a multi-agent game.  
The proposed \emph{Calibrated Bayes Correlated Equilibrium (Cal-BCE)} introduces an explicit correction mechanism that maintains equilibrium consistency when the designer does not know the true payoff coefficients but only their distribution.  
Rather than learning or minimizing worst-case regret, Cal-BCE yields a static convex characterization of equilibrium feasibility under payoff uncertainty, implemented through probabilistic and CVaR-based constraints.  
This perspective complements robust optimization approaches \cite{sezer_robust2023}, which focus on worst-case realizations of the payoff structure, by providing a distributionally risk-sensitive alternative that interpolates between efficiency and robustness.  
Together, these advances extend the scope of information design from fully specified Bayesian environments toward settings where the designer must operate under incomplete or uncertain knowledge of the underlying game structure.

\noindent \textbf{Contributions:}
This paper studies the problem of information design when the designer does not have full knowledge of agents' utilities. The contributions are as follows.

\noindent\emph{(i) Cal-BCE and a revelation argument under payoff uncertainty.} I introduce the Calibrated Bayes Correlated Equilibrium (Cal-BCE), a solution concept that extends the Bayes correlated equilibrium of \cite{Bergemann2013,Bergemann2019} to settings where the payoff coefficients $\theta=\mathrm{diag}(H)$ are unknown to the designer but known to the agents. Adapting the revelation principle for Bayes correlated equilibrium to this setting---by conditioning on $\theta$ and taking expectations over the designer's uncertainty---I show (Proposition~\ref{prop:revelation}) that optimizing over information structures and corrector policies reduces without loss to optimizing over action distributions subject to Cal-BCE obedience constraints. The contribution here is the corrector-policy construction and the resulting calibrated constraints, not the revelation argument itself, which follows the standard logic.

\noindent\emph{(ii) Tractable convex reformulations.} Under the LQG structure, I derive convex SDP/SOCP formulations for both the CVaR (Theorem~\ref{theorem_cvar}) and probabilistic (Theorem~\ref{theorem_chance}) Cal-BCE programs. The equilibrium constraints are enforced as two-sided bands over the payoff uncertainty distribution, with the chance program using Bonferroni-corrected per-side confidence $(1+\beta)/2$ to guarantee coverage at the prescribed level $\beta$.

\noindent\emph{(iii) Feasibility guarantee.} I provide an explicit sufficient condition (Proposition~\ref{prop:feasibility}, Theorem~\ref{thm:general_existence}): the feasible region is nonempty whenever $\bar{H}\circ\Sigma$ is invertible and the tolerance satisfies $\varepsilon\ge\varepsilon^\star=\kappa_\beta\max_i s_i$, with $\kappa_\beta$ the per-formulation tail multiplier. The certificate is a diagonal linear policy with gains $d=(\bar{H}\circ\Sigma)^{-1}\mathrm{dg}(\Sigma)$ in closed form.

\noindent\emph{(iv) Joint decentralization theorem.} I prove (Theorem~\ref{thm:decentralization}) that \emph{both} the probabilistic and CVaR designs suppress cross-agent action covariances, with each off-diagonal entry capped by $|X_{ij}|\le\varepsilon/(\kappa_\beta\sigma_{ij})$. At a common tolerance the CVaR multiplier $\kappa_\beta^{\mathrm{CVaR}}\ge\kappa_\beta^{\mathrm{chance}}\ge\Phi^{-1}(\beta)$ ($\beta\in[0.9,0.99]$) yields a tighter cap; but since the two formulations operate at design-specific feasibility thresholds $\varepsilon^\star$, the \emph{realized} decentralization ordering is governed by the threshold ratio $\varepsilon^\star_{\mathrm{CVaR}}/\varepsilon^\star_{\mathrm{chance}}$ and is therefore calibration-dependent---reconciling the theory with the empirically observed ordering.

\section{Information Design Problem under Parameter Uncertainty}
An incomplete information game involves a set of $n$ players belonging to the set $\mathcal{N}:=\{1,\dots,n\}$, each of which selects actions $a_i \in \mathcal{A}_i$ to maximize the expectation of its individual payoff function $u_i(a,\gamma)$ where $a \equiv (a_{i})_{i \in \mathcal{N}} \in \mathcal{A} $ and $\gamma \equiv (\gamma_{i})_{i \in \mathcal{N}} \in \Gamma$ correspond to an action profile and an unknown payoff state vector, respectively (See Appendix \ref{app_notation} for a key notation table). I refer to the payoff state that directly influences agent $i$'s action selection by agent $i$'s payoff state $\gamma_{i}$. The agent $i$ forms expectations about its payoff based on its signal/type $\omega_i \in \Omega_i$ about the state $\gamma_{i}$. I denote by $\theta \in \Theta$ the payoff coefficients that are unknown to the designer but known to the agents. In the LQG specialization (Section~\ref{sec:lqg}) the uncertain primitive is the payoff matrix $H\sim\xi$, and $\theta=\mathrm{diag}(H)\in\mathbb{R}^n$ is the vector of agent-specific own-coefficients ($\theta_i=H_{ii}$) against which the per-agent corrector is calibrated; the designer's uncertainty about the remaining (cross-agent) coefficients of $H$ is carried by the distribution $\xi$ and handled through the robust action-channel constraints developed in Section~\ref{sec:lqg}. I represent the incomplete information game by the tuple $G:= \{\mathcal{N}, \mathcal{A}, \Gamma,\Theta,\{u_i\}_{i\in \mathcal{N} }, \{\omega_i\}_{i\in \mathcal{N}}\}$.  The strategy of player $i$ maps each possible value of the private signal  $\omega_{i} \in \Omega_{i}$ to an action $s_{i}(\omega_{i}) \in \mathcal{A}_{i}$, i.e., $s_{i}: \Omega_{i} \rightarrow \mathcal{A}_{i}$. 

A strategy profile $s = (s_{i})_{i \in \mathcal{N}}$ is a Bayesian Nash equilibrium with information structure $\zeta$, if it satisfies the following inequality
\begin{equation}
E_{\zeta}[u_{i}(s_{i}(\omega_{i}), s_{-i},\gamma,\theta )|\omega_{i}] \geq  E_{\zeta}[u_{i}(a_{i}', s_{-i},\gamma, \theta )|\omega_{i}],\label{eq_class_bne} 
\end{equation}
for all $ a_{i}' \in \mathcal{A}_{i}, \omega_{i} \in \Omega_{i}, i \in \mathcal{N}$ where $s_{-i}=  (s_{j}(\omega_{j}))_{j\neq i}$ is the equilibrium strategy of all the players except player $i$, and $E_{\zeta}$ is the expectation operator with respect to the distribution $\zeta$ and the prior on the payoff state $\psi$. 

An information designer aims to maximize an objective function at the system level $f:\mathcal{A}\times \Gamma\to \mathbb{R}$, e.g. social welfare, that depends on the actions of the players and the state realization by deciding on an information structure $\zeta(\omega|\gamma)$ belonging to the feasible space of probability distributions in the signal space $\mathcal{Z}$ given a game with unknown payoffs.  The information structure determines the fidelity of the signals $\{\omega_i\}_{i\in \mathcal{N}}$ that will be revealed to the players given a realization of the payoff state $\gamma$. 

The information designer does not know that actual payoffs but knows that the payoffs $u:=\{u_i\}_{i\in \mathcal{N}}$ belong to a space of payoff functions $U_\theta$ which is parameterized by $\theta.$  I use $\mathcal{G}_\Theta$ to refer to the set of games with payoffs $u\in U_\theta$. Specifically,  $U_\theta$ refers to utility functions which can realize due to payoff coefficient distribution $\xi$ i.e $\theta \sim \xi$. Designer needs to ensure strategies given to agents form an equilibrium, therefore, it needs to consider corrector distribution which I pose to handle information design problem under uncertainty. Corrector distribution $\lambda(\alpha|\theta)$ accounts for deviation to equilibrium caused by unknown parameters $\theta$ where $\alpha $ is the corrector profile to strategies.

\subsection{Information Design via Action Distributions}
Two information design problems under uncertainty are posed below. Instead of working with the information structure $\zeta,$ the analysis will describe problems using action distributions that I formally define below. 

\begin{definition}[Action distribution]
An action distribution is the probability of observing an action profile $a\in \mathcal{A}$ when agents follow a strategy profile $s$ under $\zeta$, which can be computed as %
\begin{equation}\label{eq_phi}
\phi(a|\gamma) = \sum_{\omega: s(\omega)=a}\zeta(\omega|\gamma). 
\end{equation}
\end{definition}
According to the definition, the probability of observing the action profile $a$ is the sum of the conditional probabilities of all the signal profiles $\omega$ under $\zeta$ that induce the action profile $a$ given the strategy profile $s$.

 A calibrated Bayes correlated equilibrium (Cal-BCE) is an action distribution in which no rational agent would profit by unilaterally deviating from equilibrium actions under recommended actions.

 \begin{definition}[Calibrated Bayes correlated equilibrium]\label{def:calbce}
An action distribution $\phi$ under information structure $\zeta$ is a Cal-BCE if and only if it satisfies
\begin{equation}\label{eq:bce-cond}
E_{\lambda}[E_{\phi}[u_{i}((a_{i}-\alpha_{i},a_{-i}),\gamma,\theta)|a_{i}]] \geq E_{\lambda}[ E_{\phi}[u_{i}((a'_{i},a_{-i}),\gamma,\theta)|a_{i}]],  
\end{equation}
$\forall a_{i}, a'_{i} \in \mathcal{A}_{i},  \; i\in \textbf{N}$ where $E_{\phi}[\cdot | a_i]$ is the conditional expectation with respect to the action distribution $\phi$ and information structure $\zeta$ given action $a_i\in \mathcal{A}_i$. I denote the corrector parameter by $\alpha_{i}$ with $\theta$ the corrector distribution given in $\lambda(\alpha|\theta).$ 
\end{definition}

\paragraph{Signal structure under Cal-BCE.}
In the standard Bayes correlated equilibrium (BCE), each agent $i$ receives a private signal $\omega_i$ drawn from the information structure $\zeta(\omega|\gamma)$ and selects an action according to its equilibrium strategy $s_i(\omega_i)$.  
A well-known result \cite{Bergemann2013} shows that when the system parameters are fully known, the designer can equivalently recommend actions instead of sending signals—i.e. $\Omega_i = \mathcal{A}_i$—since the revelation principle ensures no loss of generality.  

However, under payoff uncertainty, this direct equivalence fails because the designer does not know the true utility coefficients $\theta$.  
To preserve equilibrium consistency, the Calibrated Bayes Correlated Equilibrium (Cal-BCE) introduces a correction distribution $\lambda(\alpha|\theta)$ that adjusts the recommended actions to offset potential misspecification.  
Specifically, the designer recommends
\begin{equation}
a_i = \omega_i + \alpha_i,
\end{equation}
where $\omega_i$ is drawn from the intended information policy and $\alpha_i$ is sampled from $\lambda(\alpha|\theta)$.  
Agents then form their best responses with respect to the \emph{effective signal}
\begin{equation}
\tilde{\omega}_i = a_i - \alpha_i,
\end{equation}
which represents the equilibrium-consistent recommendation accounting for the designer’s uncertainty.  
Equivalently, the induced information structure over effective signals can be expressed as
\begin{equation}
\tilde{\zeta}(\tilde{\omega}|\gamma,\theta)
= \int \delta(\tilde{\omega} - (\omega - \alpha))\,\zeta(\omega|\gamma)\,\lambda(\alpha|\theta)\,d\omega\,d\alpha,
\label{eq:tilde-zeta}
\end{equation}
demonstrating that Cal-BCE can be viewed as a standard BCE operating over calibrated signals.  
This perspective clarifies that while the designer recommends actions rather than raw signals, the correction term $\alpha_i$ effectively adjusts these recommendations to ensure equilibrium consistency under payoff uncertainty. In addition, a constructive existence result establishing a nontrivial Cal-BCE with explicit correction under parameter uncertainty is provided in Theorem \ref{thm:general_existence} (d) under  Appendix~\ref{app:Cal-BCE_existence}.

\paragraph{Commitment and information timing.}
The designer commits, \emph{before} the realization of $\gamma$ and \emph{before} learning the payoff matrix $H$, to two objects: (i) an information policy $\zeta$ (equivalently, the action covariance $X$), and (ii) a \emph{corrector policy} $\lambda(\alpha\mid\theta)$, i.e.\ a \emph{rule} mapping the realized uncertainty state $\theta=\mathrm{diag}(H)$ to a distribution over corrections $\alpha$. The commitment is to the policy $\lambda(\cdot\mid\cdot)$, not to any fixed correction; the rule is evaluated only once $\theta$ is realized. The interaction proceeds as follows:
\begin{enumerate}
\item The designer announces $\phi^\star$ (equivalently $\zeta^\star$) and the corrector policy $\lambda(\cdot\mid\cdot)$.
\item Nature draws $(\gamma,H)$ from the prior $\psi\otimes\xi$. The agents observe their own payoff coefficients, hence $\theta$; the designer does not.
\item Agent $i$ receives the recommendation $a_i=\omega_i+\alpha_i$, with $\omega_i$ drawn from $\zeta^\star$ and $\alpha_i$ from $\lambda(\cdot\mid\theta)$. Since $\alpha_i$ is $\theta$-measurable and $\lambda$ is common knowledge, agent $i$---who knows $\theta$---recovers the effective signal $\tilde\omega_i=a_i-\alpha_i$.
\item Agents best-respond to $\tilde\omega_i$; the played profile is $a-\alpha$.
\end{enumerate}
This resolves the apparent circularity of committing to a $\theta$-dependent object under designer uncertainty: the designer commits to the \emph{mapping} $\lambda$, whereas its \emph{evaluation} happens on the agent side, where $\theta$ is known. Accordingly, the obedience requirement in Definition~\ref{def:calbce} is an \emph{expectation}-level condition (over $\lambda$ and the conditional action distribution): the correction is chosen so that, in conditional expectation, playing the corrected recommendation $\tilde\omega_i=a_i-\alpha_i$ is a best response---even though the designer cannot compute the obedient recommendation directly without knowing $H$.

\paragraph{Intuition.}
Under payoff uncertainty, the designer commits to a information structure using a misspecified model, while agents evaluate recommendations under the true coefficients, breaking standard incentive compatibility. The corrector $\alpha_i$ resolves this: by recommending $a_i = \omega_i + \alpha_i$, it allows agents—who observe the true payoffs—to filter out $\alpha_i$ and recover a calibrated signal $\tilde{\omega}_i = a_i - \alpha_i$ aligned with their true best responses. 

Crucially, this expands implementability. While the designer cannot directly condition recommendations on the realized parameters $\theta$, the mapping $\lambda(\alpha \mid \theta)$ allows agents to perform this conditioning ex post using their private information. Cal-BCE thus replicates a $\theta$-contingent mechanism, extending the revelation principle of \cite{Bergemann2019} to environments with structural uncertainty (see Proposition~\ref{prop:revelation}).

\begin{proposition}[Revelation principle under payoff uncertainty]\label{prop:revelation}
Fix a prior $\psi$ over $\gamma$ and a distribution $\xi$ over payoff matrices $H$, and let $\theta=\mathrm{diag}(H)$ be the uncertainty state known to the agents. For any information structure $\zeta$ and corrector policy $\lambda(\alpha\mid\theta)$ whose induced effective signals $\tilde\omega=\omega-\alpha$ satisfy the obedience condition \eqref{eq:bce-cond}, there is an outcome-equivalent \emph{direct} mechanism in which the designer recommends actions $a_i=\omega_i+\alpha_i$ and each agent $i$, knowing $\theta_i$, finds it optimal in conditional expectation to play the corrected recommendation $\tilde\omega_i=a_i-\alpha_i$. Consequently, optimizing the designer's objective over pairs $(\zeta,\lambda)$ is without loss equivalent to optimizing over action distributions $\phi$ subject to the Cal-BCE obedience constraints \eqref{eq:bce-cond}.
\end{proposition}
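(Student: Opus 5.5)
The plan is to follow the standard BCE revelation argument of \cite{Bergemann2013,Bergemann2019}, applied conditionally on $\theta$ and then averaged over the designer's uncertainty. Fix $(\zeta,\lambda)$ and an obedient strategy profile $s$ for the effective signals $\tilde\omega=\omega-\alpha$. Define the direct mechanism as follows. Given $(\gamma,\theta)$, draw $\omega\sim\zeta(\cdot\mid\gamma)$ and $\alpha\sim\lambda(\cdot\mid\theta)$. Then recommend $a_i=s_i(\tilde\omega_i)+\alpha_i$. This is the composition of the original policy with the strategy map, i.e.\ the analogue of \eqref{eq_phi} with the induced $\tilde\zeta$ of \eqref{eq:tilde-zeta} in place of $\zeta$. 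The induced joint law of $(\gamma,\theta,a-\alpha)$ then equals the law of $(\gamma,\theta,s(\tilde\omega))$ under the original pair. So the designer's objective $f$, which depends only on played actions and the state, takes the same expected value under both mechanisms. This establishes outcome equivalence.

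The second step is obedience of the direct mechanism. Because $\lambda$ is common knowledge and $\alpha_i$ is $\theta$-measurable, agent $i$ knows $\theta$ and can compute $\tilde a_i:=a_i-\alpha_i$. Conditioning on $\tilde a_i$ is coarser than conditioning on $\tilde\omega_i$, since $\tilde a_i=s_i(\tilde\omega_i)$. In the original mechanism, obedience says that for every $\tilde\omega_i$ and every deviation $a_i'$ the inequality \eqref{eq:bce-cond} holds. I would take conditional expectations of that inequality over $\{\tilde\omega_i: s_i(\tilde\omega_i)=\tilde a_i\}$ and use the tower property. This shows that following $\tilde a_i$ is optimal given only $\tilde a_i$, in the expectation sense over $\lambda$ and $\phi$ demanded by Definition~\ref{def:calbce}. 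A deviation that depends only on $\tilde a_i$ is a special case of a deviation depending on $\tilde\omega_i$, so the pooled inequality follows by linearity of expectation.

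The third step is the reduction of the optimization problem. The direct mechanism's $\phi$ satisfies \eqref{eq:bce-cond}, so the value over pairs $(\zeta,\lambda)$ is at most the value over obedient $\phi$. For the reverse inequality, any $\phi$ satisfying \eqref{eq:bce-cond} with some corrector $\lambda$ is itself implementable: set $\Omega_i=\mathcal{A}_i$ and $\zeta=\phi$, and use the same $\lambda$. Obedience is then exactly the equilibrium condition for truthful play of the effective signal. The two optimal values therefore coincide.

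The main obstacle I expect is the pooling step. \eqref{eq:bce-cond} is stated as an expectation over $\lambda$ outside the conditional expectation given $a_i$, not pointwise in $\theta$. So I must check that averaging over the fibres of $s_i$ commutes with the outer $E_\lambda$. I would handle this by writing both sides as integrals against the joint law of $(\gamma,\theta,\alpha,\omega)$ and applying Fubini. Measurability of $s_i$ and of $\lambda(\cdot\mid\theta)$ is assumed, which the LQG Gaussian setting later guarantees.
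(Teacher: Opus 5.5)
Your architecture matches the paper's: fix $\theta$, run the Bergemann--Morris revelation argument in the effective-signal game, average over $\theta\sim\xi$, and use that the objective depends on $(\zeta,\lambda)$ only through $\phi$. Doing the pooling over the fibres of $s_i$ by hand, and adding the converse ($\Omega_i=\mathcal A_i$, $\zeta=\phi$), are sound additions the paper leaves implicit.

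The gap is in the step you flagged yourself. You plan to pool the $\xi$-averaged inequality \eqref{eq:bce-cond} directly and reconcile it with the outer expectation by Fubini, and that does not work. Write $D(\tilde\omega_i,\theta)$ for agent $i$'s gain from following $s_i$ rather than a fixed $a_i'$, conditional on $\tilde\omega_i$ at fixed $\theta$. Because $\alpha_i$ is $\theta$-measurable, $E_\lambda$ is degenerate there. The averaged hypothesis gives $\int D(\tilde\omega_i,\theta)\,\xi(d\theta)\ge0$ for each $\tilde\omega_i$. Obedience of the direct mechanism at $\tilde a_i$ instead requires $\int\!\int D(\tilde\omega_i,\theta)\,\mu_\theta(d\tilde\omega_i)\,\xi(d\theta)\ge0$. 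Here $\mu_\theta$ is the conditional law of $\tilde\omega_i$ on the fibre $\{s_i=\tilde a_i\}$ at that $\theta$, and since $\tilde\omega_i=\omega_i-\alpha_i$ with $\alpha_i$ shifting with $\theta$, $\mu_\theta$ depends on $\theta$. Exchanging the integrals is therefore harmless but useless: the $\theta$-dependent weights cannot be pulled out to apply the hypothesis signal by signal.

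A two-point illustration shows the needed inequality does not follow. Take a fibre $\{\tilde\omega^1,\tilde\omega^2\}$ and two equally likely states, with $D(\tilde\omega^1,\theta^1)=D(\tilde\omega^2,\theta^2)=1$ and $D(\tilde\omega^1,\theta^2)=D(\tilde\omega^2,\theta^1)=-1$. With fibre weights $\mu_{\theta^1}=(\tfrac14,\tfrac34)$ and $\mu_{\theta^2}=(\tfrac34,\tfrac14)$, the averaged hypothesis holds at both signals, yet the pooled gain is $-\tfrac12$.

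What is missing is the order of operations. Your opening sentence announces the right order, but your obstacle paragraph abandons it; the paper's proof keeps it. The paper reads obedience at a fixed realization of $\theta$: ``obedience of $\tilde\omega_i$ is exactly inequality \eqref{eq:bce-cond} evaluated at $\theta$.'' This is also the incentive constraint an agent who knows $\theta$ actually faces. It performs the revelation step inside the $\theta$-conditional game, where your tower-property pooling is valid with fixed weights. Only afterwards does it integrate over $\theta\sim\xi$, which preserves the weak inequality and yields \eqref{eq:bce-cond}. If you take the hypothesis in this $\theta$-conditional form and move the averaging to the end, the rest of your argument goes through as written: outcome equivalence by construction, and the two-sided comparison of optimal values.
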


\begin{proof}{Proof}
Condition on a realization of $\theta$. Given $\theta$, the effective signal $\tilde\omega$ defines a standard incomplete-information game whose payoffs are known to the agents, with information structure $\tilde\zeta(\tilde\omega\mid\gamma,\theta)$ in \eqref{eq:tilde-zeta}. By the revelation principle for Bayes correlated equilibrium \cite{Bergemann2013,Bergemann2019}, this game admits an outcome-equivalent representation in which signals are action recommendations and each agent's recommendation is obedient; i.e.\ one may take $\Omega_i=\mathcal{A}_i$ and $\tilde\omega_i$ to be the recommended action. Since $\alpha_i$ is $\theta$-measurable and $\lambda$ is common knowledge, the pair $(a_i,\theta)$ with $a_i=\tilde\omega_i+\alpha_i$ is informationally equivalent to $(\tilde\omega_i,\theta)$ for agent $i$; hence the corrected-recommendation mechanism reproduces the action distribution $\phi$ induced by $\tilde\zeta$, and obedience of $\tilde\omega_i$ is exactly inequality \eqref{eq:bce-cond} evaluated at $\theta$. Taking expectations over $\theta\sim\xi$ (reflecting the designer's uncertainty) yields the Cal-BCE obedience condition as stated. Since the designer's objective depends on $(\zeta,\lambda)$ only through the induced $\phi$, the design problem may be posed directly over $\phi$ subject to \eqref{eq:bce-cond}. \qed
\end{proof}

\begin{remark}[Realizability: optimizing over action distributions]
Denote by $C(\mathcal{Z}) = \{\phi : \phi \text{ satisfies } \eqref{eq_phi} \text{ for some } \zeta \in \mathcal{Z}\}$
the set of all action distributions that can be induced by admissible information structures.
In what follows, the design is posed directly over $\phi$ (equivalently, its covariance block $X$) rather than over $\zeta$: in the jointly Gaussian LQG setting, any $X\in\mathbb{S}_+^{2n}$ whose lower-right block equals the prescribed $\mathrm{var}(\gamma)$ is the covariance of some jointly Gaussian $(a,\gamma)$, and is therefore induced by a feasible Gaussian information structure $\zeta$. This realizability is what justifies replacing the design over $\zeta$ by a semidefinite program over $X$.
\end{remark}

The CVaR and probabilistic information design problems are stated next: 

\begin{problem}[CVAR INFORMATION DESIGN OVER ACTION DISTRIBUTION] \label{prob_cvar_phi}
Conditional-Value-at-Risk is defined formally below.
\begin{definition}[Conditional-Value-at-Risk]
    Conditional-Value-at-Risk (CVaR) for a decision vector $x$ and random vector $y$  at level $\beta \in [0,1]$ is defined as follows \cite{rockafellar2000optimization}:
\begin{equation}
CVaR(q(x,y);\beta)=(1-\beta)^{-1} \int_{q(x,y)> \rho} q(x,y)p(y)dy 
\end{equation}
 where $q(x,y)$ is loss function, $p(y)$ denotes the probability density of $y$ and $\rho$ denotes Value-at-Risk (VaR) which is defined as following:
\begin{align}\label{eq_var_def}
    VaR(q(x,y);\beta) = \inf \bigg\{\varphi\bigg|\; \int_{q(x,y)\leq \varphi}p(y)d(y) \geq \beta \bigg\}.
\end{align}
\end{definition}
Then, CVaR information design problem is as follows:
\begin{align}\label{eq:orig-obj_cvar_phi}
&\max_{\phi,\;t}\;t \\
&\text{s.t } CVaR_{\xi}\bigg(t-E_{\phi}[f(a, \gamma,\theta)];\beta_{F}\bigg)\leq 0 \label{eq:orig-obj_cvar_phi2} \\ &CVaR_{\xi}\bigg(E_{\phi}[u_{i}((a_{i}-\alpha_{i},a_{-i}),\gamma,\theta)|a_{i}]; \beta_{i}\bigg) \geq CVaR_{\xi}\bigg(E_{\phi}[u_{i}((a'_{i},a_{-i}),\gamma,\theta)|a_{i}]; \beta_{i} \bigg), \nonumber \\ &\forall a_{i}, a'_{i} \in \mathcal{A}_{i}, i\in \mathcal{N}\label{eq:bce-cond_cvar}
\end{align}
where \eqref{eq:orig-obj_cvar_phi}-\eqref{eq:orig-obj_cvar_phi2} denote the objective and \eqref{eq:bce-cond_cvar} represents the equilibrium constraint which means agents will obtain lower CVaR of expected utility if they deviate from equilibrium action $a_{i}, \forall i \in \mathcal{N}$. 
\end{problem}

\begin{problem}[PROBABILISTIC INF. DESIGN OVER ACTION DISTRIBUTION]\label{prob_chance_phi} 
The probabilistic information design problem is as follows: 
\begin{align}\label{eq:orig-obj_chance_phi}
&\max_{\phi, \;t} \;t\\ &\text{s.t. }\text{Pr}_{\xi}\bigg(E_{\phi}[f(a, \gamma,\theta)]\geq t\bigg)\geq \beta_{F} \label{eq:orig-obj_chance_phi2} \\
 & \text{Pr}_{\xi}\bigg(E_{\phi}[u_{i}((a_{i}-\alpha_{i},a_{-i}),\gamma,\theta)|a_{i}] \geq E_{\phi}[u_{i}((a'_{i},a_{-i}),\gamma,\theta)|a_{i}] \bigg)\geq \beta_{i}, \forall a_{i}, a'_{i} \in \mathcal{A}_{i},  \; i\in \mathcal{N} \label{eq:bce-cond_chance}
\end{align}
where \eqref{eq:orig-obj_chance_phi}-\eqref{eq:orig-obj_chance_phi2} denote the objective and \eqref{eq:bce-cond_chance} represents the equilibrium constraint which means BCE condition \eqref{eq:bce-cond} will hold for all $\theta$ values up to $\beta^{th}$ percentile of distribution $\xi.$
\end{problem}

Both problems \ref{prob_cvar_phi} and \ref{prob_chance_phi} are non-trivial in current forms.  The analysis specializes to quadratic utilities, Gaussian payoff states and information structures to obtain them as convex programs, namely SDPs and SOCPs. 



\subsection{Information Design Objectives}
The choice of objective function plays a central role in information design, as it determines what aspect of system performance the designer seeks to optimize under informational constraints. Depending on the application, the objective may emphasize aggregate efficiency, systemic risk mitigation, or stability of agents’ actions, reflecting the diverse purposes that information design can serve.
\begin{example}[Social Welfare] \label{ex_social_welfare}
Social welfare is the sum of individual utility functions, 
\begin{align}\label{eq_soc_welfare}
f_{welfare}(a, \gamma, \theta) &= \sum_{i=1}^{n} u_{i}(a,\gamma,\theta).
\end{align}
Social welfare is a common design objective used in congestion  \cite{brown2017studies,wu2021value}, global \cite{morris2002social}  or public goods games \cite{alizamir2020healthcare}.
\end{example}

\begin{example}[Contagion Suppression]\label{ex_contagion}
In systems with cross-sector interdependencies, a key source of systemic risk is the co-movement of agents' actions: when agents react to shocks in a correlated manner, losses can amplify across the network.
A designer wishing to suppress this form of contagion may minimize the total cross-agent action covariance:
\begin{align}\label{eq_obj_contagion}
f_{\text{contagion}}(a) &= -\!\sum_{i \neq j} \mathrm{Cov}(a_i,\, a_j).
\end{align}
In terms of the covariance variable $X$, this objective takes the form $F_{\text{contagion}}\bullet X$ where $[F_{\text{contagion}}]_{1,1} = -(\mathbf{1}\mathbf{1}^\top - I)$ and all other blocks are zero, so that
\[
F_{\text{contagion}}\bullet X \;=\; -\!\sum_{i\neq j} X_{ij}.
\]
Maximizing this objective drives off-diagonal action covariances toward zero, decoupling each agent's response to payoff shocks from those of other agents and thereby reducing the potential for shock amplification across the network.
\end{example}


\paragraph{Discussion.}
The two examples illustrate that the information design framework is flexible with respect to the choice of objective.
Social welfare (Example~\ref{ex_social_welfare}) captures aggregate efficiency; contagion suppression (Example~\ref{ex_contagion}) directly penalizes cross-agent co-movement in the action covariance.
Depending on the application---whether in financial markets, congestion games, or public goods provision---any of these objectives can be embedded in the probabilistic or CVaR information design models developed in this paper.

\section{Information Design in LQG Games}\label{sec:lqg}
An LQG game corresponds to an incomplete information game with quadratic payoff functions and Gaussian information structures and payoff states. Specifically, each player $i \in \mathcal{N}$ decides on his action $a_{i} \in \mathcal{A}_{i} \equiv \mathbb{R}$ according to a payoff function
\begin{equation}\label{utility}
u_{i}(a , \gamma ) = - H_{i,i}a_{i}^{2} - 2 \sum_{j \neq i}H_{i,j}a_{i}a_{j} + 2\gamma_{i}a_{i} +d_{i}(a_{-i}, \gamma)
\end{equation}
where $ \mathcal{A} \equiv \mathbb{R}^{n} $ and $\Gamma \equiv \mathbb{R}^{n}$ that is a quadratic function of player $i$'s action, and is bilinear with respect to $a_i$ and $a_j$, and $a_i$ and $\gamma$. The term  $d_{i}(a_{-i},\gamma)$ is an arbitrary function of the opponents' actions $a_{-i}\equiv (a_{j})_{j \neq i} $ and payoff state $\gamma$; since it does not depend on $a_i$, it does not affect player $i$'s best response. I collect the coefficients of the quadratic payoff function in a matrix $H = [H_{i,j}]_{n\times n}$ which corresponds to $\Theta$ in game $G.$

Payoff state $\gamma$ follows a Gaussian distribution, i.e., $\gamma \sim \psi(\mu , \Sigma)$ where $\psi$ is a multivariate normal probability distribution with mean $\mu\in \mathbb{R}^n$ and covariance matrix $\Sigma$. Payoff structure of the game is defined as $T \equiv ((u_{i})_{i \in N}, \psi)$. Throughout, I adopt the \emph{centering} convention $\mu=\mathbb{E}[\gamma]=0$ (obtained by reparameterizing the prior around its mean, i.e.\ demeaning), together with a mean-zero corrector $\mathbb{E}[\alpha]=0$. This is the natural frame for information design, which reshapes the second-moment (belief and co-movement) structure of $(a,\gamma)$ rather than the exogenous unconditional means; under it the obedience first-moment balance holds with $\mu_a=0$, as detailed in Remark~\ref{rem:first-moment} under Appendix \ref{app_mean} and the design reduces without loss to a program over covariances. The general case $\mu\neq0$ is also treated there.
Each player $i \in \mathcal{N}$ receives a private signal $\omega_{i} \in \Omega_{i} \equiv \mathbb{R}^{m_{i}}$ for some $m_i\in \mathbb{N}^+$. I define the information structure of the game  $\zeta(\omega|\gamma)$ as the conditional distribution of $ \omega\equiv (\omega_{i})_{i \in N}$ given $\gamma$.  I assume the joint distribution over the random variables $(\omega,\gamma)$ is Gaussian; thus, $\zeta$ is a Gaussian distribution.

To illustrate the framework, a running example of an LQG investment game with sector interactions is presented next.
\begin{example}[Investment Game with Sector Interactions]\label{ex:investment}
Each agent $i$ represents an investment sector (e.g., technology, energy, financials) and chooses an investment level $a_i \in \mathbb{R}$. 
The investment entails a quadratic cost, so the direct cost to agent $i$ is proportional to $a_i^2$. 
The private benefit of investment depends on a payoff parameter $\gamma_i$, which can be interpreted as the sector-specific marginal return. 
In addition, the payoff to agent $i$ is influenced by the investment actions of other sectors $j \neq i$ through cross-impact coefficients $H_{ij}$. 

Formally, the payoff of sector $i$ is
\[
u_i(a,\gamma) \;=\; - H_{ii} a_i^2 \;-\; 2 \sum_{j \neq i} H_{ij} a_i a_j \;+\; 2 \gamma_i a_i,
\]
where $H_{ii}>0$ represents the self-cost of investment in sector $i$, 
$H_{ij}$ captures the effect of sector $j$’s investment on sector $i$, 
and $\gamma_i$ is the private marginal return. 

The marginal return to sector $i$’s action is then
\[
\frac{\partial u_i}{\partial a_i} \;=\; -2H_{ii} a_i \;-\; 2 \sum_{j \neq i} H_{ij} a_j \;+\; 2\gamma_i,
\]
which highlights the trade-off between the private return $\gamma_i$ and the aggregate interaction term 
$-2 \sum_{j \neq i} H_{ij} a_j$. 

In numerical experiments given at the section \ref{sec:experiments}, the cross-impact coefficients $H_{ij}$ are estimated empirically from sector exchange traded fund (ETF) price and volume data, 
so that the model captures both the direct investment cost and the spillovers across sectors (See also Appendix \ref{app_etf} for ETF summary tables). 
The information designer then uses this payoff structure to determine optimal information policies under uncertainty.

\medskip
\noindent\textit{Numerical illustration.}  
Consider a two-sector example with technology ($i=1$) and energy ($i=2$). 
Suppose the parameters are
\[
H = \begin{bmatrix} 1 & 0.3 \\ 0.3 & 1.2 \end{bmatrix}, 
\qquad \gamma = (1.5,\; 2.0).
\]
Here $H_{11}=1$ and $H_{22}=1.2$ indicate each sector’s self-cost, while $H_{12}=H_{21}=0.3$ reflects positive spillovers between the sectors. 
The marginal returns are
\[
\frac{\partial u_1}{\partial a_1} = -2a_1 -0.6 a_2 + 3.0, 
\qquad 
\frac{\partial u_2}{\partial a_2} = -2.4a_2 -0.6 a_1 + 4.0.
\]
Technology benefits from its own return $\gamma_1=1.5$, but its incentives are dampened if energy invests heavily (via $-0.6a_2$). 
Conversely, energy ($\gamma_2=2.0$) has higher intrinsic return but also higher self-cost ($H_{22}=1.2$). 
This toy example illustrates how cross-sector interactions, even when small, reshape equilibrium investment levels.
\end{example}

The description of the information design problem for LQG games continues here, designing covariance matrices of the action distribution $\phi(a|\gamma)$ and the corrector distribution $\lambda(\alpha|\theta)$. In LQG games the uncertain primitive is the payoff matrix $H\sim\xi$. Because the corrector acts per agent ($\alpha\in\mathbb{R}^n$), I take the \emph{uncertainty state} to be the vector of own-coefficients
\[
\theta := \mathrm{diag}(H) = (H_{11},\dots,H_{nn})\in\mathbb{R}^n, \qquad \theta_i := H_{ii},
\]
which is precisely the coefficient that multiplies the corrector $\alpha_i$ in each agent's obedience condition. The designer's uncertainty about the \emph{off-diagonal} (cross-agent) coefficients of $H$ is not absorbed by the corrector but is carried through the robust action-channel constraints below, where the full matrix $H$ enters as a random object. With this convention both blocks are $2n\times2n$, and the designer's decisions concern the covariance matrices $X \in \mathbb{S}_+^{2n}$ and $G \in \mathbb{S}_+^{2n}$ defined below:

\begin{equation}\label{eq_x_defn}
X:=\begin{bmatrix}
var(a) & cov(a, \gamma)\\
cov(\gamma, a) & var(\gamma)
\end{bmatrix} 
\text{ and } G:=\begin{bmatrix}
var(\alpha) & cov(\alpha, \theta)\\
cov(\theta, \alpha) & var(\theta)
\end{bmatrix} 
\end{equation} 
where $\mathbb{S}_+^{2n}$ denotes positive semidefinite $2n\times 2n$ matrices. Note that the submatrices $var(\gamma)$ and $var(\theta)$ are given parameters; in particular $var(\theta)=var(\mathrm{diag}(H))$ is the covariance of the own-coefficients under $\xi$.

The expected value of the objective under $\phi$ can be reformulated using the Frobenius product in terms of $X$ and coefficient matrix $F$ as follows,
\begin{align}
E_{\phi}[f(a,\gamma,\theta)] =& E_\phi\big[ \begin{bmatrix}
a^{T}, &\gamma^{T}
\end{bmatrix} F \begin{bmatrix}
a \\ \gamma
\end{bmatrix} \big ], \\
=& F \bullet X \label{eq_obj_exp}
\end{align} where 
$
F=\begin{bmatrix}
[F]_{1,1} & [F]_{1,2}\\ [F]_{1,2} & [F]_{2,2}
\end{bmatrix} \in P^{2n},
$ and note that $[F]_{i,j}$ denotes the $i,j$th $n\times n$ submatrix. the submatrix $[F]_{2,2}$ is set to a zero matrix because $var(\gamma)$ is given. Under the centering convention $\mu=0$ and mean-zero corrector, the identity \eqref{eq_obj_exp} is exact; for $\mu\neq0$ it acquires the additive term $z^\top F z$ with $z=[\mu_a;\mu]$ (see Remark~\ref{rem:first-moment} in Appendix \ref{app_mean}).

The following lemmas describe the equilibrium constraints.

\begin{lemma}[CVaR--Cal-BCE constraints for LQG with linear policies] \label{lem:linear_cvar_X}
Let $u_i(a,\gamma,H)$ be given by \eqref{utility} and suppose:
(i) $(a,\gamma)$ are jointly Gaussian under $\phi$, and $(\alpha,\theta)$ are jointly Gaussian under $\lambda$ with $\theta=\mathrm{diag}(H)$ (so $\theta_i=H_{ii}$);
(ii) $H$ is independent of $(a,\gamma)$ and of the information structure conditional on $\theta$ (designer uncertainty only);
(iii) $H_{ii}>0$ for all $i$.
Then the CVaR–Cal-BCE inequality \eqref{eq:bce-cond_cvar} implies the following convex constraints, in which the equality residuals are controlled in \emph{both} tails:
\begin{align}
&\mathrm{CVaR}_{\xi}\!\bigg(\pm\Big(\sum_{j\in\mathcal N} H_{ij}\,X_{ij}-X_{i,n+i}\Big)\;;\;\beta_i\bigg)\;\le 0,\qquad i\in\mathcal N, \label{eq_affected_inequality_app_cvar}\\
&\mathrm{CVaR}_{\xi}\!\bigg(\pm\Big(\sum_{j\in\mathcal N} H_{ij}\,G_{ij}-G_{i,n+i}\Big)\;;\;\beta_i\bigg)\;\le 0,\qquad i\in\mathcal N, \label{eq_affected_inequality_app_cvar_2}
\end{align}
where ``$\pm$'' means that both the functional and its negation are constrained, so the moment equality $\Lambda^{(\cdot)}_i(H)=0$ is enforced as $\mathrm{CVaR}_\xi(\Lambda;\beta_i)\le0$ \emph{and} $\mathrm{CVaR}_\xi(-\Lambda;\beta_i)\le0$.
Here $X\!\in\! \mathbb S_+^{2n}$ and $G\!\in\!\mathbb S_+^{2n}$ are the covariance blocks defined by \eqref{eq_x_defn}:
$X_{ij}=\mathrm{cov}(a_i,a_j)$ for $1\!\le i,j\!\le n$, and $X_{i,n+i}=\mathrm{cov}(a_i,\gamma_i)$; 
$G_{ij}=\mathrm{cov}(\alpha_i,\alpha_j)$ for $1\!\le i,j\!\le n$, and $G_{i,n+i}=\mathrm{cov}(\alpha_i,\theta_i)$.
\end{lemma}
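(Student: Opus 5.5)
The plan is to follow the standard LQG route to BCE obedience (as in \cite{Bergemann2019}): turn the pointwise obedience condition into first-order conditions, then into covariance identities. After that I lift those identities through the CVaR operator over $H\sim\xi$ in both tails.

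\textbf{Step 1: the first-order condition.} Fix a realization of $H$, and hence of $\theta=\mathrm{diag}(H)$. Under \eqref{utility}, $u_i$ is strictly concave in $a_i$ because $H_{ii}>0$ by (iii). So the conditional expected utility $E_\phi[u_i((a_i',a_{-i}),\gamma,H)\mid a_i]$ is a strictly concave quadratic in the deviation $a_i'$. The obedience inequality, that the corrected recommendation $a_i-\alpha_i$ is optimal, is therefore equivalent to the first-order condition
\[
E\big[\,H_{ii}(a_i-\alpha_i)+\textstyle\sum_{j\neq i}H_{ij}a_j-\gamma_i \,\big|\, a_i\big]=0 .
\]
This uses (ii): $H$ is independent of $(a,\gamma)$ given $\theta$, so the coefficients pass through the inner conditional expectation.

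\textbf{Step 2: from conditional to unconditional moments.} By joint Gaussianity (i), the conditional expectation given $a_i$ is affine in $a_i$. With the centering convention $\mu=0$ and $E[\alpha]=0$, the condition holds for all $a_i$ exactly when its covariance with $a_i$ vanishes. This gives the moment condition
\[
\textstyle\sum_j H_{ij}X_{ij}-X_{i,n+i}-H_{ii}\,\mathrm{cov}(\alpha_i,a_i)=0 .
\]
I then split this into an action channel and a corrector channel. I take $\alpha$ to be generated from $\theta$ independently of $(\omega,\gamma)$, so the $\alpha$-terms and the $a$-terms decouple. Applying the same argument to the corrector policy, with $\theta_i$ in the role of the state, gives the calibration identity $\sum_j H_{ij}G_{ij}-G_{i,n+i}=0$. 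Call the two residuals $\Lambda^{X}_i(H)$ and $\Lambda^{G}_i(H)$. Each is affine in the random $H$ for fixed $(X,G)$, so each is also affine in the decision variables.

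\textbf{Step 3: lifting through CVaR.} Constraint \eqref{eq:bce-cond_cvar} compares CVaRs of the obedient and deviating conditional utilities. Their difference is a nonnegative quadratic in the deviation, and its linear term is $\Lambda_i(H)$. Requiring the CVaR ordering for every deviation $a_i'$, including deviations in both directions, forces the upper-tail risk of both $\Lambda_i$ and $-\Lambda_i$ to be nonpositive at level $\beta_i$. I would show this by contradiction: if $\mathrm{CVaR}_\xi(\Lambda_i;\beta_i)>0$, take a small deviation in the sign that makes the linear term dominate the quadratic remainder. Positive homogeneity and subadditivity of CVaR then yield a violation.

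This gives \eqref{eq_affected_inequality_app_cvar}, and \eqref{eq_affected_inequality_app_cvar_2} follows by the same argument. Convexity is immediate: CVaR is convex in the loss, and each $\pm\Lambda_i$ is affine in $(X,G)$ for each $H$. The Rockafellar--Uryasev representation \cite{rockafellar2000optimization} makes the constraints jointly convex in $(X,G,\rho)$.

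\textbf{Main obstacle.} I expect Step 3 to be the delicate part. The CVaR functional is not linear, so the CVaR of the utility difference cannot simply be replaced by its linear term. The control of the quadratic remainder relies on the deviation term $-H_{ii}(a_i'-\cdot)^2$ being of second order in the deviation size. It also relies on CVaR being Lipschitz under bounded perturbations, which I would secure by assuming $H$ has finite second moments under $\xi$.
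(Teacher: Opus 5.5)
Your Steps 1--2 follow the paper's route: fix $H$, use $H_{ii}>0$ to reduce obedience to the first-order condition, and project onto the centered instrument $a_i$. Step 3, however, has a genuine gap, and it is not a matter of controlling the quadratic remainder.

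Steps 1 and 3 rest on different premises. Step 1 assumes that $a_i-\alpha_i$ is a best response for each fixed realization of $H$. Step 3 tries to work only from the comparison of CVaRs in \eqref{eq:bce-cond_cvar}. Under the first premise Step 3 is unnecessary, and under the second the contradiction cannot be produced. Subadditivity only gives $\mathrm{CVaR}_\xi(U_{\mathrm{ob}})\le\mathrm{CVaR}_\xi(U_{\mathrm{dev}})+\mathrm{CVaR}_\xi(U_{\mathrm{ob}}-U_{\mathrm{dev}})$, so exhibiting a violation needs $\mathrm{CVaR}_\xi(U_{\mathrm{ob}}-U_{\mathrm{dev}})<0$. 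The linear part of the gap is proportional to $\delta\,\Lambda_i(H)$, so this essentially requires $\mathrm{CVaR}_\xi(\Lambda_i)<0$ or $\mathrm{CVaR}_\xi(-\Lambda_i)<0$. The assumption $\mathrm{CVaR}_\xi(\Lambda_i)>0$ does not give this: a $\Lambda_i$ spread around zero has both tails positive.

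In fact the implication you want fails. Take $\alpha\equiv0$, $d_i\equiv0$, a deterministic own coefficient $H_{ii}\equiv h>0$, Gaussian off-diagonal entries, and centered Gaussian $(a,\gamma)$. Then $\Lambda_i(H)\sim\mathcal N(m_i,s_i^2)$ and
\[
E_\phi\big[u_i((a_i',a_{-i}),\gamma,H)\,\big|\,a_i\big]=-h\,a_i'^2+2h\,a_i'a_i-\frac{2a_i'a_i}{X_{ii}}\,\Lambda_i(H).
\]
Its CVaR over $\xi$ is $-h\,a_i'^2+2h\,a_i'a_i+\frac{2}{X_{ii}}\big(\kappa\,|a_i'a_i|\,s_i-a_i'a_i\,m_i\big)$ with $\kappa=\kappa_\beta^{\mathrm{CVaR}}$.
\begin{itemize}
\item Obedience at any $a_i\neq0$ forces $m_i=\kappa s_i$.
\item It does hold if in addition $2\kappa s_i\le hX_{ii}$. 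Such $X$ exist: for example $n=2$, $\Sigma=I$, $a=K\gamma$ with small off-diagonal entries of $K$ and diagonal entries near $1/h$ tuned to $m_i=\kappa s_i$.
\item Yet $\mathrm{CVaR}_\xi(\Lambda_i;\beta_i)=m_i+\kappa s_i=2\kappa s_i>0$.
\end{itemize}
Reading the CVaR of a utility as a lower-tail measure only flips the sign, giving $\mathrm{CVaR}_\xi(-\Lambda_i;\beta_i)=2\kappa s_i>0$ instead.

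More structurally, $\mathrm{CVaR}_\beta(Z)\ge E[Z]$ with equality only for a.s.\ constant $Z$. So the zero-tolerance two-sided constraints \eqref{eq_affected_inequality_app_cvar}--\eqref{eq_affected_inequality_app_cvar_2} are equivalent to $\Lambda_i(H)=0$ for $\xi$-a.e.\ $H$. That is a pathwise statement, which no tail-averaged premise can deliver.

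The paper never attempts your Step 3. Its proof takes obedience realization by realization: its $\Phi_i(\cdot\,;a_i,H)$ is defined for a fixed $H$. It gets $\Lambda_i(H)=0$ for each $H$ from essentially the projection you use. It then imposes the CVaR counterparts, which such identities satisfy trivially since $\mathrm{CVaR}_\xi(0;\beta)=0$. The repair is to state the premise as realization-wise obedience, keep Steps 1--2, and replace Step 3 by that one-line observation.

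Your channel split in Step 2 is a second, independent problem.
\begin{itemize}
\item With $a_i=\omega_i+\alpha_i$ and $\alpha$ independent of $(\omega,\gamma)$, one has $\mathrm{cov}(\alpha_i,a_i)=\mathrm{var}(\alpha_i)=G_{ii}$. So the term $H_{ii}\,\mathrm{cov}(\alpha_i,a_i)$ does not drop, unless $X$ is read as the covariance of the played profile $a-\alpha$.
\item One scalar identity cannot yield two.
\item $\theta_i=H_{ii}$ multiplies $a_i^2$ rather than entering linearly like $\gamma_i$, so there is no obedience condition in which $\theta_i$ plays the role of the state.
\end{itemize}
The paper's own corrector-channel step is explicitly informal at this point. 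The $G$-identity is therefore best treated as a separate requirement on $\lambda$ rather than a consequence of obedience, as the paper effectively does in Theorem~\ref{thm:general_existence}(d).
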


\begin{proof}{Proof}
\emph{Step 1 (Deviation gap and conditional best response).}
Fix player $i$ and a recommended signal/action $a_i$.
For any deviation $a_i'\!\in\!\mathbb R$, the conditional expected payoff under $\phi$ is
\[
\Phi_i(a_i'\,;a_i,H)\;:=\; E_{\phi}\big[u_i\big( (a_i',a_{-i}),\gamma,H\big)\;\big|\;a_i\big]
= -H_{ii}(a_i')^2 - 2\sum_{j\ne i} H_{ij}\,a_i'\,m_{ij} + 2 a_i'\,m_{i\gamma} + C,
\]
where $m_{ij}:=E_{\phi}[a_j\,|\,a_i]$ and $m_{i\gamma}:=E_{\phi}[\gamma_i\,|\,a_i]$, and $C$ does not depend on $a_i'$.
Since $H_{ii}>0$, $\Phi_i(\cdot;a_i,H)$ is a strictly concave quadratic in $a_i'$.
Hence the unique conditional best response given $a_i$ is
\begin{equation}\label{eq:BRi}
a_i^{\star}(a_i;H)\;=\;\frac{m_{i\gamma}-\sum_{j\ne i} H_{ij}\,m_{ij}}{H_{ii}}.
\end{equation}
The Cal-BCE (with corrector) requires that the played action $(a_i-\alpha_i)$ weakly dominates any deviation $a_i'$ \emph{given} $a_i$; for a concave objective this is equivalent to requiring that $(a_i-\alpha_i)$ coincides with the maximizer in \eqref{eq:BRi}, almost surely:
\begin{equation}\label{eq:FOC_pathwise}
H_{ii}\,(a_i-\alpha_i)\;+\;\sum_{j\ne i} H_{ij}\,m_{ij}\;-\;m_{i\gamma}\;=\;0\quad\text{a.s.}
\end{equation}

\emph{Step 2 (From pathwise optimality to moment conditions).}
Equation \eqref{eq:FOC_pathwise} is an affine relation in the random objects $(a,\gamma,\alpha)$.
To obtain constraints that only involve second moments (the SDP variables), multiply \eqref{eq:FOC_pathwise} by a centered instrument and take expectations, using the law of iterated expectations.

\smallskip
\noindent\underline{(a) BCE channel (actions vs. state).}
Multiply \eqref{eq:FOC_pathwise} by $(a_i-E[a_i])$ and take $E_\phi[\cdot]$.
Using $E[(a_i-E[a_i])\,m_{ij}]=\mathrm{cov}(a_i,a_j)$ and $E[(a_i-E[a_i])\,m_{i\gamma}]=\mathrm{cov}(a_i,\gamma_i)$, I get
\[
H_{ii}\,\mathrm{var}(a_i)\;+\;\sum_{j\ne i} H_{ij}\,\mathrm{cov}(a_i,a_j)\;-\;\mathrm{cov}(a_i,\gamma_i)\;=\;0,
\]
i.e.,
\begin{equation}\label{eq:BCE_moment}
\sum_{j\in\mathcal N} H_{ij}\,X_{ij}\;-\;X_{i,n+i}\;=\;0,
\end{equation}
with $X_{ij}=\mathrm{cov}(a_i,a_j)$ and $X_{i,n+i}=\mathrm{cov}(a_i,\gamma_i)$, as defined in \eqref{eq_x_defn}.
Thus, the \emph{no-deviation condition in expectation} along the action/state channel is a single linear moment equation in the $X$-block.

\smallskip
\noindent\underline{(b) Corrector channel (designer’s uncertainty).}
Because the designer does not know $\theta=\mathrm{diag}(H)$ while agents do, the corrector $\alpha$ is drawn from $\lambda(\alpha\,|\,\theta)$ and enters the played action $(a_i-\alpha_i)$.
Repeat the same argument, now multiplying \eqref{eq:FOC_pathwise} by $(\alpha_i-E[\alpha_i])$ and taking expectation over $\lambda$ (and $\phi$ when needed). Under the structural assumptions stated above—$\alpha$ measurable with respect to $\theta$, and independent of $(\gamma,\omega)$ given $\theta$—I obtain
\[
\sum_{j\in\mathcal N} H_{ij}\,\mathrm{cov}(\alpha_i,\alpha_j)\;-\;\mathrm{cov}(\alpha_i,\theta_i)\;=\;0,
\]
i.e.,
\begin{equation}\label{eq:Cal-BCE_moment}
\sum_{j\in\mathcal N} H_{ij}\,G_{ij}\;-\;G_{i,n+i}\;=\;0,
\end{equation}
with $G_{ij}=\mathrm{cov}(\alpha_i,\alpha_j)$ and $G_{i,n+i}=\mathrm{cov}(\alpha_i,\theta_i)$, matching the $G$-block in \eqref{eq_x_defn}.
(Informally: the corrector aligns the uncertain coefficient channel, so its second moments must satisfy the analogous no-deviation moment balance.)

\emph{Step 3 (From equalities to CVaR constraints).}
The random matrix $H$ is drawn from $\xi$.
Both left-hand sides in \eqref{eq:BCE_moment}–\eqref{eq:Cal-BCE_moment} are \emph{affine} in $H$:
\[
\Lambda^{(X)}_i(H)\;:=\;\sum_j H_{ij} X_{ij} - X_{i,n+i},\qquad
\Lambda^{(G)}_i(H)\;:=\;\sum_j H_{ij} G_{ij} - G_{i,n+i}.
\]
By coherence and monotonicity of CVaR, enforcing
$\mathrm{CVaR}_{\xi}(\Lambda^{(X)}_i(H);\beta_i)\le 0$
and
$\mathrm{CVaR}_{\xi}(\Lambda^{(G)}_i(H);\beta_i)\le 0$
ensures that the \emph{worst $\,(1-\beta_i)$-tail} (over draws of $H$) respects the no-deviation moment balance in both channels. This yields \eqref{eq_affected_inequality_app_cvar}–\eqref{eq_affected_inequality_app_cvar_2}.

\emph{Step 4 (Why it suffices to check only the best deviation).}
Since $\Phi_i(\cdot;a_i,H)$ is strictly concave in $a_i'$, the inequality
\[
E_{\phi}\!\left[u_i\big((a_i-\alpha_i,a_{-i}),\gamma,H\big)\,\big|\,a_i\right]
\;\ge\;
E_{\phi}\!\left[u_i\big((a_i',a_{-i}),\gamma,H\big)\,\big|\,a_i\right]
\quad \forall a_i'
\]
holds if and only if it holds at the unique maximizer $a_i^{\star}(a_i;H)$ in \eqref{eq:BRi}, which is equivalent to the first-order condition \eqref{eq:FOC_pathwise}.
Thus the deviation check reduces to the first-order condition \eqref{eq:FOC_pathwise}. Projecting \eqref{eq:FOC_pathwise} onto the centered instruments in Step~2 yields the second-moment identities \eqref{eq:BCE_moment}--\eqref{eq:Cal-BCE_moment} as \emph{necessary} consequences, and enforcing their CVaR counterparts in Step~3 gives the stated convex constraints. \qed
\end{proof}

\begin{lemma}[Chance-constrained Cal-BCE for LQG with linear policies]\label{lem_prob}
Let $u_i(a,\gamma,H)$ be given by \eqref{utility} and suppose:
(i) $(a,\gamma)$ are jointly Gaussian under $\phi$, and $(\alpha,\theta)$ are jointly Gaussian under $\lambda$ with $\theta=\mathrm{diag}(H)$ (so $\theta_i=H_{ii}$);
(ii) $H$ is independent of $(a,\gamma)$ and of the information structure conditional on $\theta$;
(iii) $H_{ii}>0$ for all $i$; and
(iv) the distribution $\xi$ of $H$ is absolutely continuous.
Define the affine functionals
\[
\Lambda^{(X)}_i(H):=\sum_{j\in\mathcal N} H_{ij}\,X_{ij}-X_{i,n+i},\qquad
\Lambda^{(G)}_i(H):=\sum_{j\in\mathcal N} H_{ij}\,G_{ij}-G_{i,n+i},
\]
where the covariance blocks $X,G$ are as in \eqref{eq_x_defn}.
Then the chance-constrained Cal-BCE condition \eqref{eq:bce-cond_chance} is enforced by the following \emph{two-sided} chance constraints, for some tolerances $\varepsilon_X,\varepsilon_G\ge 0$ and confidence levels $\beta_i\in(0,1)$:
\begin{align}
Pr_{\xi}\!\big(|\Lambda^{(X)}_i(H)|\le \varepsilon_X\big)\;\ge\;\beta_i,\qquad
Pr_{\xi}\!\big(|\Lambda^{(G)}_i(H)|\le \varepsilon_G\big)\;\ge\;\beta_i,\qquad i\in\mathcal N, \label{eq:chance_bands}
\end{align}
are sufficient. In the limit $\varepsilon_X=\varepsilon_G=0$, the two-sided bands reduce to equalities, but $Pr_{\xi}(\Lambda=0)=0$ under (iv), so a nonzero tolerance is required for nontrivial feasibility in the probabilistic model.
\end{lemma}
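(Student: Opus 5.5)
The plan follows the structure of the proof of Lemma~\ref{lem:linear_cvar_X}, reusing Steps~1--2 verbatim and changing only the passage from pathwise moment identities to constraints under $\xi$. For fixed $H$ with $H_{ii}>0$, the conditional payoff $\Phi_i(\cdot;a_i,H)$ is strictly concave in $a_i'$. Hence, as in Step~4 there, the obedience inequality inside the probability in \eqref{eq:bce-cond_chance} holds for all $a_i'$ iff the first-order condition \eqref{eq:FOC_pathwise} holds. Projecting \eqref{eq:FOC_pathwise} onto the centered instruments $a_i-E[a_i]$ and $\alpha_i-E[\alpha_i]$ yields $\Lambda^{(X)}_i(H)=0$ and $\Lambda^{(G)}_i(H)=0$. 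Assumption (ii) is what allows $H$ to be treated as a fixed parameter inside $E_\phi$ and $E_\lambda$. Thus, for each realization $H$, the event in \eqref{eq:bce-cond_chance} is identified, at the level of second-moment (Gaussian linear) policies, with the event $\{\Lambda^{(X)}_i(H)=0,\ \Lambda^{(G)}_i(H)=0\}$. Assumption (i) is used here: within jointly Gaussian linear policies, the conditional expectations $m_{ij},m_{i\gamma}$ are linear in $a_i$, so these two moment equations capture the FOC.

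Next I would replace the exact events by tolerance bands. The event $\{|\Lambda^{(X)}_i|\le\varepsilon_X,\ |\Lambda^{(G)}_i|\le\varepsilon_G\}$ is read as an $\varepsilon$-obedience event. Indeed, a residual of size $\delta$ in the FOC costs the agent at most $\delta^2/H_{ii}$ in conditional expected utility, since the gap between the maximized concave quadratic and its value at the recommended point is exactly $(\text{FOC residual})^2/H_{ii}$. So on the band, the deviation gain is controlled by a quantity vanishing as $\varepsilon\to0$. Then $\Pr_\xi$ of the obedience event is at least $\Pr_\xi$ of the band event. Imposing \eqref{eq:chance_bands} per channel, and combining the two channels (via the Bonferroni correction mentioned in the contributions if joint coverage is wanted), gives the required $\beta_i$ coverage. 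This is the sense of ``sufficient'' in the statement. Two-sidedness is essential, since a one-sided constraint $\Pr(\Lambda\le\varepsilon)\ge\beta$ would allow arbitrarily negative residuals, i.e.\ an unbounded FOC violation.

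Finally, I would prove the degenerate remark. Because $\Lambda^{(X)}_i(H)$ is affine in $H$ with coefficient vector $(X_{ij})_j$, its law under absolutely continuous $\xi$ (assumption (iv)) is absolutely continuous on $\mathbb R$ whenever some $X_{ij}\neq0$. In that case $\{\Lambda=0\}$ is a hyperplane of Lebesgue measure zero and $\Pr_\xi(\Lambda=0)=0<\beta_i$. The only escape is the trivial case where all $X_{ij}=0$ and $X_{i,n+i}=0$, i.e.\ zero action variance, which yields no information content. The analogous argument applies to $G$, using $G_{ii}>0$ for a nontrivial corrector. Hence $\varepsilon>0$ is needed for nontrivial feasibility.

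The main obstacle is the second step. I must make precise the sense in which the band event implies the probabilistic obedience inequality, because \eqref{eq:bce-cond_chance} is stated exactly while the bands only give approximate obedience. I would handle this by interpreting \eqref{eq:bce-cond_chance} at tolerance $\varepsilon$, with the exact statement recovered as $\varepsilon\to0$. I would also flag that the projection onto instruments gives only necessary moment conditions, so sufficiency is asserted within the class of linear Gaussian policies where those moments characterize the FOC.
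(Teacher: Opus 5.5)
Your skeleton is the paper's: concavity reduces obedience to \eqref{eq:FOC_pathwise}, projection onto centered instruments gives $\Lambda^{(X)}_i=\Lambda^{(G)}_i=0$, and these identities are then relaxed to two-sided bands. Your null-set argument is correct and sharper than the paper's remark: a nonzero affine functional of an absolutely continuous $H$ has an atomless law. The paper, however, proves only the necessity direction and then asserts that imposing the identities with probability $\beta_i$ yields the bands. So the step you add to obtain \emph{sufficiency} is new, and as written it fails.

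Use your own Gaussian-linear observation with centered variables: $m_{ij}=(X_{ij}/X_{ii})a_i$ and $m_{i\gamma}=(X_{i,n+i}/X_{ii})a_i$. Then the residual of \eqref{eq:FOC_pathwise} is
\[
\delta_i=\frac{a_i}{X_{ii}}\,\Lambda^{(X)}_i(H)-H_{ii}\,\alpha_i .
\]
The first term is governed by $\Lambda^{(X)}_i$, which gives the converse for the action channel that the paper lacks. The second term is the realized correction. But $\Lambda^{(G)}_i(H)$ is a deterministic function of $(H,G)$ and does not involve the realized $\alpha_i$, so no band on it makes $H_{ii}\alpha_i$ small. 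Hence your identification of the obedience event with $\{\Lambda^{(X)}_i=0,\ \Lambda^{(G)}_i=0\}$ holds only when $\alpha_i\equiv0$. On the band event, your gain $\delta_i^2/H_{ii}$ tends to $H_{ii}\alpha_i^2$, not to $0$, as $\varepsilon_X,\varepsilon_G\to0$.

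The same formula exposes three further problems. First, even with $\alpha\equiv0$, $\delta_i$ grows with $|a_i|$. The band then bounds the obedience gap by $a_i^2\varepsilon_X^2/(H_{ii}X_{ii}^2)$ per recommendation, or by $\varepsilon_X^2/(H_{ii}X_{ii})$ ex ante, but not uniformly over $a_i$.

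Second, your reinterpretation at tolerance $\varepsilon$ is forced, not optional. By the necessity direction that both you and the paper derive, exact obedience at $H$ forces $\Lambda^{(X)}_i(H)=0$. By your own null-set argument, this event has $\xi$-probability zero under (iv) whenever $X_{ii}>0$. So no band with $\varepsilon_X>0$ can imply \eqref{eq:bce-cond_chance} in its exact form for any $\beta_i>0$. Your route can therefore deliver only an approximate-obedience statement for the action channel. The corrector channel would need a constraint that controls the realized $\alpha_i$, which $\Lambda^{(G)}_i$ does not provide.

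Third, if obedience requires both bands, imposing each at level $\beta_i$ as in \eqref{eq:chance_bands} guarantees their intersection only at level $2\beta_i-1$. The per-channel levels would have to be raised, for instance to $(1+\beta_i)/2$.
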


\begin{proof}{Proof}
\emph{Step 1 (Conditional best response is linear in conditional means).}
Fix $i$ and a recommended $a_i$. For any deviation $a'_i\in \mathbb{R}$,
\[
E_{\phi}\!\big[u_i\big((a'_i,a_{-i}),\gamma,H\big)\big|a_i\big]
= -H_{ii}(a'_i)^2 - 2\sum_{j\neq i} H_{ij} a'_i\,m_{ij} + 2 a'_i\,m_{i\gamma} + C,
\]
with $m_{ij}:=E_{\phi}[a_j|a_i]$, $m_{i\gamma}:=E_{\phi}[\gamma_i|a_i]$ and constant $C$ independent of $a'_i$.
Since $H_{ii}>0$, the unique conditional maximizer is
\begin{equation}\label{eq:BRi_prob}
a_i^{\star}(a_i;H)=\frac{m_{i\gamma}-\sum_{j\neq i} H_{ij}\,m_{ij}}{H_{ii}}.
\end{equation}

\emph{Step 2 (Pathwise optimality implies a linear identity).}
The Cal-BCE condition in the chance model requires that the played action $(a_i-\alpha_i)$ be (weakly) optimal with high probability over $H$. Concavity implies it suffices to check the unique maximizer, hence
\begin{equation}\label{eq:pathwise_prob}
H_{ii}\,(a_i-\alpha_i)+\sum_{j\neq i} H_{ij}\,m_{ij}-m_{i\gamma}=0
\quad \text{holds with high probability over }H.
\end{equation}

\emph{Step 3 (Instrument with centered variables to obtain moment identities).}
Multiply \eqref{eq:pathwise_prob} by $(a_i-E[a_i])$ and take unconditional expectations under $\phi$; using the law of iterated expectations and $E[(a_i-E[a_i])\,m_{ij}]=Cov(a_i,a_j)$ and $E[(a_i-E[a_i])\,m_{i\gamma}]=Cov(a_i,\gamma_i)$ yields
\[
\sum_{j\in\mathcal N} H_{ij}\,X_{ij}-X_{i,n+i}=0.
\]
Likewise, multiplying \eqref{eq:pathwise_prob} by $(\alpha_i-E[\alpha_i])$ and taking expectations under $\lambda$ (and $\phi$ when needed), under the structural independence in the statement, gives
\[
\sum_{j\in\mathcal N} H_{ij}\,G_{ij}-G_{i,n+i}=0.
\]
Thus, for each $i$ the affine functionals $\Lambda^{(X)}_i(H)$ and $\Lambda^{(G)}_i(H)$ are obtained.

\emph{Step 4 (Chance constraints).}
Because $H\sim\xi$ and $\Lambda^{(\cdot)}_i(H)$ are affine in $H$, enforcing that the identity in Step 3 holds \emph{with probability at least $\beta_i$} leads to the two-sided probabilistic band in \eqref{eq:chance_bands}.

Under (iv), the exact equality event has measure zero, hence $\varepsilon_X=\varepsilon_G=0$ renders the chance constraints trivial unless $\beta_i=0$; in practice one takes small nonzero tolerances. \qed
\end{proof}

\paragraph{Discussion.}
Lemmas~\ref{lem:linear_cvar_X} and~\ref{lem_prob} establish that the Calibrated Bayes Correlated Equilibrium (Cal-BCE) conditions can be represented as convex semidefinite constraints on the covariance matrices $X$ and $G$ in linear–quadratic–Gaussian (LQG) games. The $X$-block enforces the Bayes correlated equilibrium (BCE) consistency between actions and payoff states, while the $G$-block captures the corrective alignment that accounts for the designer’s uncertainty about the payoff matrix $H$.  
Together, they ensure that the information designer’s policy induces strategies that remain incentive-compatible even when payoffs are only partially known.

The probabilistic and CVaR formulations correspond to distinct attitudes toward uncertainty. The \emph{probabilistic Cal-BCE} enforces equilibrium feasibility with high probability under the distribution of $H$, ensuring that best-response relations hold “most of the time” while allowing small deviations in rare cases.  
The \emph{CVaR–Cal-BCE}, by contrast, guarantees that the same equilibrium balance holds in expectation over the worst tail of the distribution, eliminating profitable deviations even under extreme parameter realizations.  
Hence, the probabilistic model emphasizes efficiency and average-case performance, whereas the CVaR model prioritizes robustness and stability against systemic shocks.

Economically, these formulations provide two complementary risk postures for information design under parameter uncertainty.  
The probabilistic version corresponds to a designer optimizing for expected welfare when the environment is stable, while the CVaR formulation represents a risk-averse designer seeking to suppress destabilizing strategic interdependence and preserve equilibrium stability under stress.  
Cal-BCE thus generalizes the classical BCE by embedding a risk-sensitive correction mechanism that ensures equilibrium consistency under payoff uncertainty.  
When applied to the linear–quadratic–Gaussian (LQG) setting, this framework admits a convex representation that enables tractable computation and comparative analysis of efficiency–robustness trade-offs.

Detailed descriptions of the CVaR (Section~\ref{sec_cvar}) and probabilistic (Section~\ref{sec_chance}) information design models in LQG games. 

\section{Information Design under Risk with CVaR}\label{sec_cvar}
The convex programming formulation of the CVaR information design model in Problem~\ref{prob_cvar_phi} follows. First provide the following well-known representation of Conditional Value-at-Risk due to Rockafellar and Uryasev \cite{rockafellar2000optimization}.

\begin{lemma}[Rockafellar--Uryasev representation of CVaR]\label{lem:RU}
For any random variable $Z$ and confidence level $\beta \in (0,1)$,
\[
\mathrm{CVaR}_{\xi}(Z;\beta)
= \min_{\rho \in \mathbb{R}} \Big\{ \rho + \tfrac{1}{1-\beta} \, \mathbb{E}_{\xi}\big[(Z-\rho)_{+}\big] \Big\},
\]
where $(x)_{+}=\max\{x,0\}$. 
\end{lemma}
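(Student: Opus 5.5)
The plan is to prove the Rockafellar--Uryasev identity by studying the auxiliary function
\[
F_\beta(\rho):=\rho+\tfrac{1}{1-\beta}\,\mathbb{E}_\xi\big[(Z-\rho)_+\big].
\]
I will show that $F_\beta$ is minimized at $\rho$ equal to the Value-at-Risk defined in \eqref{eq_var_def}, and that its minimum value equals the tail-expectation definition of CVaR given before Problem~\ref{prob_cvar_phi}. Throughout I assume $\mathbb{E}_\xi|Z|<\infty$, so that $F_\beta$ is finite everywhere.

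\emph{Step 1: convexity and coercivity.} The map $\rho\mapsto(z-\rho)_+$ is convex for every $z$, so $F_\beta$ is convex in $\rho$. As $\rho\to+\infty$, $F_\beta(\rho)\ge\rho\to\infty$. As $\rho\to-\infty$, $F_\beta(\rho)\approx\rho+\frac{1}{1-\beta}(\mathbb{E}Z-\rho)\to\infty$, because $\frac{1}{1-\beta}>1$. Hence a finite minimizer exists.

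\emph{Step 2: identify the minimizers.} Differentiating under the expectation is justified by dominated convergence, since difference quotients of $(z-\rho)_+$ are bounded by $1$. This gives the one-sided derivatives
\[
F'_{\beta,+}(\rho)=1-\tfrac{1}{1-\beta}\Pr(Z>\rho),\qquad
F'_{\beta,-}(\rho)=1-\tfrac{1}{1-\beta}\Pr(Z\ge\rho).
\]
By convexity, $\rho^\star$ is a minimizer iff $F'_{\beta,-}(\rho^\star)\le 0\le F'_{\beta,+}(\rho^\star)$, that is,
\[
\Pr(Z\le\rho^\star)\ge\beta\quad\text{and}\quad\Pr(Z<\rho^\star)\le\beta .
\]
The set of such $\rho^\star$ is a closed interval whose left endpoint is exactly $\mathrm{VaR}(Z;\beta)=\inf\{\varphi:\Pr(Z\le\varphi)\ge\beta\}$. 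So $\rho=\mathrm{VaR}$ attains the minimum.

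\emph{Step 3: evaluate the minimum.} Substituting $\rho=\mathrm{VaR}$ gives
\[
F_\beta(\mathrm{VaR})=\mathrm{VaR}+\tfrac{1}{1-\beta}\mathbb{E}\big[(Z-\mathrm{VaR})\mathbf 1_{\{Z>\mathrm{VaR}\}}\big]
=\tfrac{1}{1-\beta}\mathbb{E}\big[Z\mathbf 1_{\{Z>\mathrm{VaR}\}}\big]+\mathrm{VaR}\Big(1-\tfrac{\Pr(Z>\mathrm{VaR})}{1-\beta}\Big).
\]
When the law of $Z$ has no atom at VaR, which is the case in the paper's setting (Gaussian/affine-in-$H$ losses with an absolutely continuous $\xi$), we have $\Pr(Z>\mathrm{VaR})=1-\beta$. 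The second term then vanishes, and the result is exactly $(1-\beta)^{-1}\int_{q>\rho}q\,p\,dy$, the CVaR definition.

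\emph{Main obstacle: atoms at VaR.} This is the one delicate step. If $\Pr(Z=\mathrm{VaR})>0$, the leftover term $\mathrm{VaR}\big(\Pr(Z\le\mathrm{VaR})-\beta\big)/(1-\beta)$ is nonzero. In that case the right-hand side of the lemma equals the Rockafellar--Uryasev \emph{generalized} CVaR, which splits the atom, rather than the naive tail integral. I would handle this by taking the minimization formula as the definition of CVaR in the atomic case, citing \cite{rockafellar2000optimization}, and noting that the two definitions coincide for continuous distributions, which is all the LQG formulations require.
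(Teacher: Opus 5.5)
Your proof is correct. The paper gives no proof of Lemma~\ref{lem:RU}; it cites the result as known from \cite{rockafellar2000optimization}. Your argument is essentially the proof given there: convexity of $F_\beta$, the derivative $1-\Pr(Z>\rho)/(1-\beta)$, $\mathrm{VaR}$ as the left endpoint of the interval of minimizers, then substitution. Rockafellar and Uryasev assume a continuous loss distribution, so that $F_\beta$ is continuously differentiable. Your one-sided derivatives make Steps~1--2 valid for every integrable $Z$, so the no-atom hypothesis enters only in the evaluation of Step~3.

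Three refinements on the atomic case follow.

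You are right that the lemma is false as literally stated (``for any random variable'') under the paper's tail-integral definition. For $Z\equiv c$ the tail integral is $0$, while the minimum is $c$. Your leftover term is only generically nonzero when there is an atom at $\mathrm{VaR}$: it vanishes when $\mathrm{VaR}=0$ or when $\Pr(Z\le\mathrm{VaR})=\beta$.

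Your claim that the paper's setting is atom-free is too strong. Under an absolutely continuous $\xi$, an affine functional of $H$ is atomless only if its linear part is nonzero, and the paper contains such a degenerate case. Under the contagion-suppression objective, $F$ does not depend on $H$, so $t-F\bullet X$ is deterministic. The tail-integral definition would then make \eqref{eq:orig-obj_cvar_phi2} vacuous: CVaR would equal $0$ for every $t$, and $t$ would be unbounded. The minimization formula instead gives the intended constraint $t\le F\bullet X$, which is exactly what the slack program of Theorem~\ref{theorem_cvar} implements. So taking the minimization formula as the definition of CVaR is what the paper's formulations actually require, not a mere technicality.

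Finally, the atom-splitting CVaR for general loss distributions comes from Rockafellar and Uryasev's 2002 follow-up paper. It is not in \cite{rockafellar2000optimization}, which assumes continuity, so cite the later paper for that step.
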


\noindent
This lemma allows each CVaR constraint to be expressed in terms of an auxiliary variable $\rho$ and linear inequalities involving slack variables. I also state the Sample Average Approximation of CVaR.

\begin{lemma}[Sample Average Approximation of CVaR]\label{lem:SAA}
Let $Z(H)$ be a random variable depending on random payoff matrix $H \sim \xi$,
and let $\{H_j\}_{j=1}^J$ be i.i.d.\ samples from $\xi$. 
For any $\beta \in (0,1)$, the CVaR constraint
\[
\mathrm{CVaR}_{\xi}(Z(H);\beta) \leq 0
\]
is approximated by the finite-sample program
\[
\exists \,\rho \in \mathbb{R}, \; \{z_j \ge 0\}_{j=1}^J \quad \text{s.t.} \quad 
z_j \ge Z(H_j)-\rho,\;\; j=1,\dots,J,
\]
\[
\rho + \nu \sum_{j=1}^J z_j \leq 0,
\]
where $\nu = \tfrac{1}{(1-\beta)J}$. 
As $J \to \infty$, this sample-average approximation converges almost surely to the exact CVaR constraint. 
\end{lemma}

The following matrices are also used in the definitions of the CVaR and probabilistic models: $M_{k,l}= [[M_{k,l}]_{i,j}]_{2nx2n} \in P^{2n},  k \in \mathcal{N}$ is given as: 
\begin{equation}\label{eq_M_matrix}
[M_{k,l}]_{i,j}=\begin{cases}
1/2 \quad\text{ if } k<l, i=n+k, j=n+l\\
1/2 \quad\text{ if } k<l, i=n+l, j=n+k\\
1 \quad\text{ if } k=l, i=n+k, j=n+l\\
0 \quad \text{otherwise,}
\end{cases} \qquad 
\end{equation}

and $R_{l}= [[R_{ l}]_{i,j}]_{2n\times 2n} \in P^{2n}, l \in \mathcal{N}$ is given as: 
\begin{equation}\label{eq_R}
[R_{l}]_{i,j}=\begin{cases}
H_{l,l}&if \quad  i = j = l,  \\
H_{l,j}/2  & if \quad  i = l, 1\leq j \leq n, j\neq l, \\
-1/2 &if \quad i = l, j= n + l,\\
H_{l,i}/2 &if\quad  j = l, 1\leq i \leq n, i\neq l \\
-1/2 &if \quad j = l, i= n + l,\\
0 & \text{otherwise.}
\end{cases}
\end{equation}
With this definition, and using the symmetry of $X$, one has $R_l\bullet X=\sum_{j\in\mathcal N}H_{l,j}X_{lj}-X_{l,n+l}=\Lambda^{(X)}_l(H)$, i.e.\ $R_l$ encodes \emph{row $l$} of $H$, which is exactly the row that enters player $l$'s payoff \eqref{utility}. In particular $H$ need not be symmetric: it is used as estimated, and the construction is faithful to the directional cross-impacts $H_{lj}\neq H_{jl}$ (only the symmetric part would matter had the $(j,l)$ entry used the column coefficient $H_{j,l}$ instead).

\begin{theorem}\label{theorem_cvar}
The CVaR information design problem \eqref{eq:orig-obj_cvar_phi}--\eqref{eq:bce-cond_cvar} can be written as the following convex program under a finite set of $J$ sampled payoff scenarios $\{F_j, R_{l,j}\}_{j=1}^J$: 
\begin{align}
&\max_{X \in P^{2n}_{+},\;t,\;\rho_{F},\;\rho^{+}_{l},\rho^{-}_{l},\;z_F,\;z^{+}_R,\,z^{-}_R} \quad t \label{model_cvar_final}\\ 
\text{s.t. }& z_{F,j} \ge t- F_{j}\bullet X - \rho_{F}, \quad j=1,\dots,J, \label{eq_linear_f_slack}\\
& \rho_{F} + \nu_F \sum_{j=1}^{J} z_{F,j} \leq 0, \label{eq_linear_f}\\
& z^{+}_{R,j,l} \ge R_{l,j}\bullet X - \varepsilon - \rho^{+}_{l}, \quad z^{-}_{R,j,l} \ge -R_{l,j}\bullet X - \varepsilon - \rho^{-}_{l}, \quad j=1,\dots,J,\; l \in \mathcal{N}, \label{eq_linear_R_slack}\\
& \rho^{+}_{l} + \nu_l \sum_{j=1}^{J} z^{+}_{R,j,l} \leq 0, \quad \rho^{-}_{l} + \nu_l \sum_{j=1}^{J} z^{-}_{R,j,l} \leq 0, \quad l \in \mathcal{N}, \label{eq_linear_R}\\
& M_{k,l}\bullet X = \mathrm{cov}(\gamma_{k},\gamma_{l}), \quad \forall k \leq l, \; k,l \in \mathcal{N}, \label{eq_cov_gamma}
\end{align}
where $\nu_F = \tfrac{1}{(1-\beta_F)J}$ and $\nu_l = \tfrac{1}{(1-\beta_l)J}$, $z_{F,j},z^{\pm}_{R,j,l}\ge0$, and the auxiliary variables satisfy $\rho_F = \mathrm{VaR}(t-F\bullet X;\beta_F)$, $\rho^{+}_l = \mathrm{VaR}(R_l\bullet X-\varepsilon;\beta_l)$, $\rho^{-}_l = \mathrm{VaR}(-R_l\bullet X-\varepsilon;\beta_l)$. The two slack families implement $\mathrm{CVaR}_\xi(R_l\bullet X-\varepsilon;\beta_l)\le0$ and $\mathrm{CVaR}_\xi(-R_l\bullet X-\varepsilon;\beta_l)\le0$, i.e.\ the two-sided tail control of the equilibrium residual $\Lambda^{(X)}_l(H)=R_l\bullet X$.
\end{theorem}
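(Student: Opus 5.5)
The plan is to assemble the program piece by piece from the objective constraint, the equilibrium constraints, and the prior-consistency constraints, using Lemma~\ref{lem:linear_cvar_X}, Lemma~\ref{lem:RU} and Lemma~\ref{lem:SAA}. Convexity will be checked at the end.

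\emph{Objective constraint.} I start from \eqref{eq:orig-obj_cvar_phi2}. By \eqref{eq_obj_exp}, under the centering convention $E_\phi[f]=F\bullet X$, where $F$ depends on the realized $H$ through the utilities (for welfare, $F$ collects $-H$ and the $\gamma$ cross terms). Hence the loss is $t-F(H)\bullet X$, which is affine in $(t,X)$ for each $H$. Lemma~\ref{lem:RU} turns $\mathrm{CVaR}_\xi(t-F\bullet X;\beta_F)\le 0$ into the existence of $\rho_F$ with $\rho_F+\frac{1}{1-\beta_F}E_\xi[(t-F\bullet X-\rho_F)_+]\le0$. Here I use that a minimum being $\le 0$ is the same as some feasible $\rho_F$ achieving $\le 0$, and that the minimizer is the VaR, which justifies the stated identification of $\rho_F$. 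Replacing $\xi$ by the empirical measure on $J$ scenarios via Lemma~\ref{lem:SAA} and epigraphing each $(\cdot)_+$ with $z_{F,j}\ge0$ gives \eqref{eq_linear_f_slack}--\eqref{eq_linear_f}.

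\emph{Equilibrium constraints.} By Lemma~\ref{lem:linear_cvar_X}, the obedience condition \eqref{eq:bce-cond_cvar} for the LQG game reduces to the two-sided tail constraints on $\Lambda^{(X)}_l(H)$, relaxed by a tolerance $\varepsilon\ge0$ in the same way as the band of Lemma~\ref{lem_prob}. The identity $R_l\bullet X=\Lambda^{(X)}_l(H)$ noted after \eqref{eq_R} rewrites these as $\mathrm{CVaR}_\xi(\pm R_l\bullet X-\varepsilon;\beta_l)\le0$. Applying the same Rockafellar--Uryasev and SAA step separately to each sign, with independent auxiliary variables $\rho_l^\pm$ and slacks $z^\pm_{R,j,l}$, yields \eqref{eq_linear_R_slack}--\eqref{eq_linear_R}.

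\emph{Corrector block and prior consistency.} The $G$-block constraint \eqref{eq_affected_inequality_app_cvar_2} involves no objective coupling with $X$. I expect to handle it by showing it separates and is satisfiable on its own, using, for instance, the diagonal linear corrector later certified in Theorem~\ref{thm:general_existence}. This lets it be dropped from the $X$-program without changing the optimal value. Prior consistency, $\mathrm{var}(\gamma)$ equal to the given $\Sigma$, is linear in $X$ via the matrices $M_{k,l}$ of \eqref{eq_M_matrix}: one checks directly that $M_{k,l}\bullet X=X_{n+k,n+l}$. This gives \eqref{eq_cov_gamma}. The Realizability Remark then guarantees that every $X\succeq0$ satisfying these equations is induced by some Gaussian $\zeta$, so optimizing over $X$ loses nothing relative to $\phi$.

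\emph{Convexity.} Every constraint is linear in $(X,t,\rho,z)$, and the cone $X\in P^{2n}_+$ is convex, so the result is an SDP. The main obstacle is the meaning of ``can be written as.'' The reduction in Lemma~\ref{lem:linear_cvar_X} is only a necessary implication, and SAA is only an approximation. I will therefore state the result as equivalence for the sampled (empirical) distribution $\xi_J$, with almost sure convergence to the true $\xi$ as $J\to\infty$ by Lemma~\ref{lem:SAA}. I will also make explicit that the equilibrium block is the second-moment relaxation produced by Lemma~\ref{lem:linear_cvar_X}.
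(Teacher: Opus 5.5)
Your proposal is correct and follows the paper's own proof. Both apply the Rockafellar--Uryasev representation, then SAA with epigraph slacks, to the objective CVaR and separately to each sign of the residual $R_l\bullet X=\Lambda^{(X)}_l(H)$, and finish with the linear covariance-consistency constraints $M_{k,l}\bullet X=X_{n+k,n+l}$. Your additional steps are sound refinements that the paper leaves implicit: dropping the $G$-block by separation, invoking realizability, and reading ``can be written as'' as an exact reformulation of the sampled, second-moment-relaxed problem. For the separation, use the trivial corrector $\alpha\equiv0$, which certifies the $G$-block at once ($\Lambda^{(G)}_i\equiv0$). The fixed-gain diagonal corrector of Theorem~\ref{thm:general_existence} is not a safe certificate, because at zero tolerance the two-sided CVaR constraint forces $\Lambda^{(G)}_i(H)=0$ almost surely.
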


\begin{proof}{Proof}
Begin with the CVaR constraints from Problem~\ref{prob_cvar_phi}, where the equilibrium equality residual is controlled in both tails (Lemma~\ref{lem:linear_cvar_X}):
\[
\mathrm{CVaR}_{\xi}(t-F\bullet X;\beta_F) \leq 0, 
\qquad
\mathrm{CVaR}_{\xi}\big(\pm(R_l \bullet X) - \varepsilon;\beta_l\big) \leq 0, \;\forall l \in \mathcal{N}.
\]

\paragraph{Step 1: Rockafellar--Uryasev representation.}
By Lemma~\ref{lem:RU} (Rockafellar--Uryasev \cite{rockafellar2000optimization}),
\[
\mathrm{CVaR}_{\xi}(Z;\beta) = \min_{\rho \in \mathbb{R}} \Big\{ \rho + \tfrac{1}{1-\beta}\, \mathbb{E}_{\xi}[(Z-\rho)_+]\Big\}.
\]
Applying this to $Z = t-F\bullet X$ and to $Z = \pm(R_l\bullet X) - \varepsilon$ yields equivalent constraints of the form
\[
\rho_F + \tfrac{1}{1-\beta_F}\, \mathbb{E}_{\xi}\big[(t-F\bullet X-\rho_F)_+\big] \leq 0,
\]
\[
\rho^{+}_l + \tfrac{1}{1-\beta_l}\, \mathbb{E}_{\xi}\big[(R_l\bullet X-\varepsilon-\rho^{+}_l)_+\big] \leq 0, 
\quad
\rho^{-}_l + \tfrac{1}{1-\beta_l}\, \mathbb{E}_{\xi}\big[(-R_l\bullet X-\varepsilon-\rho^{-}_l)_+\big] \leq 0, 
\quad l \in \mathcal{N}.
\]

\paragraph{Step 2: Sample Average Approximation.}
By Lemma~\ref{lem:SAA} (SAA of CVaR), replacing expectations by sample averages over $\{H_j\}_{j=1}^J$ gives:
\[
\rho_F + \nu_F \sum_{j=1}^J z_{F,j} \leq 0, 
\quad z_{F,j} \ge t-F_j\bullet X-\rho_F, \;\; z_{F,j}\ge 0,
\]
\[
\rho^{+}_l + \nu_l \sum_{j=1}^J z^{+}_{R,j,l} \leq 0, 
\quad z^{+}_{R,j,l} \ge R_{l,j}\bullet X-\varepsilon-\rho^{+}_l, \;\; z^{+}_{R,j,l}\ge 0,
\;\; \forall l \in \mathcal{N},
\]
\[
\rho^{-}_l + \nu_l \sum_{j=1}^J z^{-}_{R,j,l} \leq 0, 
\quad z^{-}_{R,j,l} \ge -R_{l,j}\bullet X-\varepsilon-\rho^{-}_l, \;\; z^{-}_{R,j,l}\ge 0,
\;\; \forall l \in \mathcal{N}.
\]

\paragraph{Step 3: Covariance consistency.}
Finally, to ensure the action distribution is consistent with the exogenous payoff state distribution, the following is imposed:
\[
M_{k,l}\bullet X = \mathrm{cov}(\gamma_k,\gamma_l), 
\quad \forall k \leq l,\; k,l \in \mathcal{N}.
\]

Collecting these constraints together gives the convex program
\eqref{model_cvar_final}--\eqref{eq_cov_gamma}. 
\qed
\end{proof}

\section{Chance-Constrained Information Design}\label{sec_chance}
The second-order cone (convex) reformulation of the chance-constrained information design problem defined in Problem~\ref{prob_chance_phi}.

\begin{theorem}\label{theorem_chance}
Suppose the payoff matrix $H$ is Gaussian,
\(
\mathrm{vec}(H) \sim \mathcal{N}(\mathrm{vec}(\bar H),\Sigma_H).
\)
Then the chance-constrained information design problem 
\eqref{eq:orig-obj_chance_phi}--\eqref{eq:bce-cond_chance}
is equivalent to the following second-order cone program (SOCP):
\begin{align}
&\max_{X \in P^{2n}_{+},\;t} \quad t \label{model_chance_socp}\\
\text{s.t. } \quad 
& F_\mu \bullet X - \Phi^{-1}(\beta_F)\,
  \Big\| \Sigma_H^{1/2} A_F^T \mathrm{vec}(X) \Big\|_2 
  \;\;\geq t, \label{eq_chance_obj_soc}\\
& R_{\mu,l} \bullet X + \Phi^{-1}\!\big(\tfrac{1+\beta_l}{2}\big)\,
  \Big\| \Sigma_H^{1/2} A_{R,l}^T \mathrm{vec}(X) \Big\|_2 
  \;\;\leq \varepsilon, 
  \quad l \in \mathcal{N}, \label{eq_chance_eq_soc}\\
& R_{\mu,l} \bullet X - \Phi^{-1}\!\big(\tfrac{1+\beta_l}{2}\big)\,
  \Big\| \Sigma_H^{1/2} A_{R,l}^T \mathrm{vec}(X) \Big\|_2 
  \;\;\geq -\varepsilon, 
  \quad l \in \mathcal{N}, \label{eq_chance_eq_soc_lower}\\
& M_{k,l}\bullet X = \mathrm{cov}(\gamma_k,\gamma_l), 
  \quad \forall k \leq l,\; k,l \in \mathcal{N}. \label{eq_chance_cov_soc}
\end{align}
Here $F_\mu = \mathbb{E}[F]$, $R_{\mu,l} = \mathbb{E}[R_l]$, 
and $A_F, A_{R,l}$ are the linear maps such that 
$\mathrm{vec}(F) = A_F \mathrm{vec}(H)+c_F$ and 
$\mathrm{vec}(R_l) = A_{R,l} \mathrm{vec}(H)+c_{R,l}$. Constraints \eqref{eq_chance_eq_soc}--\eqref{eq_chance_eq_soc_lower} are the two halves of the two-sided band $Pr_\xi(|\Lambda^{(X)}_l(H)|\le\varepsilon)\ge\beta_l$ of Lemma~\ref{lem_prob}: each half is enforced at confidence $\tfrac{1+\beta_l}{2}$, so by the union bound their conjunction guarantees the band at the prescribed level $\beta_l$ (with the one-sided choice $\Phi^{-1}(\beta_l)$ the band would instead hold at level $2\beta_l-1$).
\end{theorem}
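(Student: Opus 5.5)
The plan is to reduce each chance constraint to a deterministic quantile condition on a scalar Gaussian, and then to rewrite that condition as a second-order cone constraint. Everything depends on one observation: for fixed $X$, every random quantity in Problem~\ref{prob_chance_phi} is affine in $\mathrm{vec}(H)$.

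\emph{Affine Gaussian structure.} First, I use the definitions of $F$ and $R_l$ from \eqref{eq_R} to write $\mathrm{vec}(F)=A_F\mathrm{vec}(H)+c_F$ and $\mathrm{vec}(R_l)=A_{R,l}\mathrm{vec}(H)+c_{R,l}$. Each entry of $R_l$ is either a constant ($-1/2$, $0$) or one coefficient $H_{lj}$, scaled by $1$ or $1/2$, so the second identity is immediate. For the welfare objective \eqref{eq_soc_welfare}, $F$ is likewise affine in $H$. Then $F\bullet X=\mathrm{vec}(X)^\top(A_F\mathrm{vec}(H)+c_F)$. Under $\mathrm{vec}(H)\sim\mathcal N(\mathrm{vec}(\bar H),\Sigma_H)$ this is a univariate normal with mean $F_\mu\bullet X$ and standard deviation $\|\Sigma_H^{1/2}A_F^\top\mathrm{vec}(X)\|_2$. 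The same holds for $\Lambda^{(X)}_l(H)=R_l\bullet X$, with mean $R_{\mu,l}\bullet X$ and standard deviation $s_l(X):=\|\Sigma_H^{1/2}A_{R,l}^\top\mathrm{vec}(X)\|_2$.

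\emph{Objective constraint.} The constraint $\Pr_\xi(F\bullet X\ge t)\ge\beta_F$ holds iff $t\le F_\mu\bullet X-\Phi^{-1}(\beta_F)\,\|\Sigma_H^{1/2}A_F^\top\mathrm{vec}(X)\|_2$, which is \eqref{eq_chance_obj_soc}. To get this, I standardize the normal variable and invert $\Phi$. The degenerate case of zero standard deviation is handled directly. For $\beta_F\ge 1/2$ the coefficient $\Phi^{-1}(\beta_F)$ is nonnegative, so the set is a second-order cone in $(X,t)$.

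\emph{Equilibrium constraints.} By Lemma~\ref{lem_prob}, \eqref{eq:bce-cond_chance} is enforced through the two-sided band $\Pr_\xi(|R_l\bullet X|\le\varepsilon)\ge\beta_l$. I split the band into the one-sided events $\{R_l\bullet X\le\varepsilon\}$ and $\{R_l\bullet X\ge-\varepsilon\}$. Requiring each to hold with probability at least $(1+\beta_l)/2$ gives, by the same Gaussian quantile inversion, exactly \eqref{eq_chance_eq_soc} and \eqref{eq_chance_eq_soc_lower}. By the union bound (Bonferroni), each complement has probability at most $(1-\beta_l)/2$, so the conjunction has probability at least $\beta_l$. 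The covariance-consistency constraints \eqref{eq_chance_cov_soc} carry over from Theorem~\ref{theorem_cvar}, Step~3, and $X\succeq0$ is kept as a cone constraint. Collecting everything yields the stated conic program.

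\emph{Main obstacle.} The delicate point is the word ``equivalent.'' The objective constraint is an exact equivalence. The two-sided band, however, is only a sufficient (inner) approximation: the exact set $\{X:\Phi((\varepsilon-m)/s)-\Phi((-\varepsilon-m)/s)\ge\beta_l\}$ is generally nonconvex, and the Bonferroni split is conservative. I would therefore state equivalence with respect to the per-side formulation, namely each one-sided chance constraint at level $(1+\beta_l)/2$, and note that this formulation implies the band at level $\beta_l$; this matches the remark in the statement. A second, smaller point is convexity of \eqref{eq_chance_eq_soc_lower}. Written as $-\varepsilon-R_{\mu,l}\bullet X+\Phi^{-1}(\cdot)\,s_l(X)\le0$, it is again a norm bounded by an affine function, so it is convex. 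This requires $\Phi^{-1}((1+\beta_l)/2)\ge0$, which always holds, and $\Phi^{-1}(\beta_F)\ge0$, which holds for $\beta_F\ge1/2$; I would add $\beta_F\ge1/2$ as an explicit standing assumption.
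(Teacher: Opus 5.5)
Your proposal is correct and follows the paper's proof essentially step for step: the affine-in-$\mathrm{vec}(H)$ Gaussian representation of $F\bullet X$ and $R_l\bullet X$, quantile inversion, the Bonferroni split at level $\tfrac{1+\beta_l}{2}$, and covariance consistency. Your two caveats are also sound, since the paper's argument itself only shows that the SOC constraints \emph{imply} the band, and convexity of \eqref{eq_chance_obj_soc} does require $\Phi^{-1}(\beta_F)\ge 0$, i.e.\ $\beta_F\ge\tfrac12$; one side remark should be corrected, though: for $\beta_l>\tfrac12$ the exact two-sided set is in fact convex (it has the form $|m|\le h(s)$ with $h$ concave and nonincreasing, and your per-side constraints $|m|\le\varepsilon-\Phi^{-1}(\tfrac{1+\beta_l}{2})\,s$ are exactly its chord), so what blocks ``equivalence'' is the conservatism of the Bonferroni split, not nonconvexity.
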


\begin{proof}{Proof}
Start with the chance constraints; the equilibrium condition is the two-sided band of Lemma~\ref{lem_prob}:
\[
Pr_{\xi}(F\bullet X \ge t) \ge \beta_F, 
\qquad 
Pr_{\xi}\big(|R_l\bullet X| \le \varepsilon\big) \ge \beta_l, 
\;\; \forall l \in \mathcal{N}.
\]
By the union bound, $Pr_\xi(|R_l\bullet X|\le\varepsilon)\ge\beta_l$ is implied by the pair $Pr_\xi(R_l\bullet X\le\varepsilon)\ge\tfrac{1+\beta_l}{2}$ and $Pr_\xi(R_l\bullet X\ge-\varepsilon)\ge\tfrac{1+\beta_l}{2}$, which I reformulate below.

\paragraph{Step 1: Gaussian reformulation.}
For $Z \sim \mathcal{N}(\mu,\sigma^2)$, the inequalities 
$Pr(Z \ge t) \ge \beta$, $Pr(Z \le \varepsilon) \ge \beta'$, and $Pr(Z \ge -\varepsilon) \ge \beta'$
are equivalent to
\[
\mu - \Phi^{-1}(\beta)\sigma \ge t,
\qquad
\mu + \Phi^{-1}(\beta')\sigma \le \varepsilon,
\qquad
\mu - \Phi^{-1}(\beta')\sigma \ge -\varepsilon,
\]
where $\Phi^{-1}(\cdot)$ is the quantile function of the standard normal distribution and $\beta'=\tfrac{1+\beta_l}{2}$ for the equilibrium constraints.

\paragraph{Step 2: Variance representation.}
Since $F\bullet X$ and $R_l\bullet X$ are affine in $\mathrm{vec}(H)$, 
there exist matrices $A_F, A_{R,l}$ such that
\[
F\bullet X = (\mathrm{vec}(X))^T A_F \,\mathrm{vec}(H) + (\mathrm{vec}(X))^T c_F,
\]
\[
R_l\bullet X = (\mathrm{vec}(X))^T A_{R,l} \,\mathrm{vec}(H) + (\mathrm{vec}(X))^T c_{R,l}.
\]
Defining $w_F = A_F^T \mathrm{vec}(X)$ and $w_{R,l} = A_{R,l}^T \mathrm{vec}(X)$,
the variances are
\[
\mathrm{Var}[F\bullet X] = w_F^T \Sigma_H w_F = \big\|\Sigma_H^{1/2}w_F\big\|_2^2,
\]
\[
\mathrm{Var}[R_l\bullet X] = w_{R,l}^T \Sigma_H w_{R,l} = \big\|\Sigma_H^{1/2}w_{R,l}\big\|_2^2.
\]

\paragraph{Step 3: Substitution into chance constraints.}
The objective constraint becomes
\[
F_\mu \bullet X - \Phi^{-1}(\beta_F)\, \big\|\Sigma_H^{1/2} A_F^T \mathrm{vec}(X)\big\|_2 \ge t,
\]
and the equilibrium constraints become the two SOC halves
\[
R_{\mu,l} \bullet X + \Phi^{-1}\!\big(\tfrac{1+\beta_l}{2}\big)\, \big\|\Sigma_H^{1/2} A_{R,l}^T \mathrm{vec}(X)\big\|_2 \le \varepsilon l \in \mathcal{N}, 
\]
\[
R_{\mu,l} \bullet X - \Phi^{-1}\!\big(\tfrac{1+\beta_l}{2}\big)\, \big\|\Sigma_H^{1/2} A_{R,l}^T \mathrm{vec}(X)\big\|_2 \ge -\varepsilon, 
\quad l \in \mathcal{N}.
\]

\paragraph{Step 4: Covariance consistency.}
Finally, the same consistency condition as in the CVaR model is enforced:
\[
M_{k,l}\bullet X = \mathrm{cov}(\gamma_k,\gamma_l), \quad \forall k \le l.
\]

The resulting constraints \eqref{eq_chance_obj_soc}--\eqref{eq_chance_cov_soc} are second-order cone constraints, hence convex. 
\qed
\end{proof}

\begin{corollary}[Independent payoff coefficients]\label{cor:independent}
Suppose that the entries of $H$ are independent, i.e. 
\(
\Sigma_H = \mathrm{diag}(\{\sigma_{ij}^2\}_{i,j=1}^n).
\)
Then the variance terms in Theorem~\ref{theorem_chance} reduce to weighted $\ell_2$ norms:
\[
\mathrm{Var}[F\bullet X] 
= \sum_{i,j} \sigma_{ij}^2 \,\big((w_F)_{ij}\big)^2,
\qquad
\mathrm{Var}[R_l\bullet X] 
= \sum_{i,j} \sigma_{ij}^2 \,\big((w_{R,l})_{ij}\big)^2,
\]
where $w_F = A_F^T \mathrm{vec}(X)$ and $w_{R,l} = A_{R,l}^T \mathrm{vec}(X)$.
Equivalently,
\[
\big\|\Sigma_H^{1/2} w_F\big\|_2 
= \Big(\sum_{i,j} \sigma_{ij}^2 (w_F)_{ij}^2\Big)^{1/2}, 
\quad
\big\|\Sigma_H^{1/2} w_{R,l}\big\|_2 
= \Big(\sum_{i,j} \sigma_{ij}^2 (w_{R,l})_{ij}^2\Big)^{1/2}.
\]

Thus the SOCP reformulation of the chance-constrained problem 
takes the explicit form
\begin{align*}
&F_\mu \bullet X - \Phi^{-1}(\beta_F)
 \Big(\sum_{i,j} \sigma_{ij}^2 (w_F)_{ij}^2\Big)^{1/2} \;\;\geq t, \\
&R_{\mu,l} \bullet X + \Phi^{-1}\!\big(\tfrac{1+\beta_l}{2}\big)
 \Big(\sum_{i,j} \sigma_{ij}^2 (w_{R,l})_{ij}^2\Big)^{1/2} \;\;\leq \varepsilon,
\quad l \in \mathcal{N},\\
&R_{\mu,l} \bullet X - \Phi^{-1}\!\big(\tfrac{1+\beta_l}{2}\big)
 \Big(\sum_{i,j} \sigma_{ij}^2 (w_{R,l})_{ij}^2\Big)^{1/2} \;\;\geq -\varepsilon,
\quad l \in \mathcal{N},
\end{align*}
together with covariance consistency \eqref{eq_chance_cov_soc}.
\end{corollary}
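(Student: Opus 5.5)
The corollary is a direct specialization of Theorem~\ref{theorem_chance}, so I will plug the diagonal covariance into the quadratic forms the theorem already uses. No new probabilistic argument is needed. From Step~2 of that proof, each Frobenius product is an affine function of $\mathrm{vec}(H)$:
\[
F\bullet X = w_F^\top \mathrm{vec}(H) + c_F^\top \mathrm{vec}(X), \qquad R_l\bullet X = w_{R,l}^\top \mathrm{vec}(H) + c_{R,l}^\top \mathrm{vec}(X),
\]
with $w_F=A_F^\top\mathrm{vec}(X)$ and $w_{R,l}=A_{R,l}^\top\mathrm{vec}(X)$. Hence, for any $\Sigma_H$,
\[
\mathrm{Var}[F\bullet X]=w_F^\top\Sigma_H w_F, \qquad \mathrm{Var}[R_l\bullet X]=w_{R,l}^\top\Sigma_H w_{R,l}.
\]

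The first step is to index $\mathrm{vec}(H)$ by pairs $(i,j)$ and write $\Sigma_H=\mathrm{diag}(\sigma_{ij}^2)$. Then $w^\top\Sigma_H w=\sum_{i,j}\sigma_{ij}^2 w_{ij}^2$ is immediate, since the cross terms vanish. This gives the stated variance formulas.

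The second step is to identify the symmetric square root: $\Sigma_H^{1/2}=\mathrm{diag}(\sigma_{ij})$, with $\sigma_{ij}\ge0$. It follows that
\[
\|\Sigma_H^{1/2}w\|_2^2=\sum_{i,j}\sigma_{ij}^2 w_{ij}^2,
\]
and taking nonnegative square roots gives the norm identities.

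The third step is to substitute these norms into \eqref{eq_chance_obj_soc}--\eqref{eq_chance_eq_soc_lower}, keeping the multipliers $\Phi^{-1}(\beta_F)$ and $\Phi^{-1}(\tfrac{1+\beta_l}{2})$ and the covariance constraint \eqref{eq_chance_cov_soc} unchanged. The Gaussian hypothesis of Theorem~\ref{theorem_chance} still holds, because a diagonal $\Sigma_H$ is a special case. The equivalence with the chance-constrained problem, including the Bonferroni split of the two-sided band, therefore transfers verbatim.

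There is no real obstacle; the only point needing care is bookkeeping. The index ordering of $\mathrm{vec}$ must be consistent between $A_F$, $A_{R,l}$ and the diagonal of $\Sigma_H$, so that $(w)_{ij}$ is paired with $\sigma_{ij}$. I would state this convention explicitly. I would also note that if some $\sigma_{ij}=0$, that entry is deterministic and simply drops out of the sum, so the formulas still hold.
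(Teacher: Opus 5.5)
Your proposal is correct and matches the paper, which gives no separate proof and treats the corollary as the immediate substitution of the diagonal $\Sigma_H$ into the quadratic forms $w^\top\Sigma_H w$ from Step~2 of the proof of Theorem~\ref{theorem_chance}. One wording point: the Bonferroni split makes the two SOC halves \emph{sufficient} for the two-sided band rather than equivalent to it, so ``equivalence \ldots\ transfers verbatim'' overstates what Theorem~\ref{theorem_chance} delivers; this does not affect the corollary, which only asserts the explicit form of the SOCP.
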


\begin{remark}[Chance vs.\ CVaR constraints]
The chance-constrained and CVaR formulations offer complementary perspectives on robustness under payoff uncertainty. 
Chance constraints (Theorem~\ref{theorem_chance}) 
require that the objective and equilibrium conditions hold with high probability $\beta_F,\beta_i$, but do not account for the magnitude of violations in the remaining $(1-\beta)$ fraction of adverse scenarios. 
In contrast, CVaR constraints (Theorem~\ref{theorem_cvar}) explicitly penalize the tail of the distribution by bounding the average shortfall beyond the $\beta$-quantile. Thus, the chance-constrained model is less conservative and may achieve higher 
nominal objectives, while the CVaR model is more robust to extreme events but potentially at the cost of efficiency.
\end{remark}

\begin{proposition}[Feasibility of the design programs]\label{prop:feasibility}
Let $\bar H=\mathbb{E}_\xi[H]$ and $\Sigma=\mathrm{var}(\gamma)$, and write $\circ$ for the Hadamard product. If $\bar H\circ\Sigma$ is invertible, then the feasible regions of the CVaR program of Theorem~\ref{theorem_cvar} and the two-sided chance program of Theorem~\ref{theorem_chance} are nonempty for every tolerance $\varepsilon\ge\varepsilon^\star:=\kappa_\beta\max_i s_i$, where $\kappa_\beta=\Phi^{-1}(\tfrac{1+\beta}{2})$ for the chance design and $\kappa_\beta=\phi(\Phi^{-1}(\beta))/(1-\beta)$ for the CVaR design, $s_i=d_i\big(\sum_j\sigma_{ij}^2\Sigma_{ij}^2 d_j^2\big)^{1/2}$, and $d=(\bar H\circ\Sigma)^{-1}\mathrm{dg}(\Sigma)$ with $\mathrm{dg}(\cdot)$ the vector of diagonal entries. A diagonal linear policy $a=\mathrm{Diag}(d)\,\gamma$ certifies feasibility, and the analogous condition on $\bar H\circ\mathrm{var}(\theta)$ certifies the corrector channel.
\end{proposition}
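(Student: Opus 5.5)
The plan is to verify directly that the proposed diagonal linear policy is feasible. Set $a=\mathrm{Diag}(d)\gamma$ with $d=(\bar H\circ\Sigma)^{-1}\mathrm{dg}(\Sigma)$, which is well defined because $\bar H\circ\Sigma$ is invertible. The induced joint covariance is
\[
X=\begin{bmatrix}\mathrm{Diag}(d)\Sigma\mathrm{Diag}(d) & \mathrm{Diag}(d)\Sigma\\ \Sigma\mathrm{Diag}(d) & \Sigma\end{bmatrix}=\begin{bmatrix}\mathrm{Diag}(d)\\ I\end{bmatrix}\Sigma\begin{bmatrix}\mathrm{Diag}(d) & I\end{bmatrix}\succeq0.
\]
This $X$ is positive semidefinite, and its lower-right block equals $\Sigma$. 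Hence the covariance-consistency constraints \eqref{eq_cov_gamma} and \eqref{eq_chance_cov_soc} hold, and by the realizability remark $X$ comes from a Gaussian information structure. The entries needed are $X_{ij}=d_id_j\Sigma_{ij}$ and $X_{i,n+i}=d_i\Sigma_{ii}$.

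Next compute the residual $\Lambda^{(X)}_i(H)=R_i\bullet X=\sum_j H_{ij}d_id_j\Sigma_{ij}-d_i\Sigma_{ii}$, which is affine in $H$. Its mean is $d_i\big[(\bar H\circ\Sigma)d-\mathrm{dg}(\Sigma)\big]_i=0$ by the choice of $d$; this is where invertibility enters. Treating the entries of $H$ as independent with variances $\sigma_{ij}^2$ (the setting of Corollary~\ref{cor:independent}), the deviation $\Lambda_i-\mathbb{E}\Lambda_i=\sum_j (H_{ij}-\bar H_{ij})d_id_j\Sigma_{ij}$ has standard deviation exactly $s_i$, taking $|d_i|$ when $d_i$ may be negative. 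So under Gaussian $H$ each residual is $\mathcal N(0,s_i^2)$.

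Then I would check each program in turn.

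For the chance program, the SOC constraints \eqref{eq_chance_eq_soc}--\eqref{eq_chance_eq_soc_lower} reduce to $\pm\Phi^{-1}(\tfrac{1+\beta}{2})s_i\le\varepsilon$. These hold whenever $\varepsilon\ge\varepsilon^\star$.

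For the CVaR program, for $Z\sim\mathcal N(0,s^2)$ one has $\mathrm{CVaR}(\pm Z-\varepsilon;\beta)=s\,\phi(\Phi^{-1}(\beta))/(1-\beta)-\varepsilon$. This is the standard Gaussian CVaR formula, and by symmetry it is the same for both signs. It is $\le0$ exactly when $\varepsilon\ge\kappa_\beta s_i$. In the SAA version of Theorem~\ref{theorem_cvar}, I would take $\rho^\pm_l$ equal to the exact VaR and note that feasibility is exact in the population version, with the SAA recovering it as $J\to\infty$ by Lemma~\ref{lem:SAA}. Alternatively, I would state feasibility for the population CVaR constraint, since the sampled version can fail for finite $J$.

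The objective constraints cause no difficulty. The variable $t$ is free, so choosing $t$ sufficiently negative (for example $t=F_\mu\bullet X-\Phi^{-1}(\beta_F)\|\cdot\|$, or the analogous CVaR value) satisfies \eqref{eq_chance_obj_soc} and \eqref{eq_linear_f_slack}--\eqref{eq_linear_f}.

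The corrector channel repeats the same argument with $\alpha=\mathrm{Diag}(d^G)\theta$, $d^G=(\bar H\circ\mathrm{var}(\theta))^{-1}\mathrm{dg}(\mathrm{var}(\theta))$, and $G$ in place of $X$. One subtlety is that $H_{ii}=\theta_i$ is then correlated with $G$'s defining variables, but the constraint is still affine in $H$ for fixed $G$.

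I expect the main obstacle to be the variance computation, for two reasons. First, Proposition~\ref{prop:feasibility} is stated with $\sigma_{ij}$ and so implicitly assumes independent entries, so I would state that assumption explicitly or replace $s_i$ by $\|\Sigma_H^{1/2}A_{R,i}^T\mathrm{vec}(X)\|$ in general. Second, the CVaR formula needs Gaussianity of $H$, because for a non-Gaussian $\xi$ the multiplier $\kappa_\beta$ would differ. I would handle this by assuming Gaussian $\mathrm{vec}(H)$, as in Theorem~\ref{theorem_chance}, for both designs.
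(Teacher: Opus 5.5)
Your proposal is correct and follows the paper's route: the paper proves the proposition by invoking parts (a) and (c) of Theorem~\ref{thm:general_existence}, with the same diagonal policy $a=\mathrm{Diag}(d)\gamma$, the same PSD factorization of $X$, the mean-zero residual with standard deviation $s_i$, and substitution into the Gaussian chance and CVaR reformulations. Your extra care addresses points the paper's proof passes over silently: you write $|d_i|$ in $s_i$ (the paper's $s_i$ is negative when some $d_i<0$), state the independent Gaussian assumption on $H$ explicitly, choose $t$ to satisfy the objective constraint, and note that the finite-$J$ SAA constraint of Theorem~\ref{theorem_cvar} is guaranteed only for the population constraint or as $J\to\infty$, not for every finite $J$.
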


\begin{proof}{Proof}
The claim is exactly parts (a) and (c) of Theorem~\ref{thm:general_existence} (Appendix~\ref{app:Cal-BCE_existence}), specialized to the action channel. That theorem constructs the diagonal certifying policy $a=\mathrm{Diag}(d)\,\gamma$ with $d=(\bar H\circ\Sigma)^{-1}\mathrm{dg}(\Sigma)$, which is well defined precisely when $\bar H\circ\Sigma$ is invertible; it shows the resulting obedience residual is mean-zero, $\mathbb{E}_\xi[\Lambda^{(X)}_i(H)]=0$, with standard deviation $s_i=d_i(\sum_j\sigma_{ij}^2\Sigma_{ij}^2 d_j^2)^{1/2}$. Substituting this Gaussian residual into the two-sided chance and CVaR reformulations of Theorems~\ref{theorem_cvar} and~\ref{theorem_chance} gives feasibility whenever $\varepsilon\ge\kappa_\beta\max_i s_i=\varepsilon^\star$. The corrector channel is identical with $(\,b,\Psi\,)$ in place of $(\,d,\Sigma\,)$, giving the stated condition on $\bar H\circ\mathrm{var}(\theta)$. \qed
\end{proof}

\begin{theorem}[Joint decentralization of the equilibrium map]\label{thm:decentralization}
Consider the design with Gaussian, independently uncertain coefficients, $\Sigma_H=\mathrm{diag}(\{\sigma_{ij}^2\})$, under either the CVaR constraints of Theorem~\ref{theorem_cvar} or the two-sided chance constraints of Theorem~\ref{theorem_chance}, with tolerance $\varepsilon\ge0$ and confidence $\beta\in(\tfrac12,1)$. Write
\[
m_i:=\mathbb{E}_\xi\big[\Lambda^{(X)}_i(H)\big]=\sum_{j}\bar H_{ij}X_{ij}-X_{i,n+i},
\qquad
s_i:=\Big(\sum_{j}\sigma_{ij}^2\, X_{ij}^2\Big)^{1/2},
\]
and let $\kappa_\beta$ denote the per-constraint multiplier: $\kappa_\beta^{\mathrm{chance}}=\Phi^{-1}\!\big(\tfrac{1+\beta}{2}\big)$ for the two-sided chance design, and $\kappa_\beta^{\mathrm{CVaR}}=\phi\big(\Phi^{-1}(\beta)\big)/(1-\beta)$ for the CVaR design. Then:
\begin{enumerate}
\item[(a)] \emph{(Common decentralization bound; both designs.)} Every feasible action covariance $X$ of \emph{either} formulation satisfies, for each sector $i$,
\begin{equation}\label{eq:decentralization_bound}
\kappa_\beta\, s_i \;\le\; \varepsilon-|m_i| \;\le\; \varepsilon,
\qquad\text{hence}\qquad
|X_{ij}|\;\le\;\frac{s_i}{\sigma_{ij}}\;\le\;\frac{\varepsilon}{\kappa_\beta\,\sigma_{ij}}\quad\text{for every }j\text{ with }\sigma_{ij}>0,
\end{equation}
with $\kappa_\beta=\kappa_\beta^{\mathrm{chance}}$ or $\kappa_\beta^{\mathrm{CVaR}}$ as appropriate. In particular, for each cross-agent pair $i\neq j$ with $\sigma_{ij}>0$, $|X_{ij}|\to 0$ as $\beta\to1$ ($\kappa_\beta\to\infty$) or $\varepsilon\to0$: the block $X_{aa}=\mathrm{var}(a)$ becomes diagonal and equilibrium actions decouple across agents. The cap scales as $1/\sigma_{ij}$, so entries tied to highly uncertain coefficients are suppressed first; when cross-impacts are less precisely identified than own-impacts ($\sigma_{ij}\ge\sigma_{ii}$, $j\neq i$), $X_{aa}$ is driven to be diagonally dominant.
\item[(b)] \emph{(Matched-tolerance ordering.)} Suppose $\beta\in[\tfrac12,1)$. Then
\[
\kappa_\beta^{\mathrm{CVaR}}\ \ge\ \kappa_\beta^{\mathrm{chance}}\ \ge\ \Phi^{-1}(\beta),
\]
so at a common tolerance $\varepsilon$ and confidence $\beta$ the CVaR design caps every off-diagonal covariance $X_{ij}$ ($i\neq j$) at least as tightly as the two-sided chance design, which is in turn tighter than the unconstrained baseline.

\item[(c)] \emph{(Realized ordering via the feasibility threshold.)} The two formulations need not---and in general do not---operate at the same $\varepsilon$. Each is feasible only above its own threshold $\varepsilon^\star_{(\cdot)}=\kappa_\beta^{(\cdot)}\max_i s_i^{\star}$ (Proposition~\ref{prop:feasibility}), where $s_i^{\star}$ is the residual standard deviation at the certifying policy. Writing $\varepsilon_{\mathrm{ch}},\varepsilon_{\mathrm{cv}}$ for the tolerances actually used, the realized caps compare as
\[
\frac{\varepsilon_{\mathrm{cv}}}{\kappa_\beta^{\mathrm{CVaR}}\,\sigma_{ij}}
\;\;\gtrless\;\;
\frac{\varepsilon_{\mathrm{ch}}}{\kappa_\beta^{\mathrm{chance}}\,\sigma_{ij}}
\quad\Longleftrightarrow\quad
\frac{\varepsilon_{\mathrm{cv}}}{\varepsilon_{\mathrm{ch}}}\;\;\gtrless\;\;\frac{\kappa_\beta^{\mathrm{CVaR}}}{\kappa_\beta^{\mathrm{chance}}}.
\]
Hence the matched-tolerance advantage of CVaR in (b) is realized only when the CVaR design is run at a tolerance no larger than $\big(\kappa_\beta^{\mathrm{CVaR}}/\kappa_\beta^{\mathrm{chance}}\big)\,\varepsilon_{\mathrm{ch}}$. Because the larger CVaR multiplier raises its feasibility floor $\varepsilon^\star_{\mathrm{CVaR}}$, a design operating at its own threshold may instead satisfy $\varepsilon_{\mathrm{cv}}/\varepsilon_{\mathrm{ch}}>\kappa_\beta^{\mathrm{CVaR}}/\kappa_\beta^{\mathrm{chance}}$, in which case the probabilistic design attains the tighter realized cap. The empirical ordering is therefore governed by the threshold ratio $\varepsilon^\star_{\mathrm{CVaR}}/\varepsilon^\star_{\mathrm{chance}}$ and is calibration-dependent.
\end{enumerate}
\end{theorem}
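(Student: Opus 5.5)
The plan is to reduce every feasibility constraint, in both designs, to a statement about the single Gaussian scalar $\Lambda^{(X)}_i(H)=R_i\bullet X$. Under $\Sigma_H=\mathrm{diag}(\{\sigma_{ij}^2\})$ and the definition of $R_i$ (row $i$ of $H$ paired with row $i$ of the $a$-block of $X$), $\Lambda^{(X)}_i(H)$ is affine in the independent Gaussian entries $H_{i1},\dots,H_{in}$ with coefficients $X_{i1},\dots,X_{in}$. Hence $\Lambda^{(X)}_i(H)\sim\mathcal N(m_i,s_i^2)$ with $m_i,s_i$ exactly as defined in the statement, which is Corollary~\ref{cor:independent} specialized to $w_{R,i}$.

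\textbf{Part (a), chance design.} The two halves \eqref{eq_chance_eq_soc}--\eqref{eq_chance_eq_soc_lower} of Theorem~\ref{theorem_chance} read $m_i+\kappa^{\mathrm{chance}}_\beta s_i\le\varepsilon$ and $m_i-\kappa^{\mathrm{chance}}_\beta s_i\ge-\varepsilon$. Combining them gives $|m_i|+\kappa_\beta s_i\le\varepsilon$.

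\textbf{Part (a), CVaR design.} I would work with the exact (population) CVaR constraints behind Theorem~\ref{theorem_cvar} rather than the SAA. For a Gaussian $Z\sim\mathcal N(\mu,s^2)$ one has the closed form $\mathrm{CVaR}(Z;\beta)=\mu+s\,\phi(\Phi^{-1}(\beta))/(1-\beta)$. Applied to $\pm\Lambda_i-\varepsilon$, the two constraints become $\pm m_i+\kappa^{\mathrm{CVaR}}_\beta s_i\le\varepsilon$, which is the same inequality. From $s_i^2\ge\sigma_{ij}^2X_{ij}^2$ I then get $|X_{ij}|\le s_i/\sigma_{ij}\le\varepsilon/(\kappa_\beta\sigma_{ij})$. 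The limits $\beta\to1$ and $\varepsilon\to0$ follow at once, since $\kappa_\beta\to\infty$ for both multipliers. For diagonal dominance I would compare $\sigma_{ij}|X_{ij}|\le s_i$ against the same bound for the diagonal entry. I will be careful here and phrase this only as the stated cap-ordering consequence: the cap on the cross entries is no looser than the cap on $X_{ii}$ when $\sigma_{ij}\ge\sigma_{ii}$.

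\textbf{Part (b), the multiplier ordering.} This is the main obstacle. Write $z=\Phi^{-1}(\beta)\ge0$ and $z'=\Phi^{-1}((1+\beta)/2)$.

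The second inequality $z'\ge z$ is monotonicity of $\Phi^{-1}$, since $(1+\beta)/2\ge\beta$.

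For the first, $\phi(z)/(1-\Phi(z))\ge z'$, I would use the Mills-ratio view. Since $1-\Phi(z')=(1-\beta)/2$, it suffices to show $\phi(z)\ge z'(1-\beta)$. The first route I would try is the bound on the Gaussian tail above $z$:
\[
\mathbb E[Z\mid Z>z]\ \ge\ \text{the median of } Z \text{ conditional on } Z>z .
\]
The conditional median is exactly $z'$, because $\Pr(Z>z')=\tfrac12\Pr(Z>z)$. The inequality "conditional mean $\ge$ conditional median" holds for the truncated normal tail because its density $\phi(x)/(1-\beta)$ on $(z,\infty)$ is decreasing for $z\ge0$, so the distribution is right-skewed. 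A clean way to prove mean $\ge$ median for a decreasing density on a half-line is to compare the mass below and above the median via a rearrangement. If this gets delicate, the fallback is a direct numerical/analytic check on $[0.9,0.99]$, as in the contributions section.

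With (b) in hand, the tighter-cap claim follows by substituting into (a).

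\textbf{Part (c).} This is algebraic. Given the caps $\varepsilon_{\mathrm{cv}}/(\kappa^{\mathrm{CVaR}}_\beta\sigma_{ij})$ and $\varepsilon_{\mathrm{ch}}/(\kappa^{\mathrm{chance}}_\beta\sigma_{ij})$ from (a), cross-multiplying by positive quantities gives the stated equivalence. Proposition~\ref{prop:feasibility} supplies the thresholds $\varepsilon^\star=\kappa_\beta\max_i s_i^\star$, and I would note that the stated orderings then follow. In particular, at $\varepsilon=\varepsilon^\star$ the ratio equals $\kappa^{\mathrm{CVaR}}/\kappa^{\mathrm{chance}}$, so the caps coincide exactly at threshold, and any deviation from threshold calibration decides the ordering.
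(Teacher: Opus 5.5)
Your proof is correct. For (a) and (c) it follows the paper's argument: the Gaussian residual $\Lambda^{(X)}_i(H)\sim\mathcal N(m_i,s_i^2)$, the two SOC halves (respectively the Gaussian closed form of CVaR applied to $\pm\Lambda^{(X)}_i-\varepsilon$), the bound $\sigma_{ij}|X_{ij}|\le s_i$, and cross-multiplication for (c).

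The only real difference is in (b). You read $\kappa^{\mathrm{CVaR}}_\beta\ge\kappa^{\mathrm{chance}}_\beta$ as ``conditional mean $\ge$ conditional median'' for the tail $Z\mid Z>\Phi^{-1}(\beta)$. The paper instead writes $\kappa^{\mathrm{CVaR}}_\beta=\frac{1}{1-\beta}\int_\beta^1\Phi^{-1}(u)\,du$ and applies Hermite--Hadamard, using convexity of $\Phi^{-1}$ on $[\tfrac12,1)$. These are the same inequality:
\begin{itemize}
\item the tail's quantile function is $Q(u)=\Phi^{-1}(\beta+(1-\beta)u)$;
\item its mean is $\int_0^1 Q=\kappa^{\mathrm{CVaR}}_\beta$ and its median is $Q(\tfrac12)=\kappa^{\mathrm{chance}}_\beta$;
\item a decreasing density on $(\Phi^{-1}(\beta),\infty)$ is equivalent to $Q'(u)=(1-\beta)/\phi(Q(u))$ being increasing, i.e.\ $Q$ convex.
\end{itemize}
Your probabilistic reading is more intuitive. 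However, you leave the ``rearrangement'' step open, and ``right-skewed'' is not by itself a proof. The cleanest way to close it is Jensen or Hermite--Hadamard applied to $Q$, which is exactly the paper's argument. Your numerical fallback on $[0.9,0.99]$ would not establish the claimed range $[\tfrac12,1)$.

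Two remarks in your favour. First, your hedged reading of the ``diagonally dominant'' claim is appropriate. A comparison of caps does not imply $|X_{ii}|\ge\sum_{j\neq i}|X_{ij}|$.

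Second, your closing observation on (c) is correct and sharper than the paper's proof. Both thresholds are built from the same certifying residuals $s_i^\star$, so $\varepsilon^\star_{\mathrm{CVaR}}/\varepsilon^\star_{\mathrm{chance}}=\kappa^{\mathrm{CVaR}}_\beta/\kappa^{\mathrm{chance}}_\beta$ exactly, and the two realized caps tie when each design runs at its own threshold. A strict reversal therefore requires the CVaR design to sit further above its floor in relative terms. The paper's step ``substituting $\varepsilon_{(\cdot)}=\varepsilon^\star_{(\cdot)}$'' does not by itself produce one.
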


\begin{proof}{Proof}
Under $\Sigma_H=\mathrm{diag}(\{\sigma_{ij}^2\})$ and Gaussian $H$, the affine functional $\Lambda^{(X)}_i(H)=\sum_j H_{ij}X_{ij}-X_{i,n+i}$ is Gaussian with mean $m_i$ and variance $\sum_j \sigma_{ij}^2 X_{ij}^2=s_i^2$ (the weighted-$\ell_2$ form of Corollary~\ref{cor:independent} applied to $R_l\bullet X=\Lambda^{(X)}_l(H)$).

\emph{(a)} \emph{Chance design.} The two halves of the band require $Pr_\xi(\Lambda^{(X)}_i\le\varepsilon)\ge\tfrac{1+\beta}{2}$ and $Pr_\xi(\Lambda^{(X)}_i\ge-\varepsilon)\ge\tfrac{1+\beta}{2}$, i.e.\ $m_i+\kappa_\beta^{\mathrm{chance}} s_i\le\varepsilon$ and $-m_i+\kappa_\beta^{\mathrm{chance}} s_i\le\varepsilon$. Each implies $\kappa_\beta^{\mathrm{chance}} s_i\le\varepsilon-|m_i|$. \emph{CVaR design.} For $Z\sim\mathcal N(m,s^2)$ one has $\mathrm{CVaR}_\beta(Z)=m+\kappa_\beta^{\mathrm{CVaR}} s$; the two-sided constraints $\mathrm{CVaR}_\beta(\pm\Lambda^{(X)}_i-\varepsilon)\le0$ read $m_i+\kappa_\beta^{\mathrm{CVaR}} s_i\le\varepsilon$ and $-m_i+\kappa_\beta^{\mathrm{CVaR}} s_i\le\varepsilon$, giving $\kappa_\beta^{\mathrm{CVaR}} s_i\le\varepsilon-|m_i|$. In both cases $\kappa_\beta s_i\le\varepsilon-|m_i|\le\varepsilon$, which is \eqref{eq:decentralization_bound}; the per-entry bound follows from $\sigma_{ij}|X_{ij}|\le s_i$, and the limit and selective-suppression statements are immediate.

\emph{(b)} For the right inequality, $\tfrac{1+\beta}{2}\ge\beta$ whenever $\beta\le1$, and $\Phi^{-1}$ is increasing, so $\kappa^{\mathrm{chance}}_\beta=\Phi^{-1}(\tfrac{1+\beta}{2})\ge\Phi^{-1}(\beta)$. For the left inequality, use the expected-shortfall representation of the standardized CVaR multiplier,
\[
\kappa^{\mathrm{CVaR}}_\beta=\frac{\phi(\Phi^{-1}(\beta))}{1-\beta}=\frac{1}{1-\beta}\int_\beta^1\Phi^{-1}(u)\,du,
\]
where the second equality follows from $\int x\phi(x)\,dx=-\phi(x)$ under $u=\Phi(x)$; thus $\kappa^{\mathrm{CVaR}}_\beta$ is the mean of the quantile function $\Phi^{-1}$ over $[\beta,1]$. Differentiating $\Phi^{-1}$ twice gives $(\Phi^{-1})''(u)=\Phi^{-1}(u)/\phi(\Phi^{-1}(u))^2$, which is nonnegative for $u\ge\tfrac12$; hence $\Phi^{-1}$ is convex on $[\beta,1]\subseteq[\tfrac12,1)$. By the Hermite--Hadamard inequality, the mean of a convex function over an interval is at least its value at the midpoint $\tfrac{\beta+1}{2}$:
\[
\kappa^{\mathrm{CVaR}}_\beta=\frac{1}{1-\beta}\int_\beta^1\Phi^{-1}(u)\,du\ \ge\ \Phi^{-1}\!\Big(\tfrac{\beta+1}{2}\Big)=\kappa^{\mathrm{chance}}_\beta .
\]
Combining, $\kappa^{\mathrm{CVaR}}_\beta\ge\kappa^{\mathrm{chance}}_\beta\ge\Phi^{-1}(\beta)$ for all $\beta\in[\tfrac12,1)$. At common $(\varepsilon,\beta)$ the per-entry caps $\varepsilon/(\kappa_\beta\sigma_{ij})$ are ordered inversely to the multipliers. 

\emph{(c)} The feasibility thresholds $\varepsilon^\star_{(\cdot)}=\kappa_\beta^{(\cdot)}\max_i s_i^\star$ follow from Proposition~\ref{prop:feasibility}. Dividing the two realized caps $\varepsilon_{\mathrm{cv}}/(\kappa_\beta^{\mathrm{CVaR}}\sigma_{ij})$ and $\varepsilon_{\mathrm{ch}}/(\kappa_\beta^{\mathrm{chance}}\sigma_{ij})$ gives the stated equivalence, and substituting $\varepsilon_{(\cdot)}=\varepsilon^\star_{(\cdot)}$ shows the threshold-ratio dependence. \qed
\end{proof}

\section{Numerical Experiments}
\label{sec:experiments}

This section presents numerical experiments evaluating the tractability of the convex reformulations developed in Sections~\ref{sec_cvar}-\ref{sec_chance} and to compare welfare and equilibrium properties across formulations.  

\subsection{Experimental Setup}
The investment game from Example~\ref{ex:investment} is implemented, where each agent corresponds to a sector exchange traded fund (ETF).  
Daily price and volume data for U.S. sector ETFs (2015-2024) are used to estimate the payoff matrix $H$, which encodes both self-costs and cross-impacts between sectors.  

The matrix $H$ is estimated using ridge regression, which stabilizes coefficient estimates in the presence of strong collinearity across sector returns.  
This shrinkage is particularly important in financial panel data, where sector exposures are highly correlated and naive least-squares estimates can be unstable.  
To capture statistical variability, a block bootstrap procedure resamples returns in blocks of trading days, thereby preserving temporal dependence structures such as volatility clustering.  
The empirical covariance of returns provides an estimate of $\mathrm{var}(\gamma)$, while bootstrap variability in regression coefficients yields a distributional model $\xi$ for $H$.

Consistent with the theory of Section~\ref{sec:lqg}, the uncertain primitive is the payoff matrix $H\sim\xi$, and the per-agent uncertainty state is $\theta=\mathrm{diag}(H)$, whose covariance $\mathrm{var}(\theta)$ is read off the bootstrap covariance of the own-coefficients $\{H_{ii}\}$. The action-channel design variable $X$ is obtained by solving the CVaR and chance-constrained programs of Theorems~\ref{theorem_cvar} and~\ref{theorem_chance}, in which the \emph{full} (including off-diagonal) uncertainty in $H$ enters the robust equilibrium constraints; the off-diagonal coefficients are therefore never treated as known. The corrector covariance $G$ is not optimized separately: by Theorem~\ref{thm:general_existence}(d), the diagonal calibration $b_i=1/H_{ii}$ satisfies the corrector-channel constraint \eqref{eq_affected_inequality_app_cvar_2} of Lemma~\ref{lem:linear_cvar_X} exactly for every realization of $H$, so equilibrium consistency on the corrector channel holds by construction once $X$ is designed.

From these estimates, $J=1500$ scenarios of $(H,F)$ are generated, and both chance-constrained and CVaR-based formulations are solved.  
Unless otherwise specified, parameters are set to $\beta_F=0.95$ and $\beta_i=0.95$.
All semidefinite programs are implemented in \texttt{CVXPY} and solved using MOSEK.  

\subsection{Models Compared}
Results are reported for three cases:
\begin{enumerate}
    \item \textbf{Baseline (K0):} welfare under the empirical distribution without intervention.  
    \item \textbf{Probabilistic information design:} constraints enforced with 95\% probability under $\xi$.
    \item \textbf{CVaR information design:} constraints enforced in a tail-risk sense using 95\% CVaR.
\end{enumerate}

Each formulation produces an optimal action covariance $X^\star$ and an equilibrium map $K$. Concretely, from $X^\star$ one recover $K$ as the best linear predictor of equilibrium actions from payoff states, $K=\mathrm{cov}(a,\gamma)\,\mathrm{var}(\gamma)^{-1}$, evaluated on the corresponding blocks of $X^\star$.  

\subsection{Welfare Outcomes}
Table~\ref{tab:welfare_summary} summarizes normalized welfare across $4{,}000$ evaluation scenarios, with welfare divided by $\mathrm{tr}(K\Sigma K^\top)$ to remove action-scale dependence. All three designs achieve positive mean welfare, but they differ sharply in the tail. The baseline retains a negative CVaR@95\% ($-6.9\times10^{-6}$), exposing residual downside in the worst $5\%$ of payoff draws. The probabilistic design eliminates negative realizations entirely ($P(w>0)=1$) and raises mean welfare roughly $26\times$, though with correspondingly higher variance. The CVaR design occupies the middle ground: about a $1.6\times$ gain in mean welfare over the baseline, together with a strictly positive CVaR@95\% ($9.3\times10^{-6}$) and $P(w>0)=99.3\%$---robust tail protection achieved by penalizing worst-case welfare losses directly rather than by suppressing activity.

\begin{table}[h]
\centering
\caption{Summary statistics of normalized welfare distributions across $4{,}000$ evaluation scenarios. Welfare is divided by $\mathrm{tr}(K\Sigma K^\top)$ to remove action-scale dependence. CVaR@95\% reports the expected welfare in the worst 5\% of payoff draws. Parameters: $\beta_F=\beta_i=0.95$, $J=1500$ optimization scenarios, MOSEK solver.}
\label{tab:welfare_summary}
\small
\begin{tabular}{lrrrrrr}
\toprule
Model & Mean & Std & Median & 5\% Quantile & CVaR@95\% & $P(\text{welfare}>0)$ \\
\midrule
Baseline ($K_0$)  & $8.18 \times 10^{-5}$ & $9.53 \times 10^{-5}$ & $5.09 \times 10^{-5}$ & $8.30 \times 10^{-6}$ & $-6.86 \times 10^{-6}$ & $0.983$ \\
Probabilistic     & $2.12 \times 10^{-3}$ & $1.88 \times 10^{-3}$ & $1.54 \times 10^{-3}$ & $4.45 \times 10^{-4}$ & $3.44 \times 10^{-4}$  & $1.000$ \\
CVaR              & $1.28 \times 10^{-4}$ & $1.21 \times 10^{-4}$ & $9.03 \times 10^{-5}$ & $1.94 \times 10^{-5}$ & $9.28 \times 10^{-6}$  & $0.993$ \\
\bottomrule
\end{tabular}
\end{table}

\begin{figure}[t]
\centering
\includegraphics[width=0.85\linewidth]{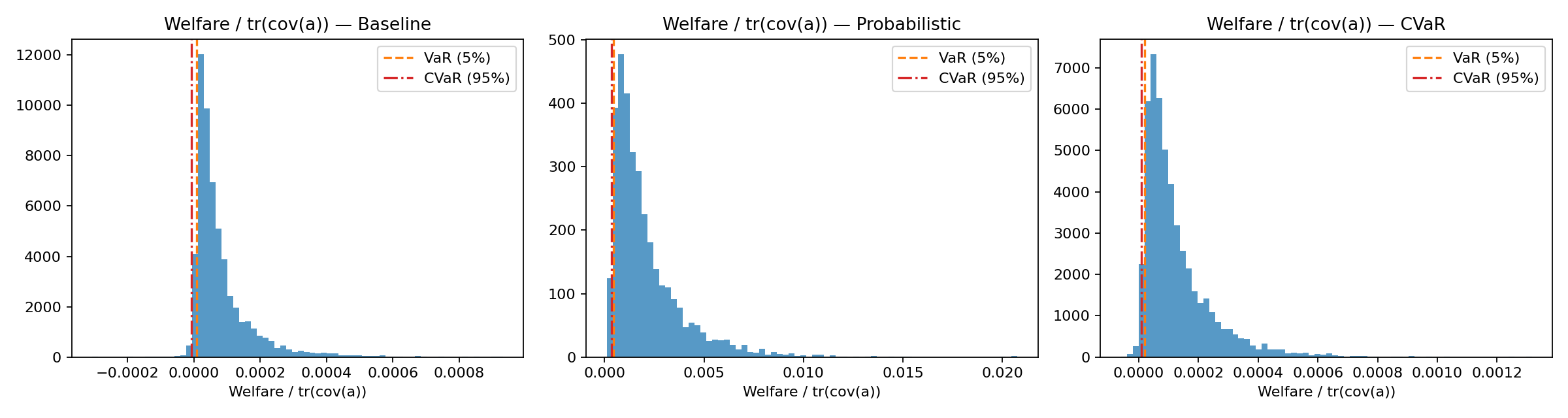}
\caption{Normalized welfare distributions across $4{,}000$ evaluation scenarios under three
formulations ($\beta=0.95$).
Dashed orange lines mark the 5\% VaR; red dash-dotted lines mark the CVaR@95\%.
\emph{Left:} Baseline ($K_0$), which achieves positive mean welfare ($8.2\times10^{-5}$) but
retains a small negative left tail (CVaR@95\%$=-6.9\times10^{-6}$), reflecting residual
downside exposure from unmanaged cross-sector spillovers.
\emph{Middle:} Probabilistic design, which raises mean welfare by approximately $26\times$
over the baseline ($2.1\times10^{-3}$) with all realizations positive ($P(w>0)=1.0$),
exploiting cross-sector co-movements at the cost of high variance.
\emph{Right:} CVaR design, which achieves an intermediate mean ($1.3\times10^{-4}$),
strictly positive CVaR@95\% ($9.3\times10^{-6}$), and $P(w>0)=99.3\%$ through explicit
penalization of tail-welfare losses across optimization scenarios.}
\label{fig:welfare_histograms}
\end{figure}

\begin{figure}[t]
\centering
\includegraphics[width=0.4\linewidth]{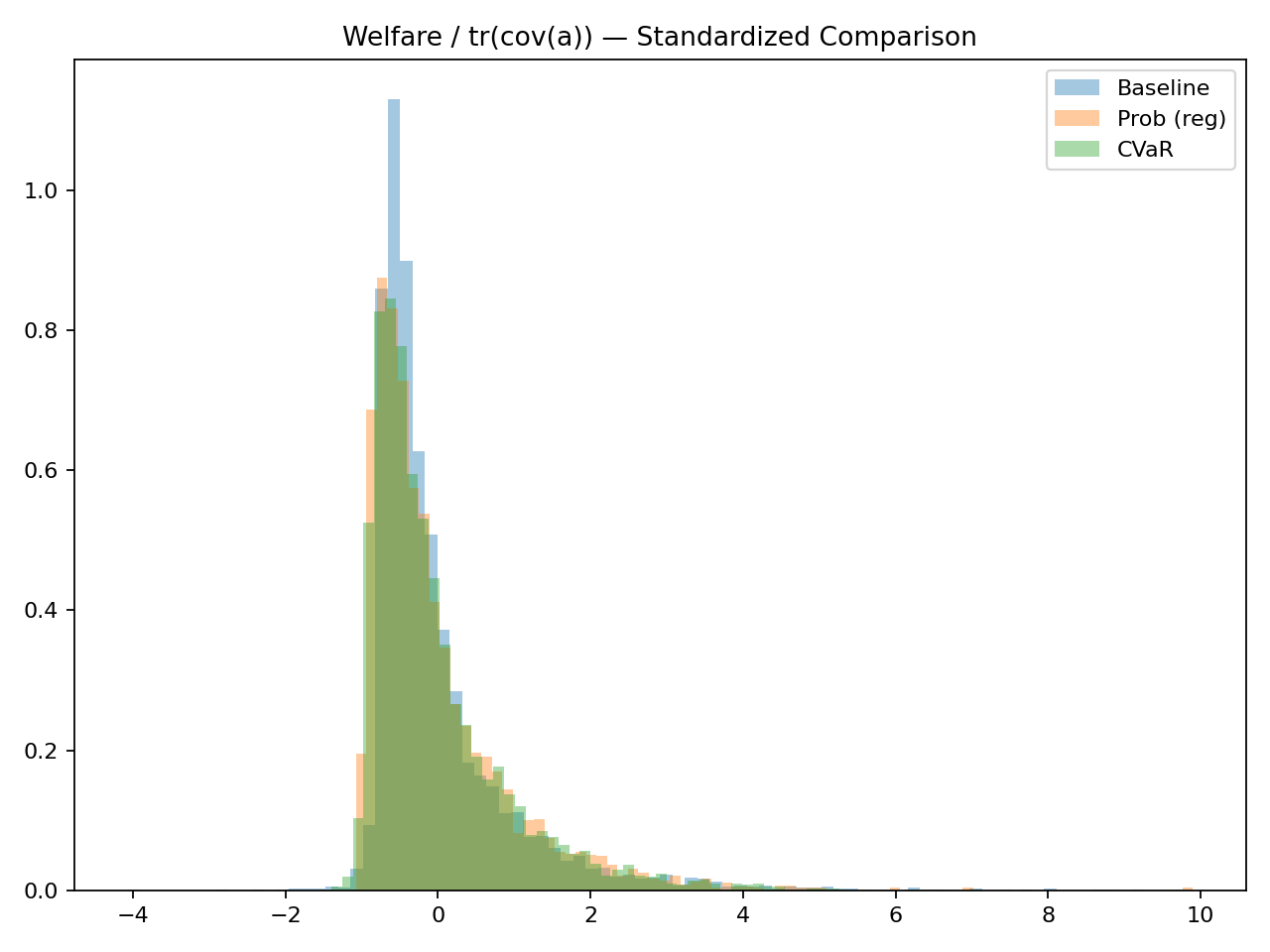}
\caption{Standardized welfare distributions (rescaled to mean zero, unit variance) across the
three formulations.
Once standardized, baseline (blue) and CVaR (green) largely overlap, both exhibiting a
moderate left tail and a concentrated body.
The probabilistic design (orange) stands apart: it has the same left-tail shape but a
substantially longer right tail, reflecting the amplified upside from exploiting cross-sector
co-movements (raw std $\approx1.9\times10^{-3}$ vs.\ $9.5\times10^{-5}$ for baseline and
$1.2\times10^{-4}$ for CVaR).
This confirms that the welfare improvement of the probabilistic design is primarily driven by
capturing high-welfare tail events, rather than by compressing the distribution.}
\label{fig:welfare_standardized}
\end{figure}

Because all welfare values are normalized by the equilibrium action variance $\mathrm{tr}(K\Sigma K^\top)$, these comparisons isolate welfare \emph{per unit of action} and are not driven by differences in action scale across formulations. On this common footing, the contrast is clear: the probabilistic design wins on mean welfare by exploiting cross-sector co-movements, while the CVaR design wins on tail protection, and the baseline is dominated on both.

\subsection{Equilibrium Maps}
These structural differences are the empirical counterpart of the joint decentralization theorem (Theorem~\ref{thm:decentralization}), which bounds not $K$ itself but the resulting \emph{action covariance} $X_{aa}=K\Sigma K^\top$: every feasible design satisfies $|X_{ij}|\le s_i/\sigma_{ij}$ from the decentralization bound~\eqref{eq:decentralization_bound}. At the level of $X_{aa}$, both designs decentralize relative to the baseline---the off-diagonal absolute maximum falls from $5.86\times10^4$ (baseline) to $1.65\times10^4$ (CVaR) and to $328$ (probabilistic)---consistent with part~(a) of the theorem. Part~(b) predicts that, \emph{at equal $(\varepsilon,\beta)$}, the CVaR design caps off-diagonal covariances at least as tightly as the chance design, since $\kappa_\beta^{\mathrm{CVaR}}\ge\kappa_\beta^{\mathrm{chance}}$. The realized ordering observed here is reversed because the two formulations are calibrated at their own design-specific feasibility thresholds rather than a common one (part~(c)): in this calibration the probabilistic design ends up tighter, leaving the CVaR design with a richer $K$ and $X_{aa}$ structure that it uses for direct tail-welfare protection rather than for decoupling. The limiting behavior matches the theory exactly: when sector shocks are independent ($\Sigma=\mathrm{var}(\gamma)$ diagonal), the map $K=\mathrm{cov}(a,\gamma)\,\Sigma^{-1}$ becomes diagonal as $\varepsilon\to0$---each sector's equilibrium action tracks only its own fundamental---recovering the closed-form diagonal construction of Theorem~\ref{thm:general_existence}(d). Economically, the probabilistic design achieves robustness by collapsing the equilibrium map toward a diagonal---decoupling sectors and compressing own-impacts alike---whereas the CVaR design instead protects the welfare tail directly, tolerating the cross-sector channels that carry the most tail-relevant information. The estimation-robustness experiment (Section~\ref{sec:estimation_robustness}) confirms these structures are not artifacts: the ridge estimator produces richer off-diagonal structure than OLS across both designs.

\begin{figure}[t]
\centering
\includegraphics[width=0.85\linewidth]{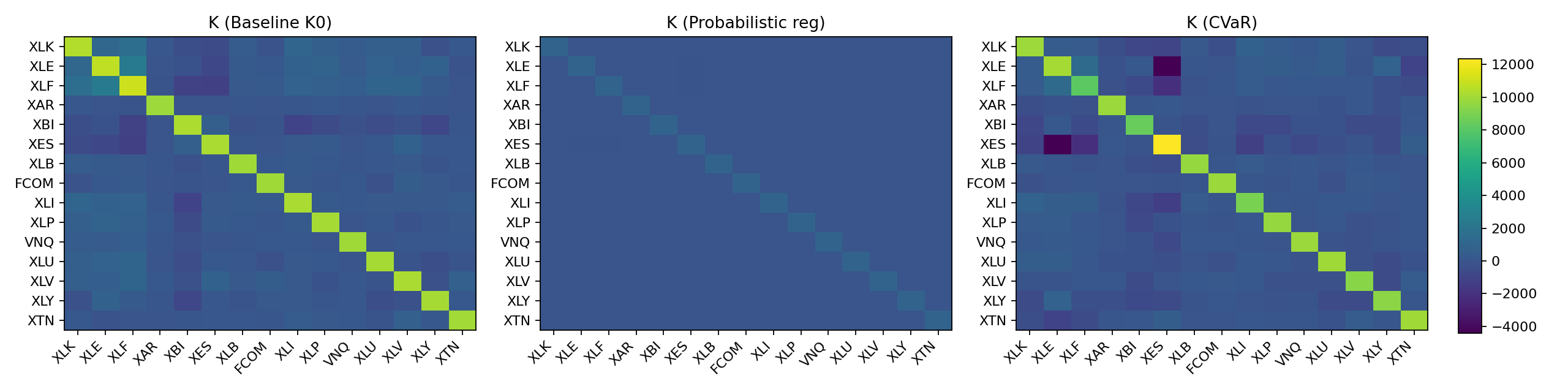}
\caption{Heatmaps of the estimated equilibrium map $K$ under three formulations.
\emph{Left:} Baseline ($K_0$), with own-impacts clustered near $10{,}000$ (range $9{,}913$--$11{,}062$) and off-diagonal entries spanning $-1{,}220$ to $2{,}511$.
\emph{Middle:} Probabilistic design, which compresses own-impacts to a tight band near $900$ (range $897$--$917$) and suppresses off-diagonal entries to within $\pm57$, indicating that chance constraints strongly regularize cross-sector interactions under this estimation.
\emph{Right:} CVaR design, with own-impacts dispersed over $8{,}154$--$12{,}348$ and off-diagonal entries spanning $-4{,}416$ to $1{,}362$---the widest off-diagonal range of the three---reflecting that CVaR targets tail welfare directly and retains substantial cross-sector channels rather than suppressing them.
This illustrates that the probabilistic design achieves robustness through sector decoupling, while the CVaR design achieves tail-robust welfare while preserving baseline-level cross-sector interaction.}
\label{fig:K_maps}
\end{figure}

\begin{figure}[t]
\centering
\includegraphics[width=0.6\linewidth]{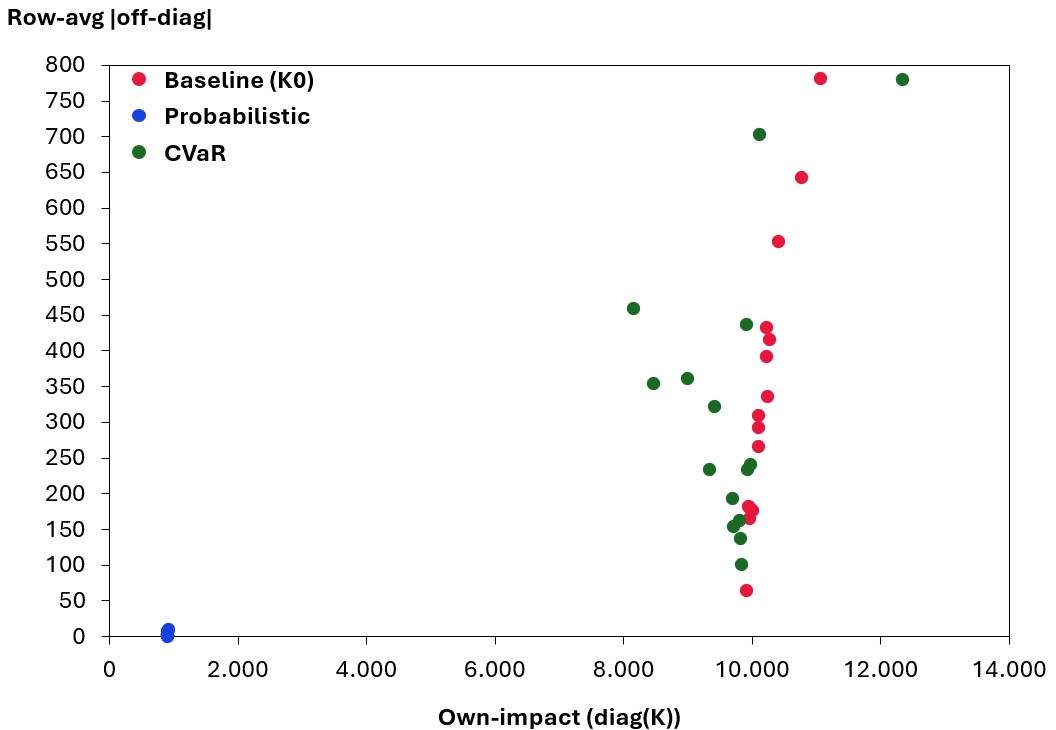}
\caption{Equilibrium structure across the three formulations, shown as a single overlaid scatter. Each point is one sector ETF, plotting its own-impact (diagonal entry of $K$) against its row-average absolute off-diagonal impact (cross-sector spillover); the formulations are distinguished by color. The baseline ($K_0$, red) clusters at own-impacts of $9{,}913$--$11{,}062$ with moderate spillovers ($65$--$781$). The probabilistic design (blue) collapses to a tight cluster at own-impacts near $900$ (range $897$--$917$) with near-zero spillovers ($0.8$--$10.4$)---the most structurally concentrated and decoupled outcome, compressing both own- and cross-sector terms by roughly an order of magnitude relative to the baseline. The CVaR design (green) keeps own-impacts at baseline scale but more dispersed ($8{,}154$--$12{,}348$) and retains spillovers comparable to the baseline ($101$--$780$), substantially less compressed than the probabilistic design. Together with Figure~\ref{fig:K_maps}, this shows that in this calibration the probabilistic design achieves robustness primarily through sector decoupling, whereas the CVaR design achieves tail-robust welfare while preserving baseline-level cross-sector interaction.}
\label{fig:K_scatter_all}
\end{figure}

\subsection{Sensitivity to Confidence Level}
\label{sec:beta_sweep}

Table~\ref{tab:beta_sweep} reports normalized welfare statistics as $\beta$ varies over $\{0.90,\,0.95,\,0.99\}$ for both formulations (Figure~\ref{fig:beta_sweep}).

\begin{table}[h]
\centering
\caption{Normalized welfare statistics under varying confidence levels $\beta$ for CVaR and probabilistic formulations ($J=200$ scenarios, quick mode, MOSEK solver). All welfare entries should be read at the scale $\times10^{-3}$.}
\label{tab:beta_sweep}
\small
\begin{tabular}{llrrrr}
\toprule
$\beta$ & Model & Mean ($\times10^{-3}$) & Std ($\times10^{-3}$) & CVaR@95\% ($\times10^{-3}$) & $P(w>0)$ \\
\midrule
$0.90$ & CVaR  & $0.121$ & $0.120$ & $0.0032$ & $0.986$ \\
$0.90$ & Prob  & $2.145$ & $1.894$ & $0.339$  & $1.000$ \\
$0.95$ & CVaR  & $0.125$ & $0.129$ & $0.0045$ & $0.991$ \\
$0.95$ & Prob  & $2.133$ & $1.934$ & $0.349$  & $1.000$ \\
$0.99$ & CVaR  & $0.123$ & $0.119$ & $0.0095$ & $0.996$ \\
$0.99$ & Prob  & $2.155$ & $1.992$ & $0.342$  & $1.000$ \\
\bottomrule
\end{tabular}
\end{table}

\begin{figure}[t]
\centering
\includegraphics[width=0.75\linewidth]{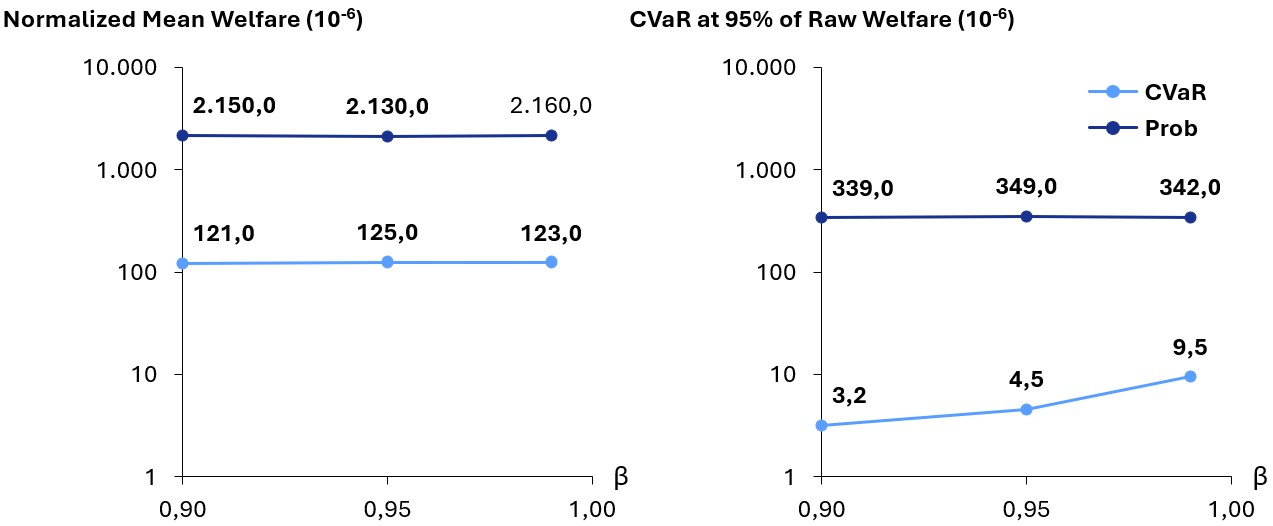}
\caption{Mean normalized welfare (\emph{left}) and CVaR@95\% of normalized welfare (\emph{right}) as a function of confidence level $\beta\in\{0.90,0.95,0.99\}$. The CVaR design's tail metric improves monotonically with $\beta$ (from $3.2\times10^{-6}$ at $\beta=0.90$ to $9.5\times10^{-6}$ at $\beta=0.99$), while its mean welfare remains nearly flat (about a $3\%$ range), confirming that tighter tail constraints improve worst-case outcomes at minimal efficiency cost. The probabilistic design's welfare is insensitive to $\beta$ in this range.}
\label{fig:beta_sweep}
\end{figure}

The CVaR@95\% of the CVaR design improves monotonically with $\beta$, from $3.2\times10^{-6}$ at $\beta=0.90$ to $9.5\times10^{-6}$ at $\beta=0.99$, confirming that a higher confidence level enforces deeper tail protection, consistent with the role of $\beta$ in the CVaR formulation (Theorem~\ref{theorem_cvar}). Its probability of positive welfare rises in step ($0.986\to0.991\to0.996$). Mean welfare varies by only about $3\%$ across all values of $\beta$, so the tail-protection gain is achieved at negligible efficiency cost. The probabilistic design's welfare is nearly flat across $\beta$, indicating robustness of the chance-constrained solution to the confidence level in this range.

\subsection{Alternative Design Objectives}
\label{sec:obj_sweep}

Table~\ref{tab:obj_sweep} reports raw (unnormalized) welfare evaluated on a common scenario set across two objectives: welfare maximization (Example~\ref{ex_social_welfare}) and contagion suppression (Example~\ref{ex_contagion}, maximizing $-\sum_{i\neq j}(X_{aa})_{ij}$). Using a single raw welfare criterion makes all rows directly comparable. Figure~\ref{fig:obj_sweep} plots welfare and CVaR@95\% across objectives.

\begin{table}[h]
\centering
\caption{Raw (unnormalized) welfare evaluated on a common set of $4{,}000$ scenarios for all designs, regardless of their optimization objective. The contagion suppression objective (Example~\ref{ex_contagion}) maximizes $-\sum_{i\neq j}(X_{aa})_{ij}$. Baseline $K_0=\bar{H}^{-1}$ is included as a reference.}
\label{tab:obj_sweep}
\small
\begin{tabular}{llrrrr}
\toprule
Objective & Model & Mean welfare & CVaR@95\% & $P(w>0)$ & $\mathrm{tr}(K\Sigma K^\top)$ \\
\midrule
Baseline             & $K_0$ & $42.8$              & $-3.83$             & $0.982$ & $5.07\times10^{5}$ \\
Welfare              & CVaR  & $37.9$              & $3.37$              & $0.995$ & $3.03\times10^{5}$ \\
Welfare              & Prob  & $6.21$              & $1.03$              & $1.000$ & $2.86\times10^{3}$ \\
Contagion supp.      & CVaR  & $10.6$              & $-15.0$             & $0.803$ & $3.18\times10^{5}$ \\
Contagion supp.      & Prob  & $2.0\times10^{-3}$  & $4.9\times10^{-4}$  & $1.000$ & $6.1\times10^{-4}$ \\
\bottomrule
\end{tabular}
\end{table}

\begin{figure}[t]
\centering
\includegraphics[width=0.75\linewidth]{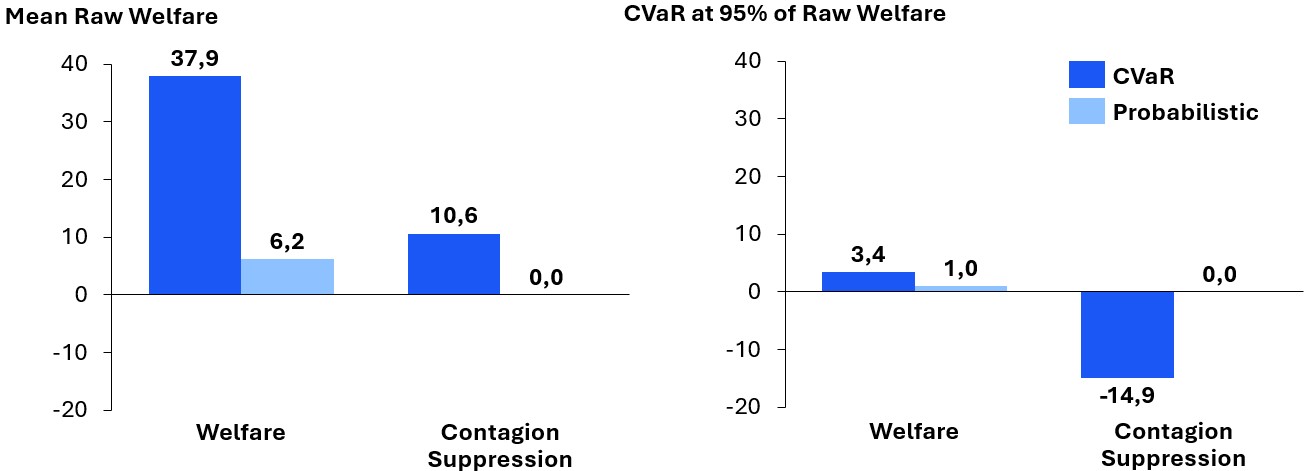}
\caption{Raw welfare and CVaR@95\% across design objectives (welfare maximization and contagion suppression), evaluated on a common $4{,}000$-scenario set; the baseline $K_0$ is reported in Table~\ref{tab:obj_sweep} for reference. Under the welfare objective, the CVaR design converts the baseline's negative welfare tail ($-3.83$) into the highest positive CVaR@95\% ($3.37$) while retaining most of the baseline mean welfare. Under contagion suppression, the probabilistic design collapses to $K\approx0$ (welfare and tail $\approx0$, transmitting no information), whereas the CVaR design retains a non-degenerate equilibrium with positive mean welfare ($10.6$)---but a sharply negative welfare tail (CVaR@95\% $=-15.0$, $P(w>0)=0.80$), since welfare is no longer the optimization target. CVaR's per-scenario constraints thus prevent the degenerate collapse of the probabilistic formulation under non-welfare objectives, without by itself protecting welfare.}
\label{fig:obj_sweep}
\end{figure}

Two findings emerge from Table~\ref{tab:obj_sweep}. First, the welfare-CVaR design preserves a large share of the baseline's mean welfare ($37.9$ vs.\ $42.8$) while converting the welfare CVaR@95\% from negative ($-3.83$) to positive ($3.37$), at modest efficiency cost; welfare-Prob uses a much smaller action scale ($\mathrm{tr}(K\Sigma K^\top)=2{,}860$ vs.\ $303{,}000$), yielding lower raw mean welfare ($6.21$) but guaranteed positive outcomes ($P(w>0)=1$). Second, and more strikingly, the contagion suppression objective exposes a structural difference between the formulations. The probabilistic design collapses to $K\approx0$ ($\mathrm{tr}\approx6\times10^{-4}$, mean welfare $\approx2\times10^{-3}$): it transmits no information. The CVaR design instead retains a large action scale ($\mathrm{tr}\approx3.2\times10^5$) and a non-degenerate equilibrium with positive mean welfare ($10.6$)---but, because the optimization target is now contagion rather than welfare, its welfare tail turns sharply negative (CVaR@95\% $=-15.0$, $P(w>0)=0.80$), worse even than the baseline ($-3.83$). The probabilistic collapse is structural: for a constant-$F$ objective ($\mathrm{std}_F=0$), the probabilistic chance constraint reduces to a single linear inequality on $X_{aa}$, which the optimizer satisfies by inflating anti-correlated action noise while keeping $K\approx0$---no information is transmitted and welfare vanishes. The CVaR formulation's per-scenario equilibrium constraints couple $X_{ag}$ to each $H^{(j)}$ realization individually, preventing this degenerate solution and producing a non-trivial policy. The lesson is not that CVaR yields better welfare under a non-welfare objective---it does not, as its negative tail shows---but that its scenario-level structure enforces economically meaningful, non-degenerate equilibria even when the optimization target diverges from welfare, whereas the probabilistic formulation degenerates.

\subsection{Stress-Test Analysis: Technology Sector Shock}
\label{sec:stress}

Table~\ref{tab:stress} evaluates normalized welfare under a severe technology-sector shock: the XLK sector return is shifted down by $5\sigma$ relative to its unconditional distribution, simulating a sudden market dislocation. Figure~\ref{fig:stress} shows the distribution shift.

\begin{table}[h]
\centering
\caption{Normalized welfare under normal vs.\ stressed (XLK $-5\sigma$) scenarios for three designs ($J=4{,}000$ evaluation scenarios, MOSEK solver).}
\small
\label{tab:stress}
\begin{tabular}{llrrr}
\toprule
Design        & Scenario & Mean & CVaR@95\% & $P(w>0)$ \\
\midrule
Baseline      & Normal              & $8.31\times10^{-5}$  & $-1.06\times10^{-5}$ & $0.976$ \\
Baseline      & Stress (XLK $-5\sigma$) & $2.00\times10^{-4}$  & $1.99\times10^{-5}$  & $0.991$ \\
Probabilistic & Normal              & $2.18\times10^{-3}$  & $3.57\times10^{-4}$  & $1.000$ \\
Probabilistic & Stress (XLK $-5\sigma$) & $5.58\times10^{-3}$  & $2.77\times10^{-3}$  & $1.000$ \\
CVaR          & Normal              & $1.31\times10^{-4}$  & $9.04\times10^{-6}$  & $0.994$ \\
CVaR          & Stress (XLK $-5\sigma$) & $3.38\times10^{-4}$  & $7.45\times10^{-5}$  & $0.997$ \\
\bottomrule
\end{tabular}
\end{table}

\begin{figure}[t]
\centering
\includegraphics[width=0.85\linewidth]{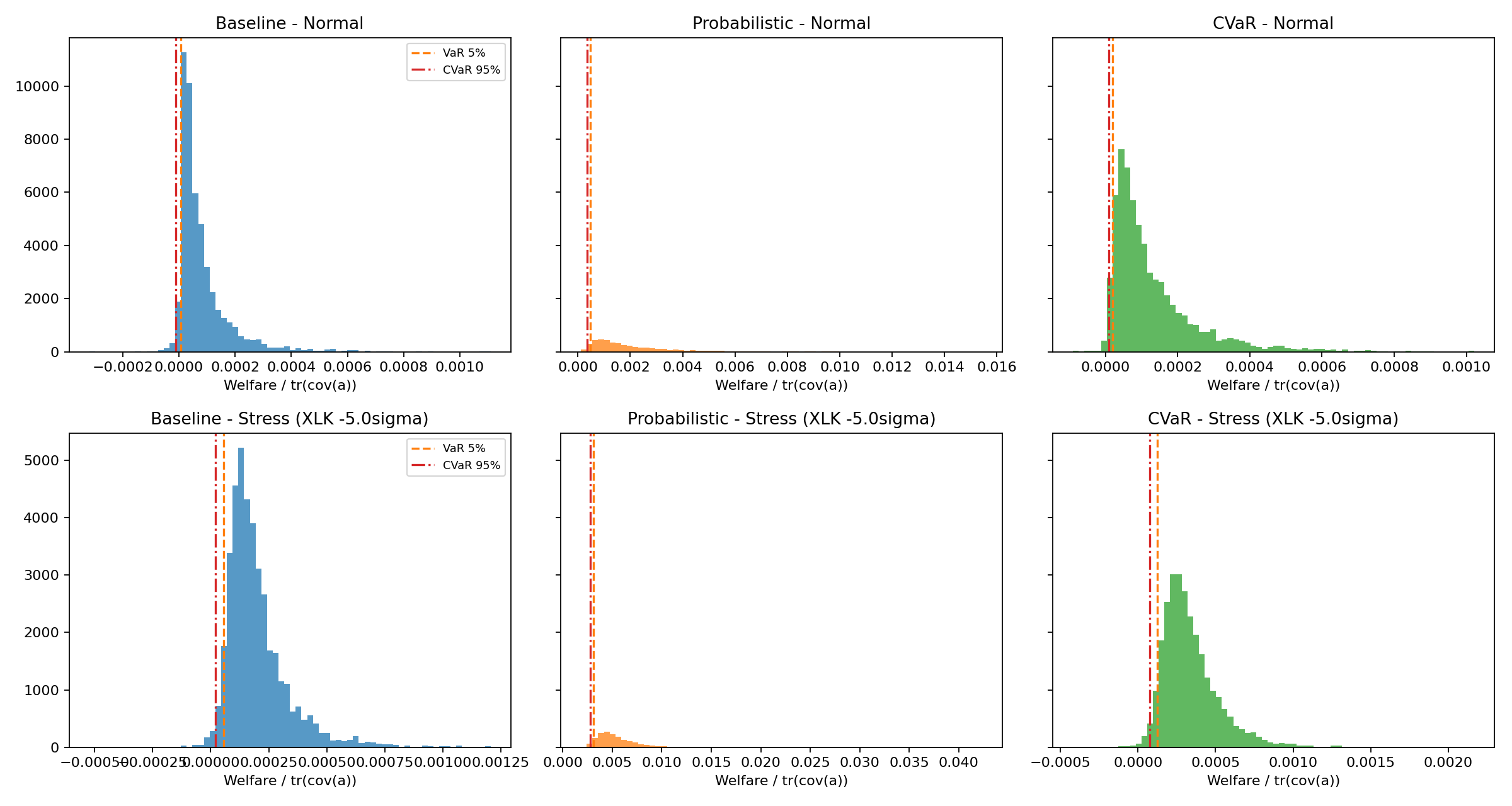}
\caption{Welfare distributions under normal (\emph{top row}) and stressed
(\emph{bottom row}, XLK $-5\sigma$) conditions for the baseline, probabilistic,
and CVaR designs.
Under stress, mean welfare rises for all three designs: the large negative
XLK payoff shock shifts the quadratic objective in a direction that benefits
agents whose equilibrium maps route the XLK signal into positive net payoffs.
The CVaR design exhibits a contained stress response (mean welfare gap
$+2.1\times10^{-4}$) relative to the probabilistic design (gap $+3.4\times10^{-3}$),
reflecting that its tail-risk penalization directly limits how much the equilibrium
map amplifies extreme $\gamma$ realizations---including sector-level shocks of this
type.}
\label{fig:stress}
\end{figure}

Under the XLK stress shock, all three designs display an increase in mean welfare. This is not paradoxical: the welfare metric is utilitarian, summing payoffs across all sectors, so a large loss concentrated in one sector can be more than offset by gains elsewhere. The mechanism is the redistributive geometry of the LQG objective: a large negative payoff shock for one sector reduces that sector's equilibrium action, which---through the cross-sector off-diagonals of $H$---can raise net welfare for the others. What distinguishes the designs is how much they \emph{amplify} the shock. The CVaR design's mean welfare gap (stressed minus normal: $+2.1\times10^{-4}$) is more than an order of magnitude smaller than the probabilistic design's ($+3.4\times10^{-3}$), and its CVaR@95\% gap ($+6.5\times10^{-5}$) is similarly contained against the probabilistic design's $+2.4\times10^{-3}$. The containment is a direct effect of CVaR's tail penalization rather than of structural decoupling: although the probabilistic design produces the more diagonal equilibrium map in this calibration, it is the CVaR design that reacts least to the dislocation, because its per-scenario constraints bound worst-case welfare across the very payoff draws---including extreme sector-level shocks---that the stress scenario probes. The CVaR design is thus the least reactive to sector-specific dislocations, consistent with the role of tail-risk control rather than off-diagonal suppression.

\subsection{Estimation Robustness}
\label{sec:estimation_robustness}
Table~\ref{tab:estimation_robustness} reports CVaR design welfare and equilibrium-map statistics as the estimation method (ridge regression vs.\ OLS) and bootstrap block length (10, 25, 50 trading days) vary. Figure~\ref{fig:estimation_robustness} displays the same welfare metrics and the $K$-structure scatter across these configurations.
\begin{table}[h]
\centering
\caption{CVaR information design welfare and equilibrium map statistics under varying estimation methods and block lengths ($J=200$ quick-mode scenarios, MOSEK solver). $K$ off-diag mean reports the mean absolute off-diagonal entry of the $15\times15$ equilibrium map.}
\label{tab:estimation_robustness}
\small
\begin{tabular}{llrrrr}
\toprule
Estimator & Block len. & Mean welfare & CVaR@95\% & $P(w>0)$ & $K$ off-diag mean \\
\midrule
Ridge & $10$ & $1.38\times10^{-4}$ & $1.35\times10^{-5}$ & $0.997$ & $445$ \\
Ridge & $25$ & $1.32\times10^{-4}$ & $1.21\times10^{-5}$ & $0.997$ & $443$ \\
Ridge & $50$ & $1.29\times10^{-4}$ & $1.28\times10^{-5}$ & $0.998$ & $367$ \\
OLS   & $10$ & $1.01\times10^{-4}$ & $1.62\times10^{-5}$ & $1.000$ &  $47$ \\
OLS   & $25$ & $9.85\times10^{-5}$ & $1.55\times10^{-5}$ & $1.000$ &  $47$ \\
OLS   & $50$ & $1.03\times10^{-4}$ & $1.64\times10^{-5}$ & $1.000$ &  $47$ \\
\bottomrule
\end{tabular}
\end{table}
\begin{figure}[t]
\centering
\begin{minipage}[b]{0.62\linewidth}
\centering
\includegraphics[width=\linewidth]{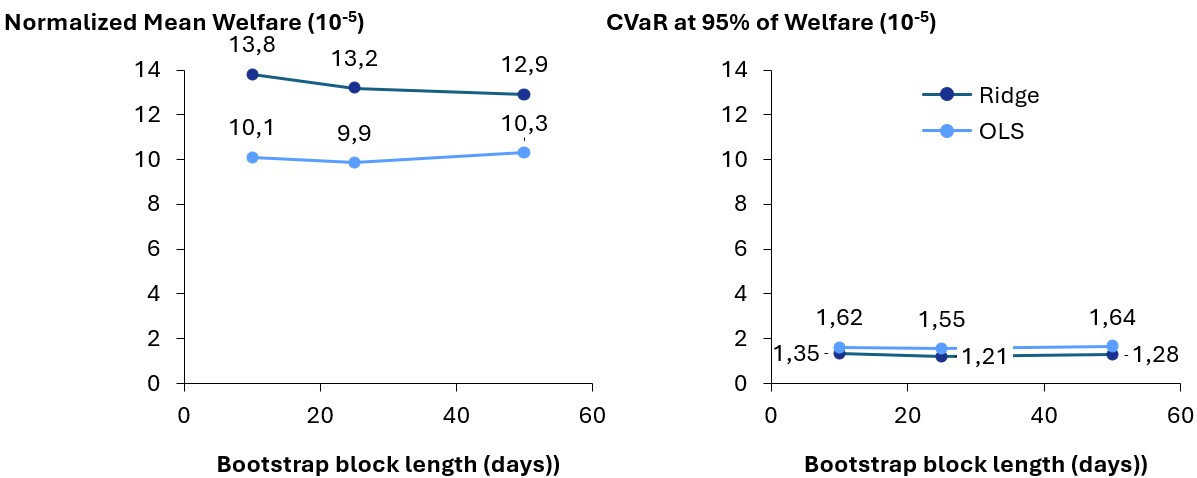}
\end{minipage}\hfill
\begin{minipage}[b]{0.36\linewidth}
\centering
\includegraphics[width=\linewidth]{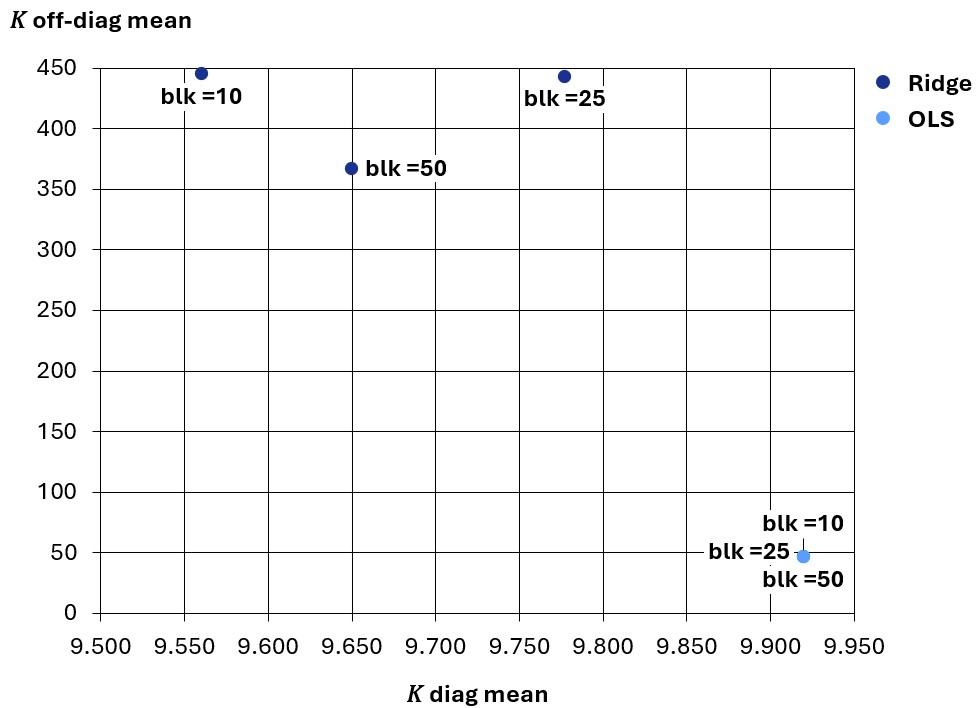}
\end{minipage}
\caption{Estimation robustness of the CVaR design across estimators (ridge vs.\ OLS) and bootstrap block lengths ($10$, $25$, $50$ days). \emph{Left (two panels):} Normalized mean welfare and CVaR@95\% of welfare versus block length, both in units of $10^{-5}$. Mean welfare is higher under ridge ($12.9$--$13.8$) than OLS ($9.9$--$10.3$), a gap of roughly $25$--$37\%$; the CVaR@95\% tail metric is positive under both estimators and comparable in magnitude ($1.2$--$1.4$ for ridge, $1.6$ for OLS). All four curves are nearly flat in block length, indicating that the results are stable to the assumed bootstrap dependence. \emph{Right:} Mean diagonal versus mean off-diagonal entries of the equilibrium map $K$. The diagonal (own-impact) means are similar across estimators ($\approx 9.5$--$9.9$), whereas the off-diagonal (cross-sector) means differ sharply: ridge retains richer cross-sector structure (off-diag mean $367$--$445$) while OLS collapses the off-diagonals to near zero (off-diag mean $\approx 47$). The qualitative CVaR findings---positive mean and tail welfare, stable across settings---hold under both estimators.}
\label{fig:estimation_robustness}
\end{figure}
Ridge regression yields consistently higher mean welfare ($1.29$--$1.38\times10^{-4}$) than OLS ($0.99$--$1.03\times10^{-4}$), a gap of roughly $25$--$37\%$, and a correspondingly richer cross-sector equilibrium map (off-diagonal $K$ mean $367$--$445$ vs.\ $47$ for OLS). OLS converges to a near-diagonal $K$ at every block length, indicating that it does not reliably identify cross-sector interactions from the available price history; ridge shrinkage, by contrast, stabilizes the off-diagonal $H$ estimates and preserves these couplings. The two estimators trade off along different margins: ridge attains higher mean welfare, while OLS attains a marginally higher (better) tail metric (CVaR@95\% $\approx1.6\times10^{-5}$ vs.\ $1.2$--$1.4\times10^{-5}$ for ridge). Within each estimator class the bootstrap block length has only a minor effect---mean welfare varies by under $5\%$ and the tail metric by at most about $12\%$ across block lengths of 10, 25, and 50 trading days---confirming that the results are qualitatively stable to reasonable assumptions about temporal dependence.

\subsection{Discussion}
The experiments establish that risk-sensitive information design is computationally tractable and structurally effective. The baseline retains a negative welfare tail (CVaR@95\%$<0$); the probabilistic design removes it and raises mean welfare sharply, while the CVaR design attains a positive CVaR@95\% with $P(w>0)=99.3\%$. Both reshape the equilibrium map relative to the baseline, but in opposite ways: the probabilistic design compresses $K$ toward near-diagonal in this calibration, whereas the CVaR design retains a richer off-diagonal structure that it uses for direct tail protection.
 
\paragraph{Financial interpretation.}
The objects above have concrete financial readings. Welfare $F\bullet X$ is the expected \emph{aggregate} risk-adjusted payoff of the sector system---the sum across sectors of return-type benefits net of quadratic interaction costs---under a chosen information-disclosure policy; the designer is therefore a market-information architect (an exchange, index provider, or regulator deciding what to reveal about fundamentals) rather than a trader. The off-diagonal action covariances $X_{ij}$ that the decentralization theorem caps are cross-sector co-movements---the channels through which a shock in one sector propagates to others---so bounding them is \emph{contagion suppression}. The tail metric is the cleanest reading: a positive CVaR@95\% means the designed system produces positive aggregate welfare even in the worst $5\%$ of fundamental realizations, a systemic-stability statement rather than a return forecast. Two qualifications matter for a financial reader: welfare is utilitarian aggregate quadratic payoff, not portfolio P\&L, so a rise in mean welfare under a sector stress shock reflects redistribution across the sum rather than a profitable crash; and the ETF data calibrate the payoff interactions $H$, not a tradeable strategy.
 
\paragraph{Economic mechanism.}
The contrast between designs is consistent with the joint decentralization theorem (Theorem~\ref{thm:decentralization}): both cap $|X_{ij}|\le\varepsilon/(\kappa_\beta\sigma_{ij})$, and although CVaR caps more tightly at a \emph{matched} tolerance (part (b)), the operative $\varepsilon$ is each design's feasibility threshold $\varepsilon^\star$ (part (c)), so the realized ordering is calibration-dependent. Where the goal is tail protection rather than decoupling per se, the CVaR design is the right tool: it keeps the system net-positive under adverse fundamentals and contains stress amplification. In this sense risk-aware information design acts as a macroprudential stabilizer---it reduces cross-sector interdependence rather than activity.
 
\paragraph{Which design protects welfare.}
The two formulations protect welfare through opposite mechanisms, so the recommendation depends on the welfare metric. The CVaR design shapes the tail of a large, active system: it keeps a high action scale ($\mathrm{tr}(K\Sigma K^\top)\approx3.0\times10^5$) and turns the baseline's negative tail ($-3.83$) positive ($+3.37$) while retaining $89\%$ of baseline mean welfare---large downside protection for an $11\%$ give-up in mean. The probabilistic design protects welfare instead by \emph{shrinking the action} (a $\sim100\times$ smaller scale, $\mathrm{tr}\approx2.9\times10^3$): it maximizes welfare per unit of action variance---a reward-to-activity ratio---and guarantees $P(w>0)=1$, but yields smaller absolute welfare. This is why the dominance ordering flips: CVaR dominates on absolute welfare and absolute tail, the probabilistic design on variance-normalized welfare, because normalization rewards its small footprint. A designer who wants the system to \emph{produce} aggregate welfare while bounding systemic losses should use the CVaR design at the highest credible confidence ($\beta=0.99$, where the $\beta$-sweep buys deeper tail protection at under $3\%$ mean cost); one who values risk-normalized efficiency and a minimal market footprint should prefer the probabilistic design. Two caveats: CVaR protects the tail only when welfare is the objective---under contagion suppression its welfare tail is the worst in the study (CVaR@95\%$=-15.0$), worse than the baseline---and the raw-versus-normalized flip rests on the action-scale gap, so the normalization must be chosen deliberately. As with the decentralization ordering, the better design depends on the regime, not on one formulation dominating outright.
 
\paragraph{Comparison to robust optimization.}
The distributional CVaR formulation contrasts with a prior worst-case robust information design \cite{sezer_robust2023}, which optimizes welfare under the most adverse $H$ in an uncertainty set: that approach gives absolute guarantees but can be overly conservative, solving a $\min_{H\in\Xi}$ problem, whereas the CVaR model enforces feasibility only over the worst $(1-\beta)$ tail of $H\sim\xi$. The CVaR design thus sits between full robustness and pure efficiency, with $\beta$ tuning the protection--performance trade-off, and with the corrector distribution $\lambda(\alpha|\theta)$ of Cal-BCE realigning recommendations with agents' true incentives when the designer's knowledge of $H$ is imperfect---keeping the design equilibrium-consistent under model misspecification.
 
Taken together, the sensitivity (Section~\ref{sec:beta_sweep}), alternative-objective (Section~\ref{sec:obj_sweep}), stress (Section~\ref{sec:stress}), and estimation-robustness (Section~\ref{sec:estimation_robustness}) experiments confirm three findings: (i) the probabilistic formulation maximizes expected welfare but at higher variance; (ii) the CVaR formulation trades mean efficiency for tail protection, with positive CVaR@95\% and contained shock amplification; and (iii) ridge estimation yields richer equilibrium structure and higher welfare than OLS, stably across bootstrap block lengths. All programs ($n=15$, $J=1500$) solved within minutes in MOSEK, scaling near-linearly in $J$ and polynomially in $n$ (See Appendix \ref{app_sca}). More broadly, by shaping the flow of information one can reduce systemic fragility without suppressing productive interaction---positioning information design as a possible foundation for macroprudential disclosure policy.

\section{Conclusion}
This paper studied information design under structural payoff uncertainty in linear-quadratic-Gaussian games, with four contributions. First, I introduced the Calibrated Bayes Correlated Equilibrium (Cal-BCE) and, adapting the revelation principle for Bayes correlated equilibrium, showed that optimizing over information structures and corrector policies reduces without loss to optimizing over action distributions subject to Cal-BCE constraints, even when the designer does not know the agents' payoff coefficients. Second, I derived tractable convex SDP and SOCP reformulations under two-sided probabilistic and CVaR constraints, with feasibility guaranteed by a checkable Hadamard invertibility condition and an explicit tolerance threshold. Third, I proved a joint decentralization theorem: both formulations cap the off-diagonal action covariances $|X_{ij}|\le\varepsilon/(\kappa_\beta\sigma_{ij})$, so risk-aware design suppresses strategic interdependence without reducing activity; CVaR caps more tightly at a common tolerance, but the realized ordering is set by each design's feasibility threshold $\varepsilon^\star$ and is calibration-dependent. Fourth, experiments with fifteen sector ETFs (2015-2024) confirm these findings: both designs deliver positive welfare under normal and stress scenarios; the probabilistic design attains higher mean welfare while the CVaR design improves its tail metric monotonically in $\beta$ at under $3\%$ mean-efficiency cost; a $-5\sigma$ technology-sector stress test shows the CVaR design contains shock amplification; and the better design depends on the objective and welfare metric rather than dominating outright. Estimation-robustness analysis shows these differences are stable across ridge and OLS estimation and bootstrap block lengths, with ridge yielding richer structure and higher welfare. Financially, welfare here is the aggregate risk-adjusted payoff of the sector system under a chosen information-disclosure policy, the capped cross-agent covariances are contagion channels, and a positive tail metric means the system stays net-positive under adverse fundamentals; the designer is thus a market-information architect rather than a trader. By shaping the flow of information, such a designer can reduce cross-sector interdependence rather than activity---suggesting a role for risk-aware information design in macroprudential disclosure and the mitigation of systemic fragility. Future work could extend the framework to dynamic (continuous-time) environments, alternative risk measures, sender credibility and learning, and richer datasets with macro and factor signals.

\bibliographystyle{plain}
\bibliography{ref}

\appendix
\smallskip
\section{Notation}\label{app_notation}
Here is a table introducing the key notation of the paper.
\begin{table}[h]
\small
\centering
\caption{Summary of key notation.}
\label{tab:notation}
\begin{tabular}{ll}
\toprule
Symbol & Description \\
\midrule
$\zeta(\omega|\gamma)$ & Information structure: distribution of signals $\omega$ given payoff state $\gamma$ \\
$\phi(a|\gamma)$ & Action distribution: probability of action profile $a$ given payoff state $\gamma$ \\
$\lambda(\alpha|\theta)$ & Corrector distribution: adjustment to actions accounting for unknown payoff coefficients $\theta$ \\
$X \in \mathbb{S}_+^{2n}$ & Covariance block for $(a,\gamma)$: variances and covariances of actions and payoff states \\
$G \in \mathbb{S}_+^{2n}$ & Covariance block for $(\alpha,\theta)$: variances and covariances of corrector terms and payoff coefficients \\
$H \in \mathbb{R}^{n\times n}$ & Payoff coefficient matrix: self-costs (diagonal) and cross-impacts (off-diagonal) \\
$\theta=\mathrm{diag}(H)\in\mathbb{R}^n$ & Uncertainty state: vector of own-coefficients $\theta_i=H_{ii}$, known to agents, unknown to designer \\
$\alpha\in\mathbb{R}^n$ & Corrector profile: per-agent additive correction to recommended actions, drawn from $\lambda(\alpha|\theta)$ \\
$K \in \mathbb{R}^{n\times n}$ & Equilibrium map: sensitivity of equilibrium actions to payoff states \\
$t \in \mathbb{R}$ & Auxiliary variable for system objective lower bound \\
$\beta_F,\beta_i \in (0,1)$ & Confidence levels for system objective and equilibrium constraints \\
$\varepsilon,\varepsilon_X,\varepsilon_G \ge 0$ & Tolerances (band half-widths) for the two-sided equilibrium constraints \\
\bottomrule
\end{tabular}
\end{table}

\section{Existence of Nontrivial Calibrated Bayes Correlated Equilibrium (Cal-BCE)}
\label{app:Cal-BCE_existence}
To establish that Cal-BCE exists in nontrivial form when the designer faces parameter uncertainty, a constructive existence result based on diagonal linear policies with explicit correction structure. The construction shows how the corrector distribution $\lambda(\alpha|\theta)$ can be chosen to offset uncertainty in payoff coefficients and maintain equilibrium consistency, and it specializes to a closed form with gains $1/\bar H_{ii}$ when the covariances are diagonal (part (d)).

\begin{theorem}[Existence of a nontrivial Cal-BCE under general covariances]\label{thm:general_existence}
Let $\bar H=\mathbb{E}_\xi[H]$, $\Sigma=\mathrm{var}(\gamma)$, $\Psi=\mathrm{var}(\theta)$, write $A\circ B$ for the Hadamard (entrywise) product, and let $\mathrm{dg}(M)=(M_{11},\dots,M_{nn})^\top$ collect the diagonal entries of $M$. Suppose $\bar H\circ\Sigma$ and $\bar H\circ\Psi$ are invertible, and set
\[
d=(\bar H\circ\Sigma)^{-1}\mathrm{dg}(\Sigma),\qquad
b=(\bar H\circ\Psi)^{-1}\mathrm{dg}(\Psi),
\]
with $D=\mathrm{Diag}(d)$, $B=\mathrm{Diag}(b)$, and the diagonal policies $a=D\gamma$, $\alpha=B(\theta-\mathbb{E}[\theta])$. Then:
\begin{enumerate}
\item[(a)] \emph{(Feasible covariance.)} $X:=\begin{bmatrix}D\\ I\end{bmatrix}\Sigma\begin{bmatrix}D\\ I\end{bmatrix}^{\top}\succeq0$ has $X_{\gamma\gamma}=\Sigma$ and satisfies the covariance-consistency constraint \eqref{eq_cov_gamma}; the corrector block is built analogously from $(B,\Psi)$.
\item[(b)] \emph{(Equilibrium in mean; exact under certainty.)} $\Lambda^{(X)}_i(H)=d_i\big[((H\circ\Sigma)d)_i-\Sigma_{ii}\big]$, so $\mathbb{E}_\xi[\Lambda^{(X)}_i(H)]=0$ for every $i$, and likewise $\mathbb{E}_\xi[\Lambda^{(G)}_i(H)]=0$. If $H$ is known ($\Sigma_H=0$) the residuals vanish pathwise and $X$ is an exact, nontrivial Cal-BCE ($a\not\equiv0$, $\alpha\not\equiv0$).
\item[(c)] \emph{(Robust feasibility above a threshold.)} Under Gaussian $H$, $\Lambda^{(X)}_i(H)$ is mean-zero Gaussian with standard deviation $s_i=d_i\big(\sum_j \sigma_{ij}^2\Sigma_{ij}^2 d_j^2\big)^{1/2}$. The two-sided chance program (Theorem~\ref{theorem_chance}) and the CVaR program (Theorem~\ref{theorem_cvar}) are feasible for every tolerance
\[
\varepsilon\ \ge\ \varepsilon^\star:=\kappa_\beta\,\max_i s_i ,
\]
with $\kappa_\beta$ the multiplier of Theorem~\ref{thm:decentralization}; the corrector channel admits the analogous threshold with $\Psi$ in place of $\Sigma$. Hence the robust feasible set is nonempty above an explicit, coefficient-uncertainty-determined tolerance.
\item[(d)] \emph{(Diagonal case: closed form and zero-tolerance exactness.)} If $\Sigma$ and $\Psi$ are diagonal then $\bar H\circ\Sigma=\mathrm{Diag}(\bar H_{ii}\Sigma_{ii})$, so the gains reduce to $d_i=b_i=1/\bar H_{ii}$ and the off-diagonal action covariances $X_{ij}$ ($i\neq j$) vanish. The residual collapses to the own-coefficient term $H_{ii}X_{ii}-X_{i,n+i}$, which the corrector neutralizes exactly ($b_i=1/H_{ii}$). In this case the Cal-BCE moment identities hold \emph{pathwise} in $H$---not merely in mean---so the chance and CVaR constraints are met with zero shortfall at tolerance $\varepsilon=0$, giving a nontrivial Cal-BCE ($a\not\equiv0$, $\alpha\not\equiv0$) with independent (componentwise) shocks.
\end{enumerate}
\end{theorem}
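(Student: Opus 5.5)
The plan is to compute everything directly from the linear policy $a=D\gamma$, and then feed the resulting Gaussian residual into the reformulations of Theorems~\ref{theorem_cvar} and~\ref{theorem_chance}.

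\emph{Part (a).} Write $(a,\gamma)=\begin{bmatrix}D\\ I\end{bmatrix}\gamma$. Its covariance is then $X=\begin{bmatrix}D\\ I\end{bmatrix}\Sigma\begin{bmatrix}D\\ I\end{bmatrix}^\top$, which is a congruence of $\Sigma\succeq0$ and hence positive semidefinite. Its lower-right block is $\Sigma$, so every $M_{k,l}\bullet X=\Sigma_{kl}$ and \eqref{eq_cov_gamma} holds. The relevant entries are $X_{ij}=d_i\Sigma_{ij}d_j$ for $i,j\le n$ and $X_{i,n+i}=d_i\Sigma_{ii}$. The corrector block is identical with $(B,\Psi)$ and the centered $\theta$.

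\emph{Part (b).} Substitute into $\Lambda^{(X)}_i(H)=\sum_j H_{ij}X_{ij}-X_{i,n+i}$. This gives $d_i\big[\sum_j H_{ij}\Sigma_{ij}d_j-\Sigma_{ii}\big]=d_i[((H\circ\Sigma)d)_i-\Sigma_{ii}]$. Since the map $H\mapsto\Lambda^{(X)}_i(H)$ is affine, taking expectations replaces $H$ by $\bar H$. The defining equation $(\bar H\circ\Sigma)d=\mathrm{dg}(\Sigma)$ then makes the bracket vanish, and the same computation works for $G$. If $\Sigma_H=0$, then $H=\bar H$ almost surely and the residual is identically zero. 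Nontriviality requires $d\neq0$, which follows because $\mathrm{dg}(\Sigma)\neq0$ (assuming nondegenerate shocks) and $\bar H\circ\Sigma$ is invertible; this gives $a\not\equiv0$.

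\emph{Part (c).} Under $\Sigma_H=\mathrm{diag}(\sigma_{ij}^2)$ the residual is an affine function of Gaussian $H$. By (b) it has mean $0$ and variance $d_i^2\sum_j\sigma_{ij}^2\Sigma_{ij}^2d_j^2$, so its standard deviation is $s_i$. This is the same computation as Corollary~\ref{cor:independent}, with $X_{ij}=d_i\Sigma_{ij}d_j$.

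With $m_i=0$, the two halves of the two-sided chance constraint in Theorem~\ref{theorem_chance} reduce to $\Phi^{-1}(\tfrac{1+\beta}{2})s_i\le\varepsilon$. For CVaR, I use the Gaussian identity $\mathrm{CVaR}_\beta(\pm Z-\varepsilon)=\kappa^{\mathrm{CVaR}}_\beta s_i-\varepsilon$, the same identity used in Theorem~\ref{thm:decentralization}(a). Both constraints therefore hold for every $i$ once $\varepsilon\ge\kappa_\beta\max_is_i$.

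The objective constraints never obstruct feasibility, because $t$ is free and can be taken sufficiently negative.

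One caveat concerns the CVaR program of Theorem~\ref{theorem_cvar}, which is posed as an SAA. I would state feasibility for the population CVaR constraint, or equivalently in the limit $J\to\infty$ via Lemma~\ref{lem:SAA}. This is the step I expect to need care with: a finite-sample version may require a slightly larger $\varepsilon$. The corrector channel is handled verbatim with $(b,\Psi)$.

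\emph{Part (d).} If $\Sigma$ is diagonal, then $\bar H\circ\Sigma=\mathrm{Diag}(\bar H_{ii}\Sigma_{ii})$, so $d_i=1/\bar H_{ii}$ and $X_{ij}=0$ for $i\neq j$. The residual then reduces to $H_{ii}X_{ii}-X_{i,n+i}=\Sigma_{ii}d_i(H_{ii}d_i-1)$, and the same reduction holds for $G$.

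\textbf{Main obstacle.} This residual does not vanish pathwise with the designer's gain $1/\bar H_{ii}$. To get pathwise exactness I would invoke the timing of the commitment paragraph. The designer commits only to the rule $b_i(\theta)=1/\theta_i=1/H_{ii}$, and this rule is evaluated on the agent side, where $\theta$ is known. With that choice, $H_{ii}b_i-1=0$ identically, so the $G$-residual is zero for every realization.

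For the action channel, I would argue that with diagonal $\Sigma$ only the own coefficient enters the residual. The effective played action, calibrated through the $\theta$-measurable corrector, uses the gain $1/H_{ii}$ and so also zeroes the action residual pathwise. Consequently every shortfall is zero at $\varepsilon=0$, and nontriviality follows from $\Sigma_{ii}>0$.

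Making this agent-side recalibration of the action gain rigorous, while keeping it consistent with $X$ being designer-chosen, is the delicate point. I would phrase it through the effective signal $\tilde\omega=a-\alpha$ of \eqref{eq:tilde-zeta}.
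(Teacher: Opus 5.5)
Parts (a)--(c) follow the paper's proof essentially line for line. Your caveat about Theorem~\ref{theorem_cvar} is a fair refinement the paper does not make: that program is an SAA, so population feasibility at $\varepsilon^\star$ does not by itself give feasibility for a fixed finite sample.

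The gap is in (d). You have found exactly the step the paper's own proof glosses over. The paper says the residual $d_i\Sigma_{ii}(H_{ii}d_i-1)$ ``vanishes pathwise once $d_i=1/H_{ii}$,'' which replaces the deterministic gain $1/\bar H_{ii}$ by the random $1/H_{ii}$ without justification. But the step is not merely delicate: it cannot be made rigorous, because the pathwise, zero-tolerance claim is false under (d)'s own hypotheses.

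In both programs a single matrix $X$ (and a single $G$) must serve every scenario $H_j$. In the diagonal case, $\Lambda^{(X)}_i(H)=H_{ii}X_{ii}-X_{i,n+i}$ is therefore an affine function of $H_{ii}$ with fixed slope $X_{ii}=\mathrm{var}(a_i)$. Diagonal $\Psi$ with $\bar H\circ\Psi$ invertible forces $\mathrm{var}(H_{ii})=\Psi_{ii}>0$. Vanishing for $\xi$-a.e.\ $H$ then forces $\mathrm{var}(a_i)=0$, i.e.\ $a\equiv0$. The same argument applied to $\Lambda^{(G)}_i(H)=H_{ii}G_{ii}-G_{i,n+i}$ forces $\mathrm{var}(\alpha_i)=0$. (Equivalently, Theorem~\ref{thm:decentralization}(a) at $\varepsilon=0$ gives $s_i=0$, hence $\sigma_{ii}|X_{ii}|=0$.) Zero tolerance and nontriviality are therefore incompatible. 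This matches the remark in Lemma~\ref{lem_prob} that $\Pr_\xi(\Lambda=0)=0$ makes a nonzero tolerance necessary.

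Your agent-side recalibration does capture something true. With independent centered shocks, $\tilde a_i=\gamma_i/H_{ii}$ is a best-response profile for every realization of $H$, because $\mathbb{E}[\tilde a_j\mid\gamma_i,H]=0$ removes the cross terms. But its second moments are the scenario-dependent conditional covariances $X_{ii}(H)=\Sigma_{ii}/H_{ii}^2$ and $X_{i,n+i}(H)=\Sigma_{ii}/H_{ii}$. That is not a feasible point of programs whose decision variable is one $H$-independent $X$, and by the argument above no single $X$ can reproduce it.

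Concretely, once $b_i=1/\theta_i$ is random, $G_{ii}=\mathrm{var}(\alpha_i)$ is a constant rather than $b_i^2\Psi_{ii}$. So the expression $\Psi_{ii}b_i(H_{ii}b_i-1)$ you set to zero is no longer the residual. Moreover, $\alpha_i=1-\mathbb{E}[\theta_i]/\theta_i$ has infinite variance for Gaussian $\theta_i$, and it destroys the joint Gaussianity of $(\alpha,\theta)$ assumed in Lemma~\ref{lem:linear_cvar_X}.

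What you can actually prove in (d) is (c) specialized: the closed form $d_i=b_i=1/\bar H_{ii}$, $X_{ij}=0$ for $i\neq j$, and feasibility for $\varepsilon\ge\kappa_\beta\max_i\sigma_{ii}\Sigma_{ii}/\bar H_{ii}^2>0$.

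A smaller instance of the same issue appears in (b). $\Sigma_H=0$ gives $\Psi=\mathrm{var}(\mathrm{diag}(H))=0$, so $\alpha=B(\theta-\mathbb{E}[\theta])\equiv0$, and $\bar H\circ\Psi$ is singular. Your argument rightly establishes only $a\not\equiv0$; the claim $\alpha\not\equiv0$ there cannot be proved.
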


\begin{proof}{Proof}
(a) $X=M\Sigma M^\top$ with $M=\begin{bmatrix}D\\ I\end{bmatrix}$ is positive semidefinite because $\Sigma\succeq0$; its lower-right block is $I\Sigma I=\Sigma$, so the consistency constraint \eqref{eq_cov_gamma} holds. The corrector block is identical with $(B,\Psi)$ replacing $(D,\Sigma)$.

(b) From $X_{ij}=\mathrm{cov}(a_i,a_j)=d_id_j\Sigma_{ij}$ and $X_{i,n+i}=\mathrm{cov}(a_i,\gamma_i)=d_i\Sigma_{ii}$,
\[
\Lambda^{(X)}_i(H)=\sum_j H_{ij}\,d_id_j\Sigma_{ij}-d_i\Sigma_{ii}
=d_i\Big[\big((H\circ\Sigma)d\big)_i-\Sigma_{ii}\Big].
\]
Taking expectations and using $(\bar H\circ\Sigma)d=\mathrm{dg}(\Sigma)$ gives $\mathbb{E}_\xi[\Lambda^{(X)}_i]=0$. If $\Sigma_H=0$ then $H=\bar H$ almost surely and the bracket vanishes pathwise. The corrector identity follows verbatim with $(\alpha,\theta,B,\Psi)$ in place of $(a,\gamma,D,\Sigma)$.

(c) Only the term $d_i\sum_j \Sigma_{ij}d_j H_{ij}$ is random; with independent $H_{ij}$ of variance $\sigma_{ij}^2$, $\mathrm{Var}[\Lambda^{(X)}_i]=d_i^2\sum_j \sigma_{ij}^2\Sigma_{ij}^2 d_j^2=s_i^2$. By the two-sided chance/CVaR reformulations, constraint $i$ holds once $\kappa_\beta s_i\le\varepsilon-|\mathbb{E}[\Lambda^{(X)}_i]|=\varepsilon$; thus $\varepsilon\ge\kappa_\beta\max_i s_i$ suffices, and the objective is then maximized over a nonempty feasible region.

(d) For diagonal $\Sigma$, $(\bar H\circ\Sigma)$ is diagonal with entries $\bar H_{ii}\Sigma_{ii}$, whence $d_i=\Sigma_{ii}/(\bar H_{ii}\Sigma_{ii})=1/\bar H_{ii}$ and $X_{ij}=d_id_j\Sigma_{ij}=0$ for $i\neq j$. The residual from (b) is then $\Lambda^{(X)}_i(H)=d_i[H_{ii}d_i\Sigma_{ii}-\Sigma_{ii}]=d_i\Sigma_{ii}(H_{ii}d_i-1)$, which vanishes pathwise once $d_i=1/H_{ii}$; the corrector identity vanishes identically for $b_i=1/H_{ii}$ by the same computation with $(\alpha,\theta,b,\Psi)$ in place of $(a,\gamma,d,\Sigma)$. Hence both moment equalities hold for every realization of $H$, and the chance/CVaR constraints are satisfied with zero shortfall at $\varepsilon=0$. \qed
\end{proof}

\begin{remark}[Interpretation and scope]
Theorem~\ref{thm:general_existence} provides a constructive, nontrivial existence result with meaningful correction. The diagonal gains $d=(\bar H\circ\Sigma)^{-1}\mathrm{dg}(\Sigma)$ and $b=(\bar H\circ\Psi)^{-1}\mathrm{dg}(\Psi)$ enforce the Cal-BCE identities in mean whenever the Hadamard products $\bar H\circ\Sigma$ and $\bar H\circ\Psi$ are invertible, and the robust programs are then feasible for every tolerance $\varepsilon\ge\kappa_\beta\max_i s_i$. In the componentwise case (diagonal $\Sigma$ and $\mathrm{var}(\theta)$), part (d) sharpens this to a closed form: a diagonal linear policy ($a_i\propto\gamma_i$) with a nonzero, agentwise correction ($\alpha_i\propto\theta_i$) and gains $1/\bar H_{ii}$ enforces the identities \emph{pathwise} in $H$, at zero tolerance. The Hadamard condition is sufficient, not necessary: when it fails, a non-diagonal or sparse gain may still certify feasibility.
\end{remark}

\section{First-moment (mean) conditions}\label{app_mean}
\begin{remark}[First-moment (mean) conditions]\label{rem:first-moment}
Lemmas~\ref{lem:linear_cvar_X}--\ref{lem_prob} project the agents' obedience first-order condition onto the centered instruments $(a_i-\mathbb{E}[a_i])$ and $(\alpha_i-\mathbb{E}[\alpha_i])$, which yields the second-moment constraints stated in those lemmas. Projecting the same first-order condition onto the constant instrument (i.e.\ taking unconditional expectations, using $\mathbb{E}[m_{ij}]=\mu_{a,j}$ and $\mathbb{E}[m_{i\gamma}]=\mu_{\gamma,i}$) yields the complementary \emph{first-moment} balance
\begin{equation}\label{eq:mean_balance}
\Lambda^{(\mu)}_i(H):=\sum_{j\in\mathcal N} H_{ij}\,\mu_{a,j}-H_{ii}\,\mu_{\alpha,i}-\mu_{\gamma,i}=0,\qquad i\in\mathcal N,
\end{equation}
where $\mu_a=\mathbb{E}[a]$, $\mu_\alpha=\mathbb{E}[\alpha]$, and $\mu_\gamma=\mathbb{E}[\gamma]=\mu$. Since $\mathbb{E}[a_j]$ and $\mathbb{E}[\gamma_i]$ are nonzero in general, this condition is not implied by the second-moment constraints of Lemmas~\ref{lem:linear_cvar_X}--\ref{lem_prob} and must be imposed separately. Two regimes arise.

\emph{(Centered design; maintained in this paper.)} Take $\mu=0$ and a mean-zero corrector $\mathbb{E}[\alpha]=0$. The latter is without loss for the second-moment conditions: replacing $\alpha_i$ by its centered version $\alpha_i-\mathbb{E}[\alpha_i]$ (equivalently, calibrating to the fluctuation $\theta_i-\mathbb{E}[\theta_i]$ rather than the level of the uncertain own-coefficient) leaves $\mathrm{cov}(\alpha_i,\alpha_j)$ and $\mathrm{cov}(\alpha_i,\theta_i)$, and hence the corrector-channel constraint \eqref{eq_affected_inequality_app_cvar_2}, unchanged. Then \eqref{eq:mean_balance} reduces to $\sum_j H_{ij}\mu_{a,j}=0$, which is satisfied by $\mu_a=0$. Moreover, when $\mu=0$ the objective \eqref{eq_obj_exp} has no first-order term in $\mu_a$ (its mean part is the quadratic $\mu_a^\top[F]_{1,1}\mu_a$), so $\mu_a=0$ is a stationary point and is optimal for the concave (welfare-based) objective of Examples~\ref{ex_social_welfare}, while the contagion suppression objective of Example~\ref{ex_contagion} is mean-invariant. Hence the first-moment balance holds at the optimum and the design reduces \emph{without loss} to the covariance programs of Theorems~\ref{theorem_cvar}--\ref{theorem_chance}, with $\mathbb{E}_\phi[f]=F\bullet X$ exact.

\emph{(General means.)} For $\mu\neq0$, the first-moment balance \eqref{eq:mean_balance} is affine in $H$ and in the mean variables $(\mu_a,\mu_\alpha)$, so it can be appended to the CVaR and chance programs by the same Rockafellar--Uryasev/SAA and second-order-cone reformulations used for the second-moment constraints, with the objective acquiring the term $z^\top F z$, $z=[\mu_a;\mu]$. Linearity in a single decision matrix is preserved by lifting $X$ to the augmented moment matrix $\bar X=\mathbb{E}[[1;a;\gamma][1;a;\gamma]^\top]\in\mathbb{S}_+^{2n+1}$ with $\bar X_{11}=1$, whose border carries the means and whose interior block carries the second moments; the centered case is recovered by fixing the border to zero.
\end{remark}
\newpage
\section{ETF Summary}
\label{app_etf}
Basic statistics regarding sector ETF data we used in numerical experiments are provided.
\begin{table}[h]
\centering
\caption{Sample length (number of trading days), annualized mean log return (in percent per annum), and annualized volatility (percent per annum) for the selected ETFs. Statistics are computed from daily log returns using Yahoo Finance price series.}
\footnotesize
\label{tab:etf_stats}
\begin{tabular}{l l r r r}
\toprule
Ticker & Sector Name & Sample length (days) & Mean (\% p.a.) & Volatility (\% p.a.)\\
\midrule
XLK  & Technology & 2697 & 19.047 & 23.807\\
XLE  & Energy & 2697 & 5.185 & 29.789\\
XLF  & Financials & 2697 & 11.058 & 22.152\\
XAR  & Aerospace \& Defense & 2697 & 14.322 & 23.393\\
XBI  & Biotech & 2697 & 4.269 & 33.228\\
XES  & Oil \& Gas & 2697 & -11.309 & 46.182\\
XLB  & Materials & 2697 & 7.661 & 20.712\\
FCOM & Communications & 2697 & 11.147 & 20.627\\
XLI  & Industrials & 2697 & 11.082 & 19.742\\
XLP  & Consumer Staples & 2697 & 7.194 & 14.667\\
VNQ  & Real Estate & 2697 & 4.885 & 20.856\\
XLU  & Utilities & 2697 & 8.836 & 19.138\\
XLV  & Health Care & 2697 & 8.065 & 16.872\\
XLY  & Consumer Discretionary & 2697 & 12.404 & 21.644\\
XTN  & Transportation & 2697 & 5.145 & 25.505\\
\bottomrule
\end{tabular}
\end{table}

\begin{table}[ht]
\centering
\caption{Correlation matrix of daily log returns for the selected ETFs. Pairwise correlations are computed on the aligned overlapping sample of data from Jan 1, 2015 to Dec 30, 2024.}
\footnotesize
\label{tab:etf_corr}
\begin{tabular}{l r r r r r r r r r r r r r r r}
\toprule
 & XLK & XLE & XLF & XAR & XBI & XES & XLB & FCOM & XLI & XLP & VNQ & XLU & XLV & XLY & XTN \\
\midrule
XLK & 1.00 & 0.47 & 0.68 & 0.67 & 0.59 & 0.42 & 0.70 & 0.79 & 0.75 & 0.56 & 0.59 & 0.43 & 0.67 & 0.84 & 0.67 \\
XLE & 0.47 & 1.00 & 0.69 & 0.64 & 0.37 & 0.88 & 0.68 & 0.48 & 0.69 & 0.42 & 0.49 & 0.36 & 0.48 & 0.50 & 0.60 \\
XLF & 0.68 & 0.69 & 1.00 & 0.78 & 0.50 & 0.61 & 0.82 & 0.67 & 0.88 & 0.62 & 0.67 & 0.49 & 0.69 & 0.73 & 0.78 \\
XAR & 0.67 & 0.64 & 0.78 & 1.00 & 0.55 & 0.59 & 0.75 & 0.65 & 0.88 & 0.54 & 0.66 & 0.48 & 0.61 & 0.72 & 0.76 \\
XBI & 0.59 & 0.37 & 0.50 & 0.55 & 1.00 & 0.37 & 0.52 & 0.55 & 0.53 & 0.35 & 0.46 & 0.26 & 0.65 & 0.61 & 0.55 \\
XES & 0.42 & 0.88 & 0.61 & 0.59 & 0.37 & 1.00 & 0.62 & 0.45 & 0.62 & 0.30 & 0.39 & 0.23 & 0.36 & 0.45 & 0.58 \\
XLB & 0.70 & 0.68 & 0.82 & 0.75 & 0.52 & 0.62 & 1.00 & 0.67 & 0.88 & 0.64 & 0.68 & 0.52 & 0.69 & 0.74 & 0.78 \\
FCOM & 0.79 & 0.48 & 0.67 & 0.65 & 0.55 & 0.45 & 0.67 & 1.00 & 0.70 & 0.56 & 0.62 & 0.45 & 0.61 & 0.80 & 0.67 \\
XLI & 0.75 & 0.69 & 0.88 & 0.88 & 0.53 & 0.62 & 0.88 & 0.70 & 1.00 & 0.66 & 0.71 & 0.55 & 0.72 & 0.78 & 0.85 \\
XLP & 0.56 & 0.42 & 0.62 & 0.54 & 0.35 & 0.30 & 0.64 & 0.56 & 0.66 & 1.00 & 0.68 & 0.70 & 0.71 & 0.59 & 0.51 \\
VNQ & 0.59 & 0.49 & 0.67 & 0.66 & 0.46 & 0.39 & 0.68 & 0.62 & 0.71 & 0.68 & 1.00 & 0.73 & 0.64 & 0.66 & 0.63 \\
XLU & 0.43 & 0.36 & 0.49 & 0.48 & 0.26 & 0.23 & 0.52 & 0.45 & 0.55 & 0.70 & 0.73 & 1.00 & 0.55 & 0.43 & 0.38 \\
XLV & 0.67 & 0.48 & 0.69 & 0.61 & 0.65 & 0.36 & 0.69 & 0.61 & 0.72 & 0.71 & 0.64 & 0.55 & 1.00 & 0.65 & 0.58 \\
XLY & 0.84 & 0.50 & 0.73 & 0.72 & 0.61 & 0.45 & 0.74 & 0.80 & 0.78 & 0.59 & 0.66 & 0.43 & 0.65 & 1.00 & 0.77 \\
XTN & 0.67 & 0.60 & 0.78 & 0.76 & 0.55 & 0.58 & 0.78 & 0.67 & 0.85 & 0.51 & 0.63 & 0.38 & 0.58 & 0.77 & 1.00 \\
\bottomrule
\end{tabular}
\end{table}
\newpage
\section{Scalability Study}
\label{app_sca}
Here, the scaling of runtime as we increase the number of payoff scenarios and the number of sectors is shown.
\begin{figure}[ht]
\centering
\includegraphics[width=0.8\linewidth]{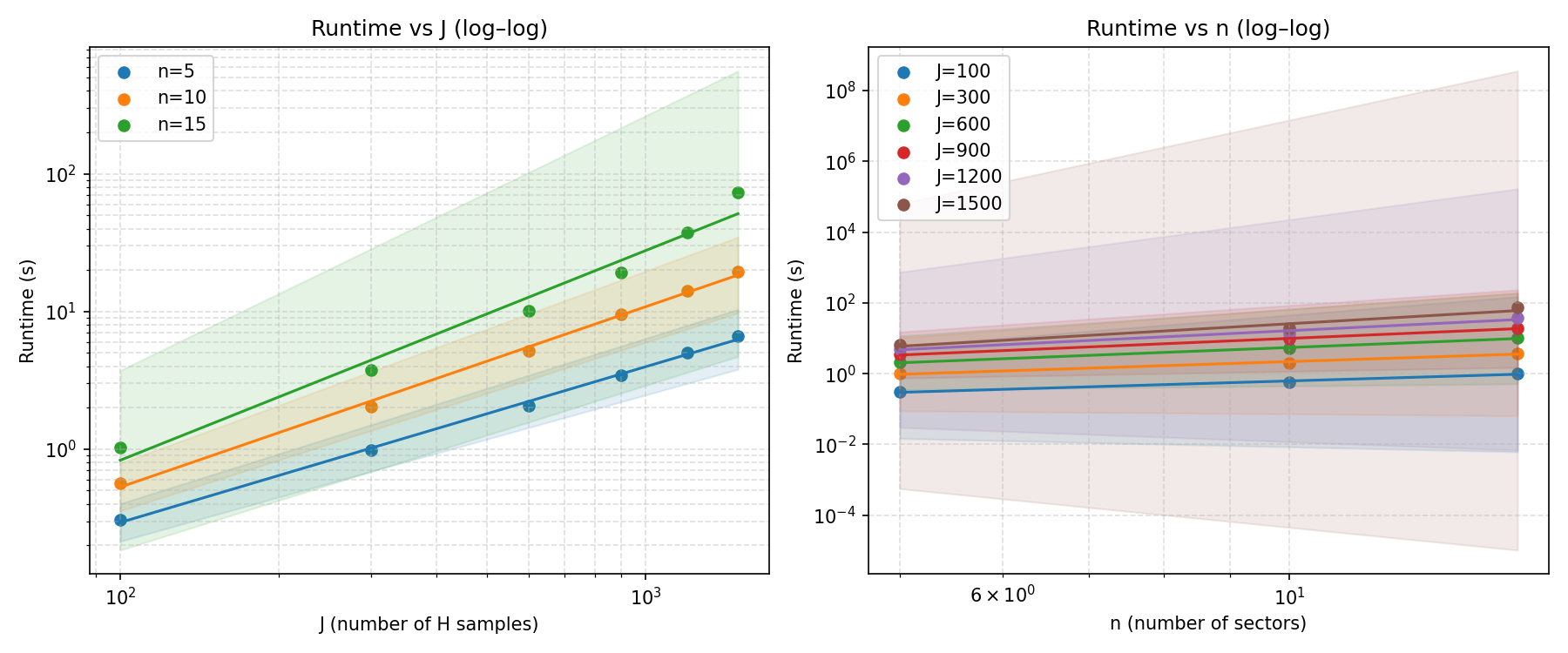}
\caption{Runtime scaling of the convex information design formulations (log--log scale). 
\emph{Left:} Runtime as a function of the number of sampled payoff scenarios $J$, for fixed numbers of sectors $n\in\{5,10,15\}$. 
Scaling exponents (slope of fitted line) range from $1.15$ (blue, $n=5$) to $1.45$ (green, $n=15$), consistent with nearly linear growth in $J$. 
\emph{Right:} Runtime as a function of the number of sectors $n$, for fixed scenario counts $J\in\{100,300,600,900,1200,1500\}$. 
Scaling exponents range between $1.1$ and $1.8$, consistent with polynomial growth in $n$. 
Shaded bands show 95\% confidence intervals from regression fits. 
Overall, the empirical scaling confirms that solve times increase linearly in the number of samples and polynomially in the number of sectors, remaining tractable (on the order of seconds to minutes) even for $n=15$ sectors and $J=1500$ scenarios.}
\label{fig:runtime_scaling}
\end{figure}








\end{document}